\numberwithin{equation}{section}
\pgfplotsset{compat=1.16}
\definecolor{lightgray}{rgb}{0.75,0.75,0.75}
\definecolor{nemoto}{rgb}{0.27,0.75,1}
\definecolor{saki}{rgb}{0.3,0.38,1}
\definecolor{violet}{rgb}{0.58,0,0.83}
\newtheorem{theorem}{Theorem}[section]
\newtheorem{proposition}[theorem]{Proposition}
\newtheorem{corollary}[theorem]{Corollary}
\newtheorem{lemma}[theorem]{Lemma}
\theoremstyle{definition}
\newtheorem*{acknowledgements}{Acknowledgements}
\theoremstyle{remark}
\newtheorem{remark}[theorem]{Remark}
\newcommand{\R}{\mathbb{R}}
\newcommand{\Z}{\mathbb{Z}}
\renewcommand{\S}{\mathbb{S}}
\newcommand{\n}{\mathbf{n}}
\newcommand{\e}{\mathbf{e}}
\DeclareMathOperator{\supp}{supp}
\newcommand{\C}{\mathbb{C}}
\newcommand{\boD}{\mathcal{D}}
\newcommand{\boM}{\mathcal{M}}
\newcommand{\la}{\ensuremath{\lambda}}
\newcommand{\te}{\ensuremath{\theta}}
\renewcommand{\Im}{\mathop{{\rm Im}}\nolimits}
\renewcommand{\Re}{\mathop{{\rm Re}}\nolimits}
\DeclareMathOperator{\sign}{{\rm sign}}
\newcommand{\nor}[2]{\left\| {#1} \right\|_{#2}}		
\newcommand{\ovl}[1]{\overline{#1}}					
\newcommand{\del}{{\delta}}								
\newcommand{\rd}{{\partial}}								
\newcommand{\nab}{{\nabla}}							
\newcommand{\bh}{{\mathbf{h}}}
\newcommand{\bom}{{\mathbf{m}}}
\newcommand{\bn}{{\mathbf{n}}}
\newcommand{\boe}{{\mathbf{e}}}
\newcommand{\bphi}{{\bm{\phi}}}
\title[Chiral magnetic fields of general degree]{Phase transition thresholds and chiral magnetic fields of general degree}
\author{Slim Ibrahim}
\address[S.~Ibrahim]{Department of Mathematics and Statistics, University of Victoria, BC-Canada \& RIMS, Kyoto University, Kyoto-Japan
}
\email{ibrahims@uvic.ca}
\author{Tatsuya Miura}
\address[T.~Miura]{Department of Mathematics, Graduate School of Science, Kyoto University, Kyoto, Japan}
\email{tatsuya.miura@math.kyoto-u.ac.jp}
\author{Carlos Rom\'{a}n}
\address[C.~Rom\'{a}n]{Facultad de Matem\'aticas e Instituto de Ingenier\'ia Matem\'atica y Computacional, Pontificia Universidad Cat\'olica de Chile, Vicu\~na Mackenna 4860, 7820436 Macul, Santiago, Chile}
\email{carlos.roman@uc.cl}
\author{Ikkei Shimizu}
\address[I.~Shimizu]{Department of Mathematics, Graduate School of Science, Kyoto University, Kyoto, Japan}
\email{shimizu.ikkei.8s@kyoto-u.ac.jp}
\date{\today}
\keywords{Landau--Lifshitz energy, Dzyaloshinskii--Moriya interaction, micromagnetics, skyrmion, stability, phase transition}
\subjclass[2020]{35Q60, 82D40, 82B26}
\begin{document}

\begin{abstract}
We study a variational problem for the Landau--Lifshitz energy with Dzyaloshinskii--Moriya interactions arising in 2D micromagnetics, focusing on the Bogomol'nyi regime. We first determine the minimal energy for arbitrary topological degree, thereby revealing two types of phase transitions consistent with physical observations. In addition, we prove the uniqueness of the energy minimizer in degrees $0$ and $-1$, and nonexistence of minimizers for all other degrees. Finally, we show that the homogeneous state remains stable even beyond the threshold at which the skyrmion loses stability, and we uncover a new stability transition driven by the Zeeman energy.
\end{abstract}

\maketitle

\section{Introduction}

In this paper, we study the Landau--Lifshitz energy functional
\begin{equation}\label{eq:E_r}
    E_r[\bn] = D[\bn] + rH[\bn] + V[\bn],
\end{equation}
defined for sphere-valued maps $\bn:\R^2\to\S^2\subset\R^3$, where $r>0$ is a given constant.
Here the three terms of the energy are defined by
\begin{align*}
    D[\bn] &\colonequals \frac12 \int_{\R^2} |\nabla \bn|^2dx,\\
    H[\bn] &\colonequals \int_{\R^2} (\bn-\boe_3)\cdot(\nabla\times\bn)dx, \qquad \nabla\times\bn \colonequals 
    \begin{pmatrix}
        \partial_1\\
        \partial_2\\
        0
    \end{pmatrix}
    \times
        \begin{pmatrix}
        n_1\\
        n_2\\
        n_3
    \end{pmatrix},
    \\
    V[\bn] &\colonequals \frac18 \int_{\R^2} |\bn-\boe_3|^4dx = \frac{1}{2}\int_{\R^2}(1-n_3)^2dx,
\end{align*}
where $\{\boe_j\}_{j=1}^3\subset\R^3$ denotes the canonical basis.
We consider
\[
\boM \colonequals \{\bn:\R^2\to\S^2 \mid D[\bn] + V[\bn] < \infty \},
\]
the natural energy space equipped with the metric
\[
d_{\boM}(\bn,\bom)\colonequals \|\bn-\bom\|_{L^4(\R^2)}+\|\nabla(\bn-\bom)\|_{L^2(\R^2)}.
\]
All three terms in $E_r$ are finite on $\boM$ and depend continuously on the metric $d_{\boM}$, including the helicity term $H$ (see Section \ref{subsec:functional}).
We note that our model also encompasses the more general two-parameter family
\[
D[\bn] + \kappa H[\bn] + \mu V[\bn], \qquad \kappa\neq0,\ \mu > 0.
\]
Indeed, after the transformation $\bn(x)\mapsto\bn(\sign(\kappa)\sqrt{\mu}x)$, the energy reduces to the normalized form above with parameter $r = |\kappa| / \sqrt{\mu}$.

\medskip

The model \eqref{eq:E_r} arises in micromagnetics and describes the variational structure of thin chiral ferromagnetic films.  
The vector field $\bn$ represents the normalized magnetization and satisfies $|\bn|=1$.  
The Dirichlet energy $D$ encodes the Heisenberg exchange interaction, while the helicity term $H$ accounts for the Dzyaloshinskii--Moriya (DM) interaction generated by spin--orbit coupling in noncentrosymmetric crystals.  
The potential term $V$ arises as the critical combination $V = Z - A$ of the Zeeman energy $Z$ (due to an external magnetic field) and the magnetocrystalline anisotropy $A$:
\[
Z[\bn] \colonequals \int_{\R^2} (1 - n_3)\, dx,
\qquad
A[\bn] \colonequals \frac12 \int_{\R^2} (1 - n_3^2)\, dx.
\]

The DM interaction breaks spatial inversion symmetry and allows for the formation of localized, topologically nontrivial spin textures known as \emph{chiral magnetic skyrmions}.  
These are two-dimensional vortex-like solitons stabilized by the competition between exchange, DM, and anisotropy effects.  
Skyrmions were first predicted theoretically in the seminal works \cite{BogYab89,BogHub94,BogHub94vortex,BogHub1999}, 
and have since been experimentally observed in a variety of chiral magnets \cite{Muhlbauer2009,Yu2010,Yu2011,Leonov2016}. See also the review \cite{NagTok2013}. 
Their stability, nanoscale size, and controllability make them central objects of interest in both theoretical and applied micromagnetics \cite{Romming2013,Fert2013,Fert2017}.

\medskip

Since Melcher's pioneering work \cite{MR3269033}, there are a variety of mathematical studies of chiral magnetic skyrmions for the model \eqref{eq:E_r}, possibly with different potentials, see e.g.\ \cite{MR4489502,MR3639614,MR4179069,MR4105397,MR4204508,MR3853081,MR4961803,MR4945828,MR4091507,MR4677585,deng2025conformal,MR4198719,MR4061310,ibrahim2025global} and the references therein.
Our choice of the specific critical potential $V$, which follows \cite{MR3639614,MR4091507,MR4630481}, is motivated by the important factorization property based on the Bogomol'nyi trick (see, e.g., \cite[Proof of Lemma 2]{MR3639614}):
\begin{equation}\label{e1.1}
E_r [\bn] - 4\pi r^2 Q[\bn] = \frac {r^2}2 \int_{\R^2} |\boD_1^{r} \bn + \bn\times \boD^{r}_2 \bn|^2 dx + (1-r^2) D[\bn].
\end{equation}
Here $Q$ denotes the topological degree
\[
Q[\bn]\colonequals\frac{1}{4\pi}\int_{\R^2} \bn\cdot\partial_1\bn\times\partial_2\bn dx \in \Z,
\]
and $\boD_j^{r}\colonequals\partial_j-\frac{1}{r}\boe_j\times\cdot$ is the helical derivative for $j=1,2$.
As $Q$ is well defined and continuous on $\boM$ (see Section \ref{subsec:functional}), we can divide the set $\boM$ into a countable family of connected components as follows:
\begin{align*}
    \boM_k\colonequals\{ \bn\in\boM \mid Q[\bn]=k\},\qquad k\in\Z.
\end{align*}

The case of degree $Q=-1$ is particularly well studied (even with more general potentials) as this class is expected to contain isolated skyrmions.
Let
\begin{equation}\label{eq:hm}
\bh^r(x)\colonequals\bh\left(\frac{x}{2r}\right), \quad
\bh(x_1,x_2) \colonequals
\left(
\frac{-2x_2}{|x|^2 + 1} , \frac{2x_1}{|x|^2 +1} ,  \frac{|x|^2-1}{|x|^2+1}
\right).
\end{equation}
Note that $\bh^r$ is a harmonic map and $\bh^r\in\boM_{-1}$.
In fact, the map $\bh^r$ is a critical point of $E_r$ for all $r>0$.
D\"oring and Melcher \cite{MR3639614} proved that if $0<r\leq1$, then $\bh^r$ is a global minimizer of $E_r$ in $\boM_{-1}$,
\begin{align*}
    \inf_{\bn\in\boM_{-1}}E_r[\bn] = E_r[\bh^r] = 4\pi(1-2r^2).
\end{align*}
In particular, this implies that $\bh^r$ is a local minimizer in $\boM$; this supports that the map $\bh^r$ corresponds to an isolated skyrmion.
On the other hand, the first and last author \cite{MR4630481} showed that if $r>1$, then $\bh^r$ is not a local minimizer (unstable), and moreover $\inf_{\boM_{-1}}E_r=-\infty$.
This result rigorously verifies the phase transition, and how the stability of skyrmions is lost when the external force is weak.

In contrast, the case of general topological degree has been much less studied from a mathematical point of view; apart from Melcher's work \cite[Section 3]{MR3269033}, the authors are only aware of Muratov--Simon--Slastikov's recent work \cite{MR4961803} in a different setting (see Remark \ref{rem:comparison_MSS}).  
A broader consideration of arbitrary degree configurations, however, is meaningful for describing more complex pattern formations observed in the physics literature.

\medskip

In this paper we first determine the exact minimal energy for every topological degree $k\in\Z$, extending previous work for $k=-1$ \cite{MR3639614,MR4630481}.

\begin{theorem}\label{thm:main1_energy_formula}
    Let $k\in\Z$ and $r>0$.
    If $0<r\leq1$, then
    \begin{align*}
       \inf_{\bn\in\boM_k}E_r[\bn]=
       \begin{cases}
           4\pi|k|(1-2r^2) & (k<0),\\
           4\pi k & (k\geq0).
       \end{cases} 
    \end{align*}
    If $r>1$, then $\inf_{\bn\in\boM_k}E_r[\bn]=-\infty$ holds for each $k$.
\end{theorem}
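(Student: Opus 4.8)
The plan is to derive matching lower and upper bounds, treating the regimes $0<r\le1$ and $r>1$ separately.

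For the lower bound when $0<r\le1$, I would read everything off the Bogomol'nyi identity \eqref{e1.1}. Rewriting it as
\[
E_r[\bn]=4\pi r^2 Q[\bn]+\frac{r^2}{2}\int_{\R^2}|\boD_1^{r}\bn+\bn\times\boD_2^{r}\bn|^2\,dx+(1-r^2)D[\bn],
\]
and using that both the squared term and, since $r\le1$, the factor $(1-r^2)$ are nonnegative, together with the classical topological bound $D[\bn]\ge4\pi|Q[\bn]|$, gives for $\bn\in\boM_k$ the estimate $E_r[\bn]\ge 4\pi r^2 k+(1-r^2)\,4\pi|k|$. For $k\ge0$ this equals $4\pi k$, and for $k<0$ it equals $4\pi|k|(1-2r^2)$, which are exactly the claimed values.

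For the upper bound when $0<r\le1$ I would exhibit near-optimal sequences, and here the sign of $k$ dictates very different building blocks. When $k\ge0$, start from a finite-energy degree-$k$ map $\mathbf{g}$ whose Dirichlet energy is close to the topological minimum $4\pi k$ (i.e.\ an approximation of a holomorphic minimizer of $D$ in $\boM_k$), and rescale by $\mathbf{g}_\lambda(x)\colonequals\mathbf{g}(x/\lambda)$. Since $D$ is scale-invariant while $H[\mathbf{g}_\lambda]=\lambda H[\mathbf{g}]$ and $V[\mathbf{g}_\lambda]=\lambda^2 V[\mathbf{g}]$, letting $\lambda\to0$ yields $E_r[\mathbf{g}_\lambda]\to D[\mathbf{g}]$, which can be made arbitrarily close to $4\pi k$. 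When $k<0$, the block is $\bh^r$ itself, with $E_r[\bh^r]=4\pi(1-2r^2)$ and $Q[\bh^r]=-1$; I would glue $|k|$ translated copies, centered at widely separated points and cut off to equal $\boe_3$ outside disjoint disks, so that the degrees add to $-|k|=k$ while the energy tends to $|k|\,E_r[\bh^r]=4\pi|k|(1-2r^2)$. For $r>1$ the same gluing drives the energy to $-\infty$ in every class: now $E_r[\bh^r]=4\pi(1-2r^2)<0$, so $N$ far-separated copies of $\bh^r$ form a degree-$(-N)$ configuration of energy $\approx4\pi N(1-2r^2)$, and adjoining $N+k$ shrunken degree-$(+1)$ holomorphic blocks (each of energy $\approx4\pi$) corrects the degree to $k$ while keeping the total energy $\approx8\pi N(1-r^2)+4\pi k\to-\infty$; alternatively one may invoke \cite{MR4630481} for a divergent sequence in $\boM_{-1}$ and glue a fixed degree-$(k+1)$ map.

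The main obstacle is making the gluing quantitative: I must interpolate each profile to the constant $\boe_3$ on an annulus without creating extra winding, and show that the cutoff and interaction errors in $D$, $H$, and $V$ all vanish as the separation $\to\infty$. This rests on the decay $|\bh^r-\boe_3|=O(|x|^{-1})$, with third component decaying like $O(|x|^{-2})$, and $|\nabla\bh^r|=O(|x|^{-2})$: on an annulus of radius $R$ the interpolation error is of order $R^{-2}$, and because $\bn-\boe_3$ concentrates near each center, the cross terms in the bulk integral $H$ are likewise negligible, so that $H$ is asymptotically additive alongside $D$ and $V$. Degree additivity is exact once the supports are disjoint, whence the energy becomes asymptotically additive and the stated infima follow by combining the two bounds.
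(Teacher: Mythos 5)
Your proposal is correct, and for most of the theorem it coincides with the paper's proof: the lower bound via the Bogomol'nyi identity \eqref{e1.1} combined with $D[\bn]\ge 4\pi|Q[\bn]|$ is exactly the paper's argument, and the upper bound for $k<0$ by gluing $|k|$ compactified, widely separated copies of $\bh^r$ is precisely Proposition \ref{P2} together with Lemma \ref{L:cptHM} (the paper compactifies by cutting off the radial profile $\Theta$ so the supports are genuinely disjoint, which removes all cross terms rather than merely making them small). For $k\ge 0$ the paper uses scaled anti-skyrmions $\tilde\bh^\lambda$, for which $H$ vanishes identically and $V=8\pi\lambda^2\to0$, gluing $k$ of them for $k\ge2$; your single scaled degree-$k$ near-minimizer of $D$ is a harmless variant of the same scaling idea. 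The genuine divergence is the case $r>1$: the paper (Proposition \ref{P6}) builds a configuration that is constant in $x_2$ over a long strip, with profile $\bh^{1/r}_L(x_1,0)$, capped by half-skyrmions and corrected by $|k+1|$ unit vortices; the energy then decreases linearly in the strip length because the one-dimensional energy density integrates to $\int_\R 2(1-r^2)(r^2x_1^2+1)^{-2}dx_1<0$. Your alternative --- $N$ far-separated copies of $\bh^r$ (each of energy $4\pi(1-2r^2)$, valid for all $r>0$ since $\bh^r$ solves the Bogomol'nyi equation) plus $N+k$ shrunken degree-$(+1)$ blocks of energy $\approx4\pi$, giving total energy $\approx 8\pi N(1-r^2)+4\pi k\to-\infty$ at fixed degree $k$ --- is arithmetically sound and reuses the same gluing machinery, so it is arguably more economical; what it does not buy is the paper's structural insight that the energy can be driven to $-\infty$ by stretching in one direction, which is what motivates the interpretation of $r>1$ as the helical phase.
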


\begin{figure}[htbp]
    \begin{center}\footnotesize
    \begin{tikzpicture}[scale=0.8]
        \draw [thick, -stealth](-0.5,0)--(4,0) node [anchor=north east]{$r$}; 
        \draw [thick, -stealth](0,-2.5)--(0,2.5) node [anchor=north east]{$\inf_{\boM_k} E_r$};
        \draw[thick,red] plot[domain=0:2.83, smooth,variable=\t] (\t,{subtract(0.5,0.125*\t*\t)});
        \draw[thick,blue] plot[domain=0:2.83, smooth,variable=\t] (\t,{subtract(1,0.25*\t*\t)});
        \draw[thick,teal] plot[domain=0:2.83, smooth,variable=\t] (\t,{subtract(1.5,0.375*\t*\t)});
        \node[anchor=east,red] at (0,0.5) {$k=-1$};
        \node[anchor=east,blue] at (0,1) {$k=-2$};
        \node[anchor=east,teal] at (0,1.5) {$k=-3$};
        \draw[orange,thick] 
        (2.828,0) -- (0,0) node[anchor=north east] {$k=0$};
        \draw[dashed] (2.83,2.5)--(2.83,-2.5) node[anchor=north] {$r=1$};
        \node[anchor=north east] at (2,0) {$r=\frac 1{\sqrt{2}}$};
    \end{tikzpicture}
    \begin{tikzpicture}[scale=0.8]
        \draw [thick, -stealth](-0.5,0)--(4,0) node [anchor=north east]{$r$}; 
        \draw [thick, -stealth](0,-2.5)--(0,2.5) node [anchor=north east]{$\inf_{\boM_k} E_r$};
        \node[anchor=east,red] at (0,0.5) {$k=1$};
        \node[anchor=east,blue] at (0,1) {$k=2$};
        \node[anchor=east,teal] at (0,1.5) {$k=3$};
        \draw[red,thick] (0,0.5) -- (2.83,0.5);
        \draw[blue,thick] (0,1) -- (2.832,1);
        \draw[teal,thick] (0,1.5) -- (2.834,1.5);
        \draw[dashed] (2.83,2.5)--(2.83,-2.5) node[anchor=north] {$r=1$};
    \end{tikzpicture}
    \end{center}
    \caption{Plots of the minimal energy for degrees $-3\leq k\leq3$.}
    \label{fig:minimal_energy}
\end{figure}
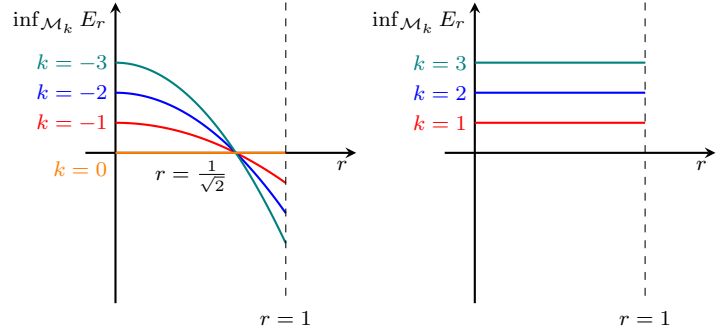

\begin{remark}[Two phase-transition thresholds]
    Theorem \ref{thm:main1_energy_formula} reveals two distinct phase-transition thresholds at $r=1/\sqrt{2}$ and $r=1$, cf.\ Figure \ref{fig:minimal_energy}:
    \begin{itemize}
        \item If $0<r<1/\sqrt{2}$, then 
        \[
        \inf_{k\neq0}\inf_{\boM_k}E_r > \inf_{\boM_0}E_r = \inf_{\boM}E_r=0,
        \]
        where the minimal energy is attained by the \emph{homogeneous state} $\bn\equiv \boe_3$.
        This regime thus corresponds to the \emph{homogeneous phase}.
        In addition, the single skyrmion $\bh^r$ is also a local minimizer, so it may also be regarded as the \emph{isolated skyrmion phase}, where individual skyrmions may coexist while remaining energetically independent.
        \item If $1/\sqrt{2}<r<1$, then the energy is bounded from below for each fixed topological degree $k$, but decreases to $-\infty$ as $k$ decreases,
        \[
        \dots>\inf_{\boM_1}E_r>\inf_{\boM_{0}}E_r>\inf_{\boM_{-1}}E_r>\dots \to \inf_{\boM}E_r = -\infty.
        \]
        We expect that this case corresponds to the \emph{skyrmion lattice phase}, where skyrmions form a periodic array, creating a two-dimensional lattice structure of skyrmions.
        This is consistent with the fact that a single skyrmion remains stable, while increasing their number reduces the total energy.
        \item Finally, if $r>1$, then even for a fixed degree $k\in\Z$, the energy becomes unbounded from below.
        In this regime, one can construct test configurations whose energy diverges to $-\infty$ by stretching in one direction.
        Thus we expect that this case corresponds to the \emph{helical phase}, characterized by one-dimensional helical spin configurations.
    \end{itemize}
    These phase transitions are well known from physical studies of chiral magnets (\cite{PhysRevB.89.094411,Gungordu2016}, see also \cite{BogHub94,BogHub94vortex,BogHub1999}). 
    Our result provides a rigorous verification of these transitions at a qualitative level, though it does not capture yet the detailed lattice or helical structure.
    The transition at $r=1/\sqrt{2}$ was also formally suggested in \cite{Ross2021}, on the basis of the observation that $E_r[\bh^r]$ becomes lower than $E_r[\boe_3]$ at this value.
\end{remark}

Next we address the existence and uniqueness of minimizers in a fixed-degree class $\boM_k$.
Although our framework allows for an explicit variational factorization and leads to exact energy formulae, it has certain subtleties concerning the existence of higher-degree minimizers.
By Theorem \ref{thm:main1_energy_formula}, clearly there is no minimizer if $r>1$, so we only analyze $0<r\leq1$.
Recall that there are minimizers for degree $k=0$ and $k=-1$, namely, the homogeneous state $\boe_3$ and the single skyrmion $\bh^r$, respectively.
Here we establish the rigidity theorem that they are the only possible minimizers (up to invariances), except at the endpoint case $r=1$.

\begin{theorem}\label{thm:main4_rigidity}
    Let $0<r<1$.
    Then the unique minimizer of $E_r$ in $\boM_0$ is $\boe_3$, and the unique minimizer of $E_r$ in $\boM_{-1}$ is $\bh^r$ up to translation; that is, $\bn(x)=\bh^r(x-x_0)$ for some $x_0\in\R^2$.
    Moreover, for any $k\in\Z\setminus\{0,-1\}$, the energy $E_r$ admits no minimizer in $\boM_k$.
\end{theorem}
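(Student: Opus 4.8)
The plan is to run the factorization \eqref{e1.1} in the regime $0<r<1$, where the coefficient $1-r^2$ is strictly positive. On $\boM_k$ the Belavin--Polyakov inequality $D[\bn]\ge 4\pi|Q[\bn]|$ holds, with equality exactly when $\bn$ is holomorphic (for $k>0$) or anti-holomorphic (for $k<0$). Feeding this into \eqref{e1.1} reproduces the sharp lower bound of Theorem~\ref{thm:main1_energy_formula}, and the point is that equality is possible only if \emph{both} nonnegative terms vanish: any minimizer must satisfy the helical self-duality equation $\boD_1^r\bn+\bn\times\boD_2^r\bn=0$ a.e.\ \emph{and} saturate $D[\bn]=4\pi|k|$, i.e.\ be $\pm$-holomorphic. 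So the first step is to record this rigidity dichotomy. Expanding the self-duality equation using $\bn\times(\boe_2\times\bn)=\boe_2-n_2\bn$ gives the convenient form $\partial_1\bn+\bn\times\partial_2\bn-\tfrac1r(\boe_1\times\bn+\boe_2-n_2\bn)=0$.

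The case $k=0$ is immediate: $D[\bn]=0$ forces $\bn$ constant, and finiteness of $V[\bn]$ forces $\bn\equiv\boe_3$. For $k\ge 1$ the argument is purely algebraic. Here $\bn$ is holomorphic, so $\partial_1\bn+\bn\times\partial_2\bn=0$, and substituting this into the expanded self-duality equation leaves the pointwise constraint $\boe_1\times\bn+\boe_2-n_2\bn=0$. Reading off its three components yields $n_1n_2=0$, $1-n_3=n_2^2$, and $n_2(1-n_3)=0$, which together force $\bn\equiv\boe_3$ a.e.; as this has degree $0$, no minimizer can exist in $\boM_k$ for $k\ge1$. No regularity input is needed in this direction, since the constraint collapses algebraically.

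The heart of the matter is $k\le -1$, where the analogous substitution (using anti-holomorphicity $\partial_1\bn-\bn\times\partial_2\bn=0$) only yields the genuine first-order system $2\partial_1\bn=\tfrac1r(\boe_1\times\bn+\boe_2-n_2\bn)$, which must be solved. I would pass to stereographic coordinates $u\colonequals(n_1+in_2)/(1+n_3)$ with $z\colonequals x_1+ix_2$. Since $\bn$ saturates the Dirichlet bound it is a finite-energy (weakly, hence by elliptic regularity smoothly) anti-holomorphic map with $\bn\to\boe_3$ at infinity, so it extends across infinity and $u=P(\bar z)$ for a rational $P$ of degree $|k|$ with $P\to0$ at infinity. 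Transcribing the self-duality equation via a direct computation of $\partial_1(n_1+in_2)$ turns it into
\begin{equation*}
\frac{P'(\bar z)}{P(\bar z)^2}=\overline{P}'(z)+\frac{i}{2r}\bigl(1+\overline{P}(z)^2\bigr),
\end{equation*}
where $\overline{P}$ is $P$ with conjugated coefficients. Treating $z$ and $\bar z$ as independent variables (legitimate by analytic continuation, both sides being rational), the left side depends only on $\bar z$ and the right only on $z$, so both equal a common constant $C$. Integrating $P'/P^2=C$ gives $P(w)=1/(c_0-Cw)$, a Möbius map of degree exactly $1$; hence $|k|=1$, which rules out every $k\le -2$. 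For $k=-1$, substituting $P$ back into the right-hand equation forces $C=i/(2r)$ while leaving $c_0$ free, so $u=2ir/(\bar z-\bar z_0)$, i.e.\ exactly the translates $\bn(x)=\bh^r(x-x_0)$.

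I expect the main obstacle to be the regularity-and-classification input in the case $k\le-1$: upgrading an a priori merely finite-energy minimizer to a smooth harmonic map represented by a rational function, so that the separation-of-variables step is valid, and justifying the removable-singularity/compactification at infinity. The complex-coordinate reduction and the polarization argument are then elementary, but the passage from weak to classical solutions, together with the bookkeeping that pins the free parameter $c_0$ to translations (and confirms that \emph{no} additional scaling or rotational symmetry survives the self-duality equation), is where the real work will lie.
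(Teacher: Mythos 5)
Your proposal is correct and follows essentially the same route as the paper: the same two necessary conditions (the Bogomol'nyi equation together with saturation of $D[\bn]=4\pi|Q[\bn]|$, recorded in the paper as Lemma \ref{lem:minimizer_necessary}), the same algebraic collapse of the self-duality equation for $k\ge0$ (the paper packages the vector $\boe_1\times\bn+\bn\times(\boe_2\times\bn)=(-n_1n_2,\,1-n_3-n_2^2,\,n_2(1-n_3))$ into the integrated identity $D-4\pi Q=r^{-2}V$ of Lemma \ref{L9}, whereas you read off its pointwise vanishing), and the same complex-analytic classification for $k\le-1$ via stereographic coordinates following \cite{MR4091507}. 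The only cosmetic difference in the negative-degree case is that the paper works with $v=1/u$, for which the Bogomol'nyi equation linearizes to $\partial_{\bar z}v=-i/2r$ so that $v=-\tfrac{i}{2r}\bar z+f(z)$ and anti-holomorphicity immediately forces $f$ to be constant; this is slightly cleaner than your separation-of-variables argument and sidesteps the full rational classification of finite-energy harmonic maps, needing only the local regularity input you correctly flag.
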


This result suggests that, in order to investigate higher-degree configurations for this model, one needs to keep track of the precise quantitative behavior of minimizing sequences.
See also \cite{MR4732301,rupflin2023sharp} for quantitative rigidity results for harmonic maps of general degree.

The endpoint case $r=1$ is indeed a borderline case, allowing a nontrivial family of minimizers, see Appendix \ref{sec:couterexample_endpoint} for details.

\begin{remark}\label{rem:comparison_MSS}
Recently, Muratov--Simon--Slastikov \cite{MR4961803} established an abstract existence theorem for higher-degree minimizers for a closely related micromagnetic model.  
In their setting (translated into our conventions), the energy is defined on a bounded Lipschitz domain $\Omega \subset \R^2$ with Dirichlet boundary condition $\bn = \e_3$, and the potential $V$ is replaced by the purely anisotropic term $\mu A$ with $\mu \ge 0$.  
The DM interaction appears in a different form, but it is mathematically equivalent to ours up to a rotation and the addition of a divergence term.  
Within this framework, they proved the existence of minimizers for every degree $Q \le -1$ under explicit quantitative assumptions, requiring that $r$ is small and the domain $\Omega$ is large.
\end{remark}

Finally, we investigate the stability of critical points.  
The stability threshold of the skyrmion $\bn = \bh^r$ with $Q = -1$ has been previously established in \cite{MR4630481}.  
To the authors' knowledge, the only other known critical point of $E_r$ is the homogeneous state $\bn = \boe_3$ with $Q = 0$, whose stability is analyzed in our work.  
Remarkably, we find that the homogeneous state is strictly $L^2$-stable for all $r > 0$, even beyond the skyrmion stability threshold at $r = 1$.

\begin{theorem}\label{thm:main2_stability}
    Let $r>0$.  
    There exists $\delta=\delta(r)>0$ such that, for all $\bn\in \boM$ satisfying $0<\|\bn-\boe_3\|_{L^2(\R^2)}\leq \delta$, we have
    \[
        E_{r}[\bn] > E_{r}[\boe_3].
    \]
\end{theorem}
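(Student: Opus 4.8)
The plan is to show that $\bn\mapsto E_r[\bn]-E_r[\boe_3]$ is strictly positive in a punctured $L^2$-neighborhood of $\boe_3$. First observe that $D[\boe_3]=H[\boe_3]=V[\boe_3]=0$, so $E_r[\boe_3]=0$ and the claim reduces to $E_r[\bn]>0$. Since $D$ and $V$ are nonnegative, the only sign-indefinite contribution is the helicity $H$, and the entire argument rests on the single estimate
\[
|H[\bn]| \le C\,\|\bn-\boe_3\|_{L^2(\R^2)}\,D[\bn],
\]
with $C$ a universal constant. Granting this, any $\bn$ with $\|\bn-\boe_3\|_{L^2}\le\delta$ satisfies $E_r[\bn] \ge D[\bn]\bigl(1 - Cr\delta\bigr) + V[\bn]$, which is positive once $\delta < 1/(Cr)$; this is exactly why the conclusion survives for every $r>0$, at the cost of shrinking $\delta$ like $1/r$.

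To reach the displayed estimate I would proceed in two steps, the guiding idea being that $H$ vanishes to higher order than $D$ along the sphere constraint near $\boe_3$. An integration by parts (justified by the analysis of $H$ on $\boM$ carried out in Section \ref{subsec:functional}) recasts the helicity as
\[
H[\bn] = 2\int_{\R^2}(1-n_3)\,(\partial_2 n_1 - \partial_1 n_2)\,dx .
\]
Cauchy--Schwarz together with $\|1-n_3\|_{L^2}^2 = 2V[\bn]$ and $\|\partial_2 n_1 - \partial_1 n_2\|_{L^2}^2 \le 4D[\bn]$ then gives the intermediate bound $|H[\bn]| \le 4\sqrt{2\,V[\bn]\,D[\bn]}$. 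This representation is essential: bounding the original integrand by $|\bn-\boe_3|=\sqrt{2(1-n_3)}$ would only expose a factor $\sqrt{1-n_3}$ and lose a power of the perturbation. The intermediate bound already yields positivity, but only for $r$ below $1/(2\sqrt2)$, since the cross term is too large for general $r$.

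The decisive second step upgrades $\sqrt V$ to an order-$\delta$ quantity via the two-dimensional Gagliardo--Nirenberg (Ladyzhenskaya) inequality $\|f\|_{L^4(\R^2)}^2 \le C\|f\|_{L^2(\R^2)}\|\nabla f\|_{L^2(\R^2)}$ applied to $f=\bn-\boe_3\in H^1(\R^2)$. Since $V[\bn]=\tfrac18\|\bn-\boe_3\|_{L^4}^4$ and $\|\nabla\bn\|_{L^2}^2=2D[\bn]$, this produces $\sqrt{V[\bn]} \le C\,\|\bn-\boe_3\|_{L^2}\sqrt{D[\bn]}$; substituting into the intermediate bound gives the desired cubic estimate $|H[\bn]| \le C\|\bn-\boe_3\|_{L^2}D[\bn]$. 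For strictness I would note that $D[\bn]>0$ whenever $\bn\ne\boe_3$ lies in the ball: a map with $D[\bn]=0$ is a.e.\ constant, and a constant distinct from $\boe_3$ has either infinite $V$ (hence is not in $\boM$) or infinite $L^2$-distance to $\boe_3$. Thus $D[\bn]>0$, and choosing $\delta$ of order $1/r$ yields $E_r[\bn]\ge\tfrac12 D[\bn]>0$.

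I expect the main obstacle to be precisely this uniformity in $r$: the naive Cauchy--Schwarz bound $|H|\lesssim\sqrt{VD}$ controls the helicity only in the perturbative window where $D$ dominates comparably, and it breaks down for large $r$. Recognizing that $H$ is genuinely cubic near $\boe_3$ — which forces the combination of the integrated-by-parts representation (to expose the full factor $1-n_3$ rather than $\sqrt{1-n_3}$) with the critical two-dimensional Sobolev inequality (to convert $\sqrt V$ into $\|\bn-\boe_3\|_{L^2}\sqrt D$) — is the heart of the argument, and the step most in need of care is verifying the integration by parts and the applicability of these inequalities for general $\bn\in\boM$ rather than for smooth, compactly supported maps.
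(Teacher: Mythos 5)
Your proposal is correct and follows essentially the same route as the paper: the paper likewise uses the integrated-by-parts form of $H$, the constraint identity $1-n_3=\tfrac12|\bn-\boe_3|^2$, Cauchy--Schwarz, and Ladyzhenskaya's inequality to absorb the helicity into $\tfrac12$ of the Dirichlet term for $\|\bn-\boe_3\|_{L^2}\lesssim 1/r$. The only cosmetic difference is that the paper retains the $\tfrac18\|\bn-\boe_3\|_{L^4}^4$ term to get strict positivity, whereas you argue $D[\bn]>0$ directly; both are valid.
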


Our proof strategy is entirely different from that for the skyrmion, which relies on a second-variation analysis \cite{MR4630481}.  
Here we directly estimate the total energy to deduce the strict local minimality.

While the $L^2$-stability in Theorem \ref{thm:main2_stability} might be physically acceptable, it is also mathematically natural to ask for stability with respect to the canonical metric $d_\boM$.
However, this problem is subtle and remains open.

\medskip

To clarify the subtlety of the stability issue for the homogeneous state, we further add a perturbation by the Zeeman energy 
\begin{equation}\label{eq:E_rh}
    E_{r,h}\colonequals D+rH+V+hZ \qquad (h\in\R).
\end{equation}
That is, the potential $V=Z-A$ is perturbed to $V_h=(1+h)Z-A$. We show that our critical coupling $h=0$ is indeed a borderline case. 

A trivial difference is that, if $h\neq0$, then the energy space becomes
\begin{equation}\label{eq:space_M'}
    \boM'\colonequals\{\bn:\R^2\to\S^2 \mid D[\bn]+Z[\bn]<\infty \}\subset\boM,
\end{equation}
on which both $H[\bn]$ and $V[\bn]$ are still well defined (see Section~\ref{subsec:functional}), equipped with the metric
\[
d_{\boM'}(\bn,\bom)\colonequals\|\bn-\bom\|_{L^2(\R^2)}+\|\nabla(\bn-\bom)\|_{L^2(\R^2)}.
\]
Notice that $d_{\boM'}$ is stronger than $d_\boM$ 
due to Ladyzhenskaya's inequality
\begin{equation}\label{eq:Ladyzhenskaya}
    \|f\|_{L^4(\R^2)}\leq C\|f\|_{L^2(\R^2)}^{1/2}\|\nabla f\|_{L^2(\R^2)}^{1/2},
\end{equation}
which is a special case of the 2D Gagliardo--Nirenberg interpolation inequality.

More importantly, we discover that the stability changes at $h=0$ for the homogeneous state (as corollaries of slightly stronger stability and instability results; see Theorem~\ref{thm:h>0} and Theorem~\ref{thm:h<0} for details).

\begin{theorem}\label{thm:main3_staiblity_transition}
    Let $r>0$ and $h\neq0$.
    If $h>0$, then $\boe_3$ is a strict local minimizer of $E_{r,h}$ in $\boM'$.
    If $h<0$, then $\boe_3$ is not a local minimizer of $E_{r,h}$ in $\boM'$.
\end{theorem}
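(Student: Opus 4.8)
The plan is to prove the two directions by completely different mechanisms: for $h>0$ the strict local minimality will be deduced from the already-established $L^2$-stability of $\boe_3$, while for $h<0$ I will construct explicit destabilizing perturbations that exploit the absence of a spectral gap for the Dirichlet energy on $\R^2$.

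For the case $h>0$, first note the algebraic identity $E_{r,h}=E_r+hZ$ together with $Z[\bn]=\int_{\R^2}(1-n_3)\,dx\ge 0$, so that $E_{r,h}[\bn]\ge E_r[\bn]$ whenever $h>0$. Since $d_{\boM'}$ dominates the $L^2$-distance, a $d_{\boM'}$-ball about $\boe_3$ is contained in the corresponding $L^2$-ball. I would take $\delta=\delta(r)>0$ from Theorem~\ref{thm:main2_stability}: for $\bn\in\boM'$ with $0<d_{\boM'}(\bn,\boe_3)\le\delta$ we have $0<\|\bn-\boe_3\|_{L^2(\R^2)}\le\delta$, hence $E_r[\bn]>E_r[\boe_3]=0$ by that theorem, and therefore $E_{r,h}[\bn]\ge E_r[\bn]>0=E_{r,h}[\boe_3]$. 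This establishes that $\boe_3$ is a strict local minimizer of $E_{r,h}$ in $\boM'$.

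For the case $h<0$ I would parametrize maps near $\boe_3$ by their horizontal components. Fix a nonzero profile $\psi\in C_c^\infty(\R^2;\R^2)$ and, for small $t>0$, set $\bn_t\colonequals(t\psi,\sqrt{1-t^2|\psi|^2})$, which is a valid element of $\boM'$ (the defect $1-n_3$ is bounded and compactly supported) with $d_{\boM'}(\bn_t,\boe_3)\to0$ as $t\to0$. Expanding each term in $t$ over the fixed support of $\psi$, the Dirichlet energy is $\tfrac{t^2}{2}\int_{\R^2}|\nabla\psi|^2\,dx+O(t^4)$ (the $|\nabla n_3|^2$ contribution is $O(t^4)$), the Zeeman and potential terms contribute $\tfrac{ht^2}{2}\int_{\R^2}|\psi|^2\,dx+O(t^4)$ and $O(t^4)$, and the DM term is only $O(t^3)$: its in-plane part pairs the $O(t)$ horizontal components against $\nabla n_3=O(t^2)$, while its third component pairs $n_3-1=O(t^2)$ against first derivatives of order $O(t)$, so every contribution is at least cubic (equivalently, the would-be linear term $t\int_{\R^2}(\partial_1\psi_2-\partial_2\psi_1)\,dx$ vanishes for compactly supported $\psi$). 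Hence
\[
E_{r,h}[\bn_t]=t^2\Big(\tfrac12\int_{\R^2}|\nabla\psi|^2\,dx+\tfrac{h}{2}\int_{\R^2}|\psi|^2\,dx\Big)+O(t^3).
\]
It remains to make the quadratic coefficient negative. Taking $\psi=\psi_0(\cdot/\lambda)$ for a fixed nonzero $\psi_0$, scale invariance of the two-dimensional Dirichlet energy gives $\int_{\R^2}|\nabla\psi|^2\,dx=\int_{\R^2}|\nabla\psi_0|^2\,dx$ while $\int_{\R^2}|\psi|^2\,dx=\lambda^2\int_{\R^2}|\psi_0|^2\,dx$, so the coefficient equals $\tfrac12\int_{\R^2}|\nabla\psi_0|^2\,dx-\tfrac{|h|}{2}\lambda^2\int_{\R^2}|\psi_0|^2\,dx<0$ for $\lambda$ large. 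Fixing such a $\psi$, we obtain $E_{r,h}[\bn_t]<0=E_{r,h}[\boe_3]$ for all small $t>0$, with $\bn_t$ arbitrarily $d_{\boM'}$-close to $\boe_3$, so $\boe_3$ is not a local minimizer.

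For $h>0$ there is essentially no obstacle beyond recognizing the monotonicity $E_{r,h}\ge E_r$ and invoking Theorem~\ref{thm:main2_stability}. The substantive part is the $h<0$ construction, and the point to get right is that the sign of the energy is decided at quadratic order: one must check that the linear term vanishes and that the quadratic form $\tfrac12\int|\nabla\psi|^2+\tfrac h2\int|\psi|^2$ is genuinely indefinite, which is precisely where the non-compactness of $\R^2$ (failure of a Poincaré inequality, i.e.\ scale invariance of the 2D Dirichlet energy) enters. The cubic DM correction is then harmless, since it is dominated by the negative quadratic term once the profile scale $\lambda$ is fixed and the amplitude $t$ is sent to zero.
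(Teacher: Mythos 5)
Your proof is correct, and the two halves relate to the paper's argument differently. For $h<0$ you do essentially what the paper does (Theorem~\ref{thm:h<0}): a purely horizontal, compactly supported profile, a quadratic expansion in the amplitude $t$ in which the DM term is only $O(t^3)$ and the potential $V$ only $O(t^4)$, and the dilation $\psi_0(\cdot/\lambda)$ exploiting the scale invariance of the 2D Dirichlet energy to make $\tfrac12\int|\nabla\psi|^2+\tfrac h2\int|\psi|^2$ negative; your normalization $\bn_t=(t\psi,\sqrt{1-t^2|\psi|^2})$ versus the paper's $(\boe_3+t\bphi)/|\boe_3+t\bphi|$ is immaterial. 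For $h>0$, however, you take a genuinely different and shorter route: since $Z\ge 0$, the monotonicity $E_{r,h}\ge E_r$ reduces the claim to the critical case, and because $d_{\boM'}$ controls the $L^2$-distance, Theorem~\ref{thm:main2_stability} immediately gives $E_{r,h}[\bn]\ge E_r[\bn]>0$ on a punctured $d_{\boM'}$-ball. This is a valid proof of the statement as written. What it does not recover is the paper's stronger intermediate result (Theorem~\ref{thm:h>0}): by estimating the DM term as $Cr\|\bphi\|_{L^4}\|\bphi\|_{H^1}^2$ and using the coercivity supplied by $\tfrac h2\int|\bphi|^2$, the paper obtains strict minimality under mere $L^4$-smallness of the perturbation together with a quantitative lower bound $\tfrac{\min\{1,h\}}{4}\|\bn-\boe_3\|_{H^1}^2+\tfrac18\|\bn-\boe_3\|_{L^4}^4$ on the energy gap, which is sharper than what your reduction yields. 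So your argument buys brevity and a clean logical dependence on the $h=0$ case, while the paper's buys a stronger stability statement; both prove Theorem~\ref{thm:main3_staiblity_transition}.
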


This paper is organized as follows: In Section \ref{sec:prelim} we recall some fundamental facts for our energy functionals and function spaces.
We then prove Theorem \ref{thm:main1_energy_formula} in Section \ref{sec:minimal_energy}, and Theorem \ref{thm:main4_rigidity} in Section \ref{sec:rigidity}.
Finally, in Section \ref{sec:stability} we prove Theorems \ref{thm:main2_stability} and \ref{thm:main3_staiblity_transition}.
The paper is complemented by Appendix \ref{sec:couterexample_endpoint} where we discuss details of counterexamples to Theorem \ref{thm:main4_rigidity} in the endpoint case $r=1$.


\begin{acknowledgements}
    SI acknowledges financial support from the Natural Sciences and Engineering Research Council of Canada (NSERC) under Grant No. 371637-2025. Part of this work was carried out during an extended visit to the Research Institute for Mathematical Sciences (RIMS). SI is grateful to colleagues and the staff at RIMS for their warm hospitality and stimulating research environment.
    TM is supported by JSPS KAKENHI Grant Numbers JP23H00085, JP23K20802, and JP24K00532.
    CR is supported by ANID FONDECYT 1231593. He wishes to thank the kind support and hospitality of the Tokyo Institute of Technology (currently the Institute of Science Tokyo) during the completion of part of this work.
    IS is supported by JSPS KAKENHI Grant Number JP23KJ1416.
\end{acknowledgements}


\section{Preliminaries}\label{sec:prelim}

\subsection{Properties of the functionals}\label{subsec:functional}
We first recall a few important properties of the 
functionals that we use.

First, recall that $H$ is well defined on $\boM$ in view of the inequality
\begin{equation}\label{eq:1124_01}
|(\bn-\boe_3)\cdot\nabla\times\bn| \leq C |\bn-\boe_3|^2|\nabla\bn| \in L^1(\R^2),
\end{equation}
which follows from $|(\bn-\boe_3)\cdot\nabla\times\bn|\leq c (1-n_3)|\nabla\bn|$, obtained by D\"oring--Melcher in \cite[Equation (8)]{MR3639614}, together with the simple identity for sphere-valued maps
\begin{equation}\label{eq:1124_02}
    2(1-n_3)=|\bn-\boe_3|^2.
\end{equation}
Then an integration by parts, whose validity is also ensured by D\"oring--Melcher \cite[Section 1.2]{MR3639614}, yields
\begin{align}
H[\bn]
&= 
\int_{\R^2} (n_1 \rd_2 n_3 - n_2\rd_1n_3) dx + \int_{\R^2} (n_3-1) (\rd_1n_2 -\rd_2n_1)dx \notag\\
&= 2 \int_{\R^2} (n_3-1) (\rd_1n_2-\rd_2n_1) dx. \label{eq:H_int_by_parts}
\end{align}
The last expression together with \eqref{eq:1124_02} further implies the continuity of $H$, and thus of $E_r$, on $\boM$.
By $\boM'\subset\boM$ and Ladyzhenskaya's inequality  \eqref{eq:Ladyzhenskaya}, each term of $E_{r,h}$ in \eqref{eq:E_rh} is also well defined and continuous on $\boM'$.

Next we discuss the topological degree $Q$ 
(cf.\ \cite{MR728866}).
It is easy to see that $Q$ is well defined  on $\boM \subset  L^\infty\cap \dot{H}^1$.
Moreover $Q$ is continuous on $\boM$, since any convergent sequence in $\boM$ has a subsequence that converges a.e.\ in $\R^2$, while if a sequence of maps $\{\bn_j\}_{j=1}^\infty\subset \dot{H}^1(\R^2;\S^2)$ satisfies
\begin{align*}
\bn_j
\to \bn \text{ a.e.\ in }\R^2,\qquad
\nab \bn_j \to \nab \bn \text{ in } L^2(\R^2)\quad 
\text{as } j\to\infty
\end{align*}
for some $\bn \in \dot{H}^1(\R^2;\S^2)$,
then $Q[\bn_j] \to Q[\bn]$ as $j\to\infty$. 
Indeed, triangle inequalities yield
\begin{align*}
&|Q[\bn_j] - Q[\bn]|\\
& \quad \leq
\left|\int_{\R^2} \bn_j \cdot\rd_1 \bn_j \times \rd_2(\bn_j-\bn) dx \right|
+
\left|
\int_{\R^2} \bn_j \cdot\rd_1 (\bn_j -\bn )\times \rd_2 \bn dx\right| \\
&\quad\qquad +
\left|
\int_{\R^2} (\bn_j-\bn) \cdot \rd_1 \bn\times \rd_2 \bn dx\right|.
\end{align*}
The first two terms are bounded by
$$
\nor{\rd_1 \bn_j}{L^2} \nor{\rd_2 (\bn_j-\bn)}{L^2} + \nor{\rd_2 \bn}{L^2} \nor{\rd_1 (\bn_j-\bn)}{L^2}
 \xrightarrow{j\to\infty} 0.
$$
The last term also converges to zero by the dominated convergence theorem (since $|\bn_j-\bn|\leq2$).
Finally, this continuity of $Q$ also implies that $Q[\bn]\in \Z$ for $\bn\in\boM$, as it is known that the set of maps $\bn:\R^2\to\S^2$ with $\bn-\boe_3\in C_c^\infty (\R^2;\R^3)$ is dense in $\boM$ (see the next paragraph) and all such maps satisfy $Q[\bn]\in\Z$ (see, e.g., \cite[Section 1.4]{MR488102}). 
Thus $Q$ is well defined and continuous on $\boM$, and hence also on $\boM'$.

Finally, for the reader's convenience, we briefly review how to derive the known density property
\begin{equation}\label{eq:smooth_density}
    \overline{\{ \bn: \R^2\to\S^2 \mid \bn-\boe_3 \in C_c^\infty(\R^2;\R^3)  \}}^{d_\boM} = \boM.
\end{equation}
We first use, for example, D\"oring--Melcher's radial cut-off argument \cite[Appendix 1]{MR3639614} to show that the set of (possibly discontinuous) maps 
\[
\boM_c\colonequals \{ \bn\in\boM \mid \text{$\bn-\boe_3$ has compact (measure-theory) support} \}
\]
is dense in $\boM$.
Then we can further show that $\boM_c\cap C(\R^2;\S^2)$ is also dense in $\boM$.
Indeed, using the averaging map $\bar\bn_\varepsilon(x)\colonequals \fint_{B_\varepsilon(x)}\bn(y)dy$ for $\bn\in\boM_c$, which is continuous but may not be sphere-valued, we can define
\[
\bn_\varepsilon\colonequals \frac{\bar\bn_\varepsilon}{|\bar\bn_\varepsilon|} \in \boM_c\cap C(\R^2;\S^2) \quad\text{for all $0<\varepsilon\ll1$}
\]
such that $\bn_\varepsilon\to \bn$ in $\boM$ as $\varepsilon\to0$.
Note that we may view $\bn_\varepsilon=\bn*\bar{\rho}_\varepsilon$ for $\bar{\rho}_\varepsilon\colonequals \frac1{|B_\varepsilon(0)|}\chi_{B_\varepsilon(0)}$.
A key, nontrivial observation (going back to Schoen--Uhlenbeck \cite[Proposition 4]{MR710054}; see also \cite[Lemma A.1]{MR728866}) is that
\begin{equation}\label{eq:norm_uniform_convergence}
    \|1-|\tilde{\bn}_\varepsilon|\|_{L^\infty} \to 0
\end{equation}
holds as $\varepsilon\to0$ (although $\tilde{\bn}_\varepsilon$ may not uniformly converge), which guarantees that the division by $|\tilde{\bn}_\varepsilon|$ causes no issue in the regularity nor convergence.
Convergence \eqref{eq:norm_uniform_convergence} follows since Poincar\'e's inequality
\[
\int_{B_{\varepsilon}(x)} |\bn(y)-\bar{\bn}_\varepsilon(x)|dy \leq C\varepsilon\int_{B_{\varepsilon}(x)}|\nabla\bn(y)|dy
\]
and the Cauchy--Schwarz inequality yield
\begin{align*}
    |1-|\bar{\bn}_\varepsilon(x)|| &\leq \fint_{B_{\varepsilon}(x)} |\bn(y)-\bar{\bn}_\varepsilon(x)|dy \leq C \Big( \int_{B_\varepsilon(x)}|\nabla\bn(y)|^2dy \Big).
\end{align*}
Now the density in \eqref{eq:smooth_density} follows easily since any map $\bn\in\boM_c\cap C(\R^2;\S^2)$ can be approximated by smooth maps $\bn_\varepsilon\colonequals \frac{\bn*\rho_\varepsilon}{|\bn*\rho_\varepsilon|}\in \boM_c\cap C^\infty(\R^2;\S^2)$ with the standard mollifier $\rho_\varepsilon$ so that $d_\boM(\bn_\varepsilon,\bn)\to0$ and $\|\bn_\varepsilon-\bn\|_\infty\to0$.

We also remark that this density property can be understood more broadly through the framework of VMO regularity, cf.\ \cite[Theorem 2.1]{MR2376670}.

\subsection{Equivariance}
We say that a map $\bn:\R^2\to\S^2\subset\R^3$ is \emph{equivariant} if $\bn(R_\theta x)=\hat{R}_{m\theta}\bn(x)$ holds for some $m\in\Z$ and any $\theta\in\R$ and $x\in\R^2$, where $R_\theta$ (resp.\ $\hat{R}_\theta$) denotes the rotation in $\R^2$ (resp.\ horizontal rotation in $\R^3$).

In terms of the polar coordinates $(\rho,\psi)$ of $\R^2$, an equivariant map $\bn$ is of the form
\begin{align*}
\bn(\rho,\psi) = 
\begin{pmatrix}
\cos\Phi(\psi) \sin \Theta (\rho)\\
\sin\Phi(\psi) \sin \Theta (\rho)\\
\cos \Theta (\rho)
\end{pmatrix}
\end{align*}
for some $\Phi(\psi)=m\psi+\psi_0$ with $m\in\Z$ and $\psi_0\in\R$, and 
$\Theta:[0,\infty) \to \R$.
Specifically in this paper, two cases of $\Phi$ appear: The case $\Phi=
\psi+\frac \pi2$,
which represents skyrmion configuration of the Bloch type, and the case $\Phi=
\pi-\psi$, which stands for anti-skyrmion (see Figure \ref{fig:skyrmion_antiskyrmion}).

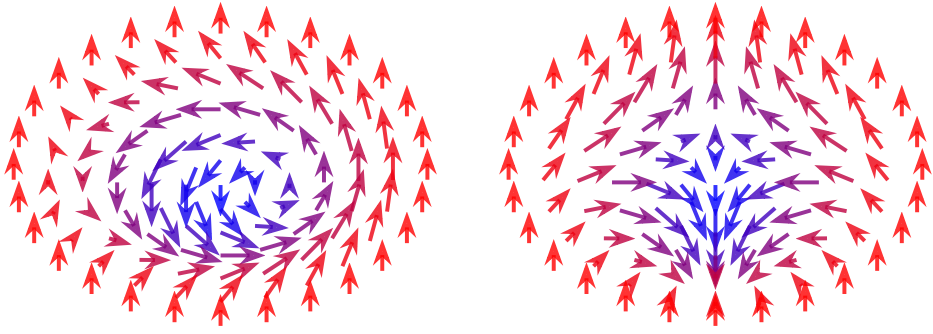
\begin{figure}[htbp]
  \centering

  \def\Rmax{5.5}
  \def\arrowlength{1.2}
  \def\el{45}  
  \def\az{0}   

  \begin{minipage}[t]{0.48\textwidth}
    \centering
    \tdplotsetmaincoords{\el}{\az}
    \begin{tikzpicture}[tdplot_main_coords,scale=0.5]

      \def\drawspin{%
        \pgfmathparse{180 * (1 - \r/\Rmax)}
        \pgfmathsetmacro{\thetadeg}{\pgfmathresult}

        \pgfmathparse{-sin(\thetadeg) * sin(\phideg)}
        \pgfmathsetmacro{\mx}{\pgfmathresult}
        \pgfmathparse{ sin(\thetadeg) * cos(\phideg)}
        \pgfmathsetmacro{\my}{\pgfmathresult}
        \pgfmathparse{ cos(\thetadeg)}
        \pgfmathsetmacro{\mz}{\pgfmathresult}

        \pgfmathparse{0.5*(\mz+1)}
        \pgfmathsetmacro{\normalizedmz}{\pgfmathresult}
        \pgfmathparse{1-\normalizedmz}
        \pgfmathsetmacro{\normalizedmmz}{\pgfmathresult}

        \pgfmathparse{0.1 * \mz}
        \pgfmathsetmacro{\zoffset}{\pgfmathresult}
        \coordinate (start) at (\x, \y, \zoffset);

        \draw[
          line width=1.5pt,
          opacity=0.8,
          draw={rgb,1:red,\normalizedmz; green,0; blue,\normalizedmmz},
          arrows={-Stealth[fill=white,length=10pt,width=7pt]}
        ]
          (start) -- ++(\arrowlength*\mx,
                        \arrowlength*\my,
                        \arrowlength*\mz);
      }

      \pgfmathsetmacro{\r}{0}
      \pgfmathsetmacro{\phideg}{0}
      \pgfmathsetmacro{\x}{0}
      \pgfmathsetmacro{\y}{0}
      \drawspin

      \pgfmathsetmacro{\r}{1*\Rmax/6}
      \foreach \k in {0,...,7}{%
        \pgfmathsetmacro{\phideg}{\k*360/8}
        \pgfmathsetmacro{\x}{\r*cos(\phideg)}
        \pgfmathsetmacro{\y}{\r*sin(\phideg)}
        \drawspin
      }

      \pgfmathsetmacro{\r}{2*\Rmax/6}
      \foreach \k in {0,...,11}{%
        \pgfmathsetmacro{\phideg}{\k*360/12}
        \pgfmathsetmacro{\x}{\r*cos(\phideg)}
        \pgfmathsetmacro{\y}{\r*sin(\phideg)}
        \drawspin
      }

      \pgfmathsetmacro{\r}{3*\Rmax/6}
      \foreach \k in {0,...,15}{%
        \pgfmathsetmacro{\phideg}{\k*360/16}
        \pgfmathsetmacro{\x}{\r*cos(\phideg)}
        \pgfmathsetmacro{\y}{\r*sin(\phideg)}
        \drawspin
      }

      \pgfmathsetmacro{\r}{4*\Rmax/6}
      \foreach \k in {0,...,19}{%
        \pgfmathsetmacro{\phideg}{\k*360/20}
        \pgfmathsetmacro{\x}{\r*cos(\phideg)}
        \pgfmathsetmacro{\y}{\r*sin(\phideg)}
        \drawspin
      }

      \pgfmathsetmacro{\r}{5*\Rmax/6}
      \foreach \k in {0,...,23}{%
        \pgfmathsetmacro{\phideg}{\k*360/24}
        \pgfmathsetmacro{\x}{\r*cos(\phideg)}
        \pgfmathsetmacro{\y}{\r*sin(\phideg)}
        \drawspin
      }

      \pgfmathsetmacro{\r}{6*\Rmax/6}
      \foreach \k in {0,...,27}{%
        \pgfmathsetmacro{\phideg}{\k*360/28}
        \pgfmathsetmacro{\x}{\r*cos(\phideg)}
        \pgfmathsetmacro{\y}{\r*sin(\phideg)}
        \drawspin
      }

    \end{tikzpicture}
  \end{minipage}
  \hfill
  \begin{minipage}[t]{0.48\textwidth}
    \centering
    \tdplotsetmaincoords{\el}{\az}
    \begin{tikzpicture}[tdplot_main_coords,scale=0.5]

      \def\drawspin{%
        \pgfmathparse{180 * (1 - \r/\Rmax)}
        \pgfmathsetmacro{\thetadeg}{\pgfmathresult}

        \pgfmathparse{-sin(\thetadeg) * cos(\phideg)}
        \pgfmathsetmacro{\mx}{\pgfmathresult}
        \pgfmathparse{ sin(\thetadeg) * sin(\phideg)}
        \pgfmathsetmacro{\my}{\pgfmathresult}
        \pgfmathparse{ cos(\thetadeg)}
        \pgfmathsetmacro{\mz}{\pgfmathresult}

        \pgfmathparse{0.5*(\mz+1)}
        \pgfmathsetmacro{\normalizedmz}{\pgfmathresult}
        \pgfmathparse{1-\normalizedmz}
        \pgfmathsetmacro{\normalizedmmz}{\pgfmathresult}

        \pgfmathparse{0.1 * \mz}
        \pgfmathsetmacro{\zoffset}{\pgfmathresult}
        \coordinate (start) at (\x, \y, \zoffset);

        \draw[
          line width=1.5pt,
          opacity=0.8,
          draw={rgb,1:red,\normalizedmz; green,0; blue,\normalizedmmz},
          arrows={-Stealth[fill=white,length=10pt,width=7pt]}
        ]
          (start) -- ++(\arrowlength*\mx,
                        \arrowlength*\my,
                        \arrowlength*\mz);
      }

      \pgfmathsetmacro{\r}{0}
      \pgfmathsetmacro{\phideg}{0}
      \pgfmathsetmacro{\x}{0}
      \pgfmathsetmacro{\y}{0}
      \drawspin

      \pgfmathsetmacro{\r}{1*\Rmax/6}
      \foreach \k in {0,...,7}{%
        \pgfmathsetmacro{\phideg}{\k*360/8}
        \pgfmathsetmacro{\x}{\r*cos(\phideg)}
        \pgfmathsetmacro{\y}{\r*sin(\phideg)}
        \drawspin
      }

      \pgfmathsetmacro{\r}{2*\Rmax/6}
      \foreach \k in {0,...,11}{%
        \pgfmathsetmacro{\phideg}{\k*360/12}
        \pgfmathsetmacro{\x}{\r*cos(\phideg)}
        \pgfmathsetmacro{\y}{\r*sin(\phideg)}
        \drawspin
      }

      \pgfmathsetmacro{\r}{3*\Rmax/6}
      \foreach \k in {0,...,15}{%
        \pgfmathsetmacro{\phideg}{\k*360/16}
        \pgfmathsetmacro{\x}{\r*cos(\phideg)}
        \pgfmathsetmacro{\y}{\r*sin(\phideg)}
        \drawspin
      }

      \pgfmathsetmacro{\r}{4*\Rmax/6}
      \foreach \k in {0,...,19}{%
        \pgfmathsetmacro{\phideg}{\k*360/20}
        \pgfmathsetmacro{\x}{\r*cos(\phideg)}
        \pgfmathsetmacro{\y}{\r*sin(\phideg)}
        \drawspin
      }

      \pgfmathsetmacro{\r}{5*\Rmax/6}
      \foreach \k in {0,...,23}{%
        \pgfmathsetmacro{\phideg}{\k*360/24}
        \pgfmathsetmacro{\x}{\r*cos(\phideg)}
        \pgfmathsetmacro{\y}{\r*sin(\phideg)}
        \drawspin
      }

      \pgfmathsetmacro{\r}{6*\Rmax/6}
      \foreach \k in {0,...,27}{%
        \pgfmathsetmacro{\phideg}{\k*360/28}
        \pgfmathsetmacro{\x}{\r*cos(\phideg)}
        \pgfmathsetmacro{\y}{\r*sin(\phideg)}
        \drawspin
      }

    \end{tikzpicture}
  \end{minipage}

  \caption{Schematic illustration of the skyrmion (left) and anti-skyrmion (right) profiles.}
  \label{fig:skyrmion_antiskyrmion}
\end{figure}

Notice that, under the equivariant symmetry,
$$
\bn\in \boM \quad \Longrightarrow \quad 
\Theta (0)\in \pi\Z \quad\text{and}\quad 
\Theta (\infty)\in 2\pi\Z.
$$
Indeed, direct computation yields
\begin{align*}
|\nab \bn|^2 = (\Theta')^2 + \frac{\sin^2 \Theta (\rho)}{\rho^2},&&
(1-n_3)^2 = (1-\cos \Theta (\rho))^2,
\end{align*}
which are integrable only if $\Theta (0)\in \pi\Z$ and $\Theta (\infty)\in 2\pi\Z$. 
For such a map, the degree is computed as
\begin{equation}\label{eq:Q}
\begin{aligned}
Q[\bn] 
&= \frac 1{4\pi} 
\int_0^{2\pi} \int_0^\infty \sin \Theta (\rho) \Theta'(\rho) \Phi'(\psi) d\rho d\psi \\
&= \frac 1{4\pi} [\cos \Theta(0) - \cos \Theta(\infty)] \cdot [\Phi(2\pi)- \Phi(0)],
\end{aligned}
\end{equation}
where $\Phi(2\pi) = \lim_{\del\to+0} \Phi(2\pi-\del)$.

The skyrmion $\bh^r$ defined in \eqref{eq:hm} is an equivariant map, represented as
$$
\bh^r =
\begin{pmatrix}
\cos (\psi+\frac \pi2) \sin \te^r(\rho)\\
\sin (\psi+\frac \pi2) \sin \te^r (\rho)\\
\cos \te^r(\rho)
\end{pmatrix}
,\qquad 
\sin \te^r (\rho) = \frac{4r\rho}{\rho^2+4r^2},
$$
with the boundary conditions $\te^r(0)=\pi$ and $\te^r(\infty)=0$. 
We write $\te\colonequals\te^1$ for 
simplicity. 
Note that $\bh^r\in \boM$ with $Q[\bh^r]=-1$, and that 
$\te^r$ satisfies 
$$
(\te^r)'(\rho) = -\frac{\sin \te^r(\rho)}{\rho}.
$$

For later use, we also introduce the anti-skyrmion \[
\tilde\bh^r(x)\colonequals\tilde\bh\left(\frac{x}{2r}\right), \quad
\tilde\bh(x_1,x_2) \colonequals
\left(
\frac{-2x_1}{|x|^2 + 1} , \frac{2x_2}{|x|^2 +1} ,  \frac{|x|^2-1}{|x|^2+1}
\right).
\]
In polar coordinates,
\begin{equation}\label{eq:defanti}
\tilde\bh^r = 
\begin{pmatrix}
\cos (\pi-\psi) \sin \te^r(\rho)\\
\sin (\pi -\psi) \sin \te^r (\rho)\\
\cos \te^r(\rho)
\end{pmatrix}
.
\end{equation}
Note that $\tilde\bh^r\in \boM$ and $Q[\tilde\bh^{r}]=1$.

\section{Minimal energy}\label{sec:minimal_energy}

In this section we prove Theorem \ref{thm:main1_energy_formula}, dividing the statement into three propositions (Propositions \ref{P2}, \ref{P4}, and \ref{P6}).
We first address the bounded-energy regime $0<r\leq1$ with negative degree (Proposition \ref{P2}), then nonnegative degree (Proposition \ref{P4}), and finally the unbounded-energy regime $r>1$ (Proposiion \ref{P6}).

\subsection{Negative degree case}

Our first goal is to prove the following

\begin{proposition}\label{P2}
Let $k\le -1$ be an integer. If $0<r\le 1$, we have
$$
\inf_{\bn\in\boM_{k}} E_r[\bn] = -4\pi k (1-2r^2) 
= |k|E_r[\bh^r].
$$
\end{proposition}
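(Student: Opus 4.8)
The plan is to establish the two matching bounds
\[
\inf_{\bn\in\boM_k}E_r[\bn]\ \ge\ -4\pi k(1-2r^2)
\qquad\text{and}\qquad
\inf_{\bn\in\boM_k}E_r[\bn]\ \le\ -4\pi k(1-2r^2),
\]
the second via an explicit minimizing sequence of well-separated skyrmions. Throughout set $N\colonequals|k|=-k\ge1$, and recall $E_r[\bh^r]=4\pi(1-2r^2)$, so the claimed value is $N\,E_r[\bh^r]$.

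For the lower bound I would start from the Bogomol'nyi factorization \eqref{e1.1}: for $\bn\in\boM_k$,
\[
E_r[\bn]=4\pi r^2 k+\frac{r^2}2\int_{\R^2}|\boD_1^{r}\bn+\bn\times\boD_2^{r}\bn|^2\,dx+(1-r^2)D[\bn].
\]
Since $0<r\le1$, both $1-r^2$ and the squared term are nonnegative, so $E_r[\bn]\ge 4\pi r^2k+(1-r^2)D[\bn]$. Combining this with the standard topological Dirichlet bound $D[\bn]\ge4\pi|Q[\bn]|=4\pi N$ — obtained by integrating the pointwise inequality $\tfrac12|\nab\bn|^2\ge|\bn\cdot\rd_1\bn\times\rd_2\bn|$ — and using $1-r^2\ge0$ together with $-4\pi k=4\pi N>0$, I get
\[
E_r[\bn]\ge 4\pi r^2 k+(1-r^2)(-4\pi k)=-4\pi k(1-2r^2).
\]
This is precisely where the hypothesis $r\le1$ enters.

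For the upper bound I would construct, for each large $R$, a test map $\bn_R\in\boM_k$ consisting of $N$ widely separated copies of $\bh^r$ and show $E_r[\bn_R]\to N\,E_r[\bh^r]$. Concretely, fix centers $x_1,\dots,x_N$ with pairwise distances at least $4R$ and define $\bn_R$ through its stereographic coordinate $w=(n_1+in_2)/(1+n_3)$ (which vanishes at $\bn=\boe_3$) by
\[
W_R(z)\colonequals\sum_{j=1}^N\eta(|z-x_j|)\,w_j(z),\qquad w_j(z)=\frac{2ri}{\overline{z-x_j}},
\]
where $w_j$ is the stereographic coordinate of $\bh^r(\cdot-x_j)$ and $\eta$ is a radial cutoff with $\eta\equiv1$ on $[0,R]$ and $\eta\equiv0$ on $[2R,\infty)$. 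As the balls $B_{2R}(x_j)$ are disjoint, $\bn_R$ equals $\bh^r(\cdot-x_j)$ on $B_R(x_j)$, equals $\boe_3$ off $\bigcup_j B_{2R}(x_j)$, and interpolates on the annuli. Writing each energy term as the integral of its density (using the bulk form $(\bn-\boe_3)\cdot\nab\times\bn$ for $H$, so that localization introduces no boundary terms) and using $E_r[\boe_3]=0$, the energy splits over the disjoint balls. On $B_R(x_j)$ the contribution is $E_r[\bh^r]$ minus the exterior tail, and on each annulus $|W_R|=O(r/R)$ and $|\nab W_R|=O(r/R^2)$. Since all three densities of $\bh^r$ decay like $\rho^{-4}$ — indeed $|\nab\bh^r|^2=2\sin^2\te^r/\rho^2=O(\rho^{-4})$, $(1-n_3)^2=O(\rho^{-4})$, and the helicity density from \eqref{eq:1124_01} is $O(\rho^{-4})$ — both the tails and the annular contributions are $O(1/R^2)$; hence $E_r[\bn_R]=N\,E_r[\bh^r]+O(1/R^2)\to-4\pi k(1-2r^2)$.

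The two steps needing the most care are the degree bookkeeping and the algebraic decay. For the degree, since $\bn_R\equiv\boe_3$ off the disjoint balls, $Q[\bn_R]=\sum_j d_j$ with $d_j\colonequals\frac1{4\pi}\int_{B_{2R}(x_j)}\bn_R\cdot\rd_1\bn_R\times\rd_2\bn_R\,dx\in\Z$, and I must verify $d_j=Q[\bh^r]=-1$; this holds because the cutoff keeps $W_R$ small (so $\bn_R$ stays bounded away from the south pole $-\boe_3$) throughout the annulus, so collapsing $\rd B_{2R}(x_j)$ to a point does not change the winding. The genuine subtlety is that $\bh^r$ decays only algebraically, $|\bh^r-\boe_3|=O(1/\rho)$, rather than exponentially; the construction nonetheless works because the quartic potential and the cubic helicity densities still decay like $\rho^{-4}$, keeping all error terms summably small. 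Finally, I would note that this upper-bound construction is valid for every $r>0$, so the restriction $0<r\le1$ is used only in the lower bound — consistent with $\inf_{\boM_k}E_r=-\infty$ when $r>1$.
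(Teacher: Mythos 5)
Your proposal is correct and follows essentially the same route as the paper: the lower bound via the Bogomol'nyi factorization \eqref{e1.1} combined with the topological bound $D[\bn]\ge 4\pi|Q[\bn]|$ from \eqref{e1.2}, and the upper bound by gluing $|k|$ widely separated, compactly supported truncations of $\bh^r$ onto the background $\boe_3$. The only difference is technical: you perform the cutoff in the stereographic coordinate $w$, whereas the paper (Lemma \ref{L:cptHM}) cuts off the radial profile $\te^r$; both give the same $O(R^{-2})$ error and the same degree bookkeeping.
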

%
%
%
%
%

To prove Proposition \ref{P2}, and also for later use, we prepare Lemma \ref{L:cptHM} on approximations of (anti-)skyrmions by maps which are equal to $\boe_3$ outside a large ball.
We remark that, in fact, the general density property \eqref{eq:smooth_density} will suffice to compute the minimal energy in the bounded-energy regime (Propositions \ref{P2} and \ref{P4}).
However, in the unbounded-energy regime (Proposition \ref{P6}) we require additional control on the approximating sequence.
For this reason, we begin by constructing explicit approximations of the skyrmion and anti-skyrmion, and use them throughout this section.

\begin{lemma}\label{L:cptHM} 
There exists a family of $C^\infty$-functions $\te^r_R:[0,\infty)\to [0,\infty)$ for $R>0$ such that $\te^r_R(0)=\pi$ and $\supp \te^r_R \subset [0,R)$, and also such that for
\[
\bh^r_R\colonequals \begin{pmatrix}
\cos (\psi+\frac \pi2) \sin \te^r_R(\rho)\\
\sin (\psi+\frac \pi2) \sin \te^r_R(\rho)\\
\cos \te^r_R(\rho)
\end{pmatrix}, \quad
\tilde\bh^r_R \colonequals 
\begin{pmatrix}
\cos (\pi-\psi) \sin \te^r_R(\rho)\\
\sin (\pi -\psi) \sin \te^r_R (\rho)\\
\cos \te^r_R(\rho)
\end{pmatrix},
\]
the following properties hold:
\begin{enumerate}
    \item $\bh^r_R\in \boM_{-1}$ and $\tilde\bh^r_R \in \boM_1$ for all $R>0$;
    \item $\bh^r_R \to \bh^r$ and 
$\tilde\bh^r_R\to \tilde\bh^r$ in $\boM$ as $R\to\infty$;
    \item $\rd_{1} \bh^{r}_{R} (\cdot,0) \to 
\rd_{1} \bh^{r} (\cdot,0)$ in $L^2(\R)$  and $\bh^{r}_{R} (\cdot,0)-\boe_3 \to 
\bh^{r} (\cdot,0) -\boe_3$ in $L^4(\R)$ as $R\to\infty$.
\end{enumerate}
\end{lemma}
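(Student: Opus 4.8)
The plan is to obtain $\te^r_R$ by a direct radial truncation of the explicit skyrmion profile, rather than by invoking the generic density statement \eqref{eq:smooth_density}, since the latter controls convergence only in the two-dimensional metric $d_\boM$ and not the one-dimensional trace convergence demanded by (iii). From the given identity $\sin\te^r(\rho)=4r\rho/(\rho^2+4r^2)$ together with $(\te^r)'=-\sin\te^r/\rho$ and the boundary values, one has the closed form $\te^r(\rho)=2\arctan(2r/\rho)$, so that $\te^r$ is smooth and decreasing on $(0,\infty)$ with $\te^r(0)=\pi$, and in particular $\te^r(\rho)=O(\rho^{-1})$, $\sin\te^r(\rho)=O(\rho^{-1})$, and $(\te^r)'(\rho)=-4r/(\rho^2+4r^2)=O(\rho^{-2})$ as $\rho\to\infty$. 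I then fix once and for all a cutoff $\chi\in C^\infty([0,\infty);[0,1])$ with $\chi\equiv1$ on $[0,1/2]$ and $\supp\chi\subset[0,1)$, and set $\te^r_R(\rho)\colonequals\chi(\rho/R)\te^r(\rho)$. By construction $\te^r_R\geq0$, $\te^r_R(0)=\pi$, $\supp\te^r_R\subset[0,R)$, and $\te^r_R\equiv\te^r$ on $[0,R/2]$.

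For property (i), I observe that $\bh^r_R=\bh^r$ on $B_{R/2}(0)$, where $\bh^r$ is a smooth harmonic map, while on $\R^2\setminus\{0\}$ the map $\bh^r_R$ is smooth because $\te^r_R$ is smooth and $\rho>0$ there; since $\te^r_R$ vanishes near and beyond $\rho=R$, we get $\bh^r_R-\boe_3\in C_c^\infty(\R^2;\R^3)$, hence $\bh^r_R\in\boM$ with finite energy, and likewise for $\tilde\bh^r_R$. The degrees are read off from the equivariant formula \eqref{eq:Q}: since $\te^r_R(0)=\pi$, $\te^r_R(\infty)=0$, and $\Phi(2\pi)-\Phi(0)=\pm2\pi$ for the two windings, we obtain $Q[\bh^r_R]=-1$ and $Q[\tilde\bh^r_R]=+1$ for every $R$.

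For property (ii), I use the polar expressions $|\nabla\bn|^2=(\Theta')^2+\rho^{-2}\sin^2\Theta$ and $(1-n_3)^2=(1-\cos\Theta)^2$. Since $\bh^r_R-\bh^r$ is supported in $\{\rho>R/2\}$, both $\|\nabla(\bh^r_R-\bh^r)\|_{L^2}$ and $\|\bh^r_R-\bh^r\|_{L^4}$ are controlled by the corresponding tail integrals of $\bh^r$ plus the contribution of $\bh^r_R$ on the annulus $\{R/2<\rho<R\}$. The tails of $\bh^r$ vanish as $R\to\infty$ because, after multiplication by the area element $\rho\,d\rho$, the integrands decay like $\rho^{-3}$ (using $(\te^r)'=-\sin\te^r/\rho$ for the Dirichlet part and $1-\cos\te^r=O(\rho^{-2})$ for the potential part); for the annular part one expands $(\te^r_R)'=R^{-1}\chi'(\rho/R)\te^r+\chi(\rho/R)(\te^r)'$ and uses $\te^r=O(\rho^{-1})$ on $[R/2,R]$, so that the $\chi'$-term contributes $O(R^{-2})$. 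The same estimates apply verbatim to $\tilde\bh^r_R$.

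Property (iii) is the genuinely new ingredient and the place requiring care. On the horizontal axis $\{x_2=0\}$ one has $\rd_1=\rd_\rho$, whence a direct computation gives $|\rd_1\bh^r(x_1,0)|^2=\big((\te^r)'(|x_1|)\big)^2$ and $|\bh^r(x_1,0)-\boe_3|^2=2\big(1-\cos\te^r(|x_1|)\big)$, with identical formulas for $\bh^r_R$. Because the two profiles agree for $\rho\leq R/2$, matters reduce once more to one-dimensional tail integrals over $\{|x_1|>R/2\}$: the $L^2$-difference of $\rd_1$ is dominated by $\int_{R/2}^\infty((\te^r)')^2\,d\rho$ plus the annular term $\int_{R/2}^R((\te^r_R)')^2\,d\rho$, and the $L^4$-difference by $\int_{R/2}^\infty(1-\cos\te^r)^2\,d\rho$ plus its analogue. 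All vanish as $R\to\infty$: the first by the $O(\rho^{-2})$ decay of $(\te^r)'$, the last by the $O(\rho^{-2})$ decay of $1-\cos\te^r$, and the annular term by expanding $(\te^r_R)'$ as above, where the $\chi'$-contribution is $\lesssim R^{-2}\int_{R/2}^R(\te^r)^2\,d\rho\to0$. The \textbf{main obstacle} — though ultimately a mild one — is precisely keeping this cutoff-derivative term $R^{-1}\chi'(\rho/R)\te^r$ under control on the annulus $\{R/2<\rho<R\}$; it is exactly here that the quantitative decay $\te^r(\rho)=O(\rho^{-1})$ is essential, and it is what compels us to truncate the explicit profile instead of relying on the abstract density approximation.
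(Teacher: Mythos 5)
Your proposal is correct and follows essentially the same route as the paper: you truncate the explicit profile via $\te^r_R=\chi(\rho/R)\te^r$ (the paper uses a cutoff $\zeta_R$ equal to $1$ on $[0,R/4]$ and supported in $[0,R/2)$, a cosmetic difference), read off the degrees from \eqref{eq:Q}, and prove (ii) and (iii) by the same tail-plus-annulus estimates using $\te^r=O(\rho^{-1})$, $(\te^r)'=O(\rho^{-2})$ and $|\chi'(\rho/R)|/R\lesssim 1/R$. The only addition is your explicit formula $\te^r(\rho)=2\arctan(2r/\rho)$, which the paper uses implicitly through these same decay rates.
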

\begin{proof}
Let $\zeta_R:[0,\infty)\to [0,\infty)$ be a $C^\infty$-function such that
\begin{align*}
\zeta_R(\rho)=1 \text{ for } 0\le\rho\le \frac R4, &&
\zeta_R(\rho)=0 \text{ for } \rho\ge \frac R2, &&
\sup_{\rho\in[0,\infty)}|\zeta_R'| \le \frac CR, 
\end{align*}
with $C>0$ independent of $R$.
We show that the function $\te^r_R \colonequals \zeta_R \te^r$ satisfies the desired properties.
Clearly, we have $\te^r_R\in C^\infty_c([0,\infty))$ with $\te^r_R(0)=\pi$ and $\supp \te^r_R \subset [0,R)$, which also implies $\bh^r_R,\tilde\bh^r_R\in\boM$.
In view of \eqref{eq:Q} we also have $Q[\bh^r_R]=-1$ and $Q[\tilde\bh^r_R]=1$, thus confirming property (i).

For property (ii), we only argue the convergence of $\bh^r_R$, since the case of $\tilde\bh^r_R$ follows by a similar argument.
Then, it suffices to show
\begin{align}\label{eq:approx}
\nor{\nab( \bh^{r}_R -\bh^r) }{L^2}\to 0 \quad \text{and} \quad 
\nor{\bh^r_R - \bh^r}{L^4}\to 0
\end{align}
as $R\to\infty$. 
Since $\supp (\bh^r_R -\bh^r) \subset \R^2\setminus B_{\frac R4}(0)$ and $\supp (\bh^r_R-\boe_3) \subset B_{\frac R2}(0)$, we have
\begin{align*}
\nor{\nab (\bh^r_R -\bh^r)}{L^2(\R^2)}
\le \nor{\nab\bh^r}{L^2(\R^2\setminus B_{\frac R4}(0))} 
+ \nor{\nab\bh^r_R}{L^2(B_{\frac R2}(0)\setminus B_{\frac R4}(0))}.
\end{align*}
Since $\nab\bh^r\in L^2(\R^2)$, the first term clearly converges to $0$. 
For the second term, 
it follows that
\begin{align*}
\nor{\nab\bh^r_R}{L^2(B_{\frac R2}(0)\setminus B_{\frac R4}(0))}^2 
&= 2\pi \int_{R/4}^{R/2} 
\left([(\te^r_R)']^2 + \frac{\sin^2 \te^r_R}{\rho^2}\right) \rho d\rho \\
&= 2\pi \int_{R/4}^{R/2} 
\left([\zeta_R' \te^r + \zeta_R (\te^r)']^2 + \frac{\sin^2 (\zeta_R \te^r)}{\rho^2}\right) \rho d\rho\\
&\le 
C_r \int_{R/4}^{R/2} 
\left( \frac{(\te^r)^2}{R^2} +  
[(\te^r)']^2 + \frac{(\te^r)^2}{\rho^2}\right) \rho d\rho\\
&\le 
C_r \int_{R/4}^{R/2} 
\left( \frac{1}{\rho R^2} +  
\frac{1}{\rho^3}\right)  d\rho
\le \frac{C_r}{R^2} \xrightarrow{R\to\infty} 0.
\end{align*}
Similarly, we have
$$
\nor{\bh^r_R-\bh^r}{L^4(\R^2)}
\le \nor{\bh^r-\boe_3}{L^4(\R^2\setminus B_{\frac R4}(0))}+ \nor{\bh^r_R -\boe_3}{L^4(B_{\frac R2}(0)\setminus B_{\frac R4}(0))} .
$$
The first term converges to $0$ as $R\to\infty$ by $\bh^r -\boe_3\in L^4$. 
For the second term, we have
\begin{align*}
\nor{\bh^r_R -\boe_3}{L^4(B_{\frac R2}(0)\setminus B_{\frac R4}(0))}^4 
&= C \int_{R/4}^{R/2} (1- \cos \te^r_R)^2 \rho d\rho \\
&\le C \int_{R/4}^{R/2} (\zeta_R\te^r)^4 \rho d\rho \\
&\le C_r \int_{R/4}^{R/2} \frac{1}{\rho^3} d\rho 
\le \frac{C_r}{R^2} \xrightarrow{R\to\infty} 0. 
\end{align*}
Hence \eqref{eq:approx} follows. 
In particular, $\bh^r_R\in \boM$ for $R>0$, and $\bh^r_R\to \bh^r$ in $\boM$ as $R\to\infty$.

To prove property (iii), we may restrict the domain onto $\{x_1>0\}$ by symmetry. Then, we can write
\begin{align*}
\bh^r (x_1,0) = 
\begin{pmatrix}
0 \\ \sin \te^{r} (x_1) 
\\ \cos \te^r(x_1)
\end{pmatrix}
,&&
\bh^r_R (x_1,0) = 
\begin{pmatrix}
0 \\ \sin \te^{r}_R (x_1) 
\\ \cos \te^r_R(x_1)
\end{pmatrix}
.
\end{align*}
Hence similar estimates as above yield
$$
\nor{\rd_1\bh^r(\cdot ,0)- \rd_1\bh^r_R(\cdot ,0)}{L^2(x_1>0)}^2
+ 
\nor{\bh^r(\cdot ,0)- \bh^r_R(\cdot ,0)}{L^4(x_1>0)}^4 \le \frac{C_r}{R^3},
$$
which converges to zero as $R\to\infty$.
\end{proof}


%
%
\begin{proof}[Proof of Proposition \ref{P2}]
We first show that $-4\pi k (1-2r^2)$ is a lower bound of the energy. 
We recall the following topological lower bound:
\begin{equation}\label{e1.2}
D[\bn] \ge 4\pi |Q[\bn]|,
\end{equation}
which follows from the well-known identity
\begin{equation}\label{eq:factD}
D[\bn] = \frac 12\int_{\R^2} |\rd_1 \bn \mp \bn\times \rd_2 \bn\bm|^2 dx \pm 
4\pi Q[\bn].
\end{equation}
Since $0<r\le 1$ and $k<0$, 
the factorization \eqref{e1.1} and  \eqref{e1.2} yield
\begin{align*}
    E[\bn] &\ge (1-r^2) D[\bn] +4\pi r^2 Q[\bn]\\
    &\ge 
4\pi(1-r^2)  |Q[\bn]| +4\pi r^2 Q[\bn] = -4 \pi k (1-2r^2).
\end{align*}

Next we show the optimality of the above lower bound.
Let $R>0$, and denote by $\bh^r_{R}: \R^2\to \S^2$, the compactified harmonic map as in Lemma \ref{L:cptHM}. 
Define $\bn_R:\R^2\to\S^2$ to be a map created by gluing the constant map $\boe_3$ with $k$ compactified harmonic maps in disjoint regions, as in Figure \ref{fig:k-vortices}; more specifically,
\begin{equation}\label{eq:defnR}
\bn_R (x) = 
\begin{cases}
    \bh^r_{R} (x-a_j) & \text{if } x\in B_R(a_j),\ j=1,2,...,|k|,\\
    \boe_3 & \text{otherwise},
\end{cases}
\end{equation}
where $a_j\colonequals(10jR,0)\in\R^2$. 
Since $\bh^r_R-\boe_3$ is compactly supported in $B_{R}(0)$, 
the gluing is smooth and independent for each ball, so that 
$\bn_R-\boe_3\in C^\infty_c(\R^2:\R^3)$, 
which implies $\bn_R\in \boM$, 
and
$$
Q[\bn_R] = \frac {|k|}{4\pi}
\int_{B_{R}(0)} \bh^r_R \cdot \rd_1 \bh^r_R \times \rd_2 \bh^r_R dx
= 
(-k)\cdot Q[\bh^r_R] = k.
$$
Since $\bh^r_R\to \bh^r$ in $\boM$ as $R\to\infty$ by Lemma \ref{L:cptHM}, 
we obtain
$$
E[\bn_R] = |k| E[\bh^r_R] \xrightarrow{R\to\infty} 
|k| E[\bh^r] =-4\pi k (1-2r^2),
$$ 
completing the proof.
\end{proof}
\begin{figure}[htbp]
    \footnotesize
    \begin{tikzpicture}[scale=1.5]
    \draw[-latex,very thick] (0,0) arc
    [
        start angle=80,
        end angle=400,
        x radius=0.8,
        y radius =0.8
    ] ;
    \draw[-latex,very thick] (4,0) arc
    [
        start angle=80,
        end angle=400,
        x radius=0.8,
        y radius =0.8
    ] ;
    \node at (1.8,-0.7) {{\Large $\cdots$}};
    \node at (1.8,0.2) {{\normalsize $|k|$ vortices}};
    \node at (3.8,0.4) {{\large $\bm{e}_3$}};
    \draw[<->] ({-0.8*cos(80)},{-0.8*sin(80)}) -- 
    ++(0:0.8);
    \node[anchor=north] at ({add(-0.8*cos(80),0.4)},{-0.8*sin(80)}) {$R$};
\end{tikzpicture}
\caption{The construction of $\bn_R$ in \eqref{eq:defnR}.}
\label{fig:k-vortices}
\end{figure}


\subsection{Nonnegative degree case}

Next we prove the following

\begin{proposition}\label{P4}
Let $k\geq0$ be an integer.
When $0<r\le 1$, we have
$$
\inf_{\bn\in \boM_k} E_r[\bn] = 
4\pi k.
$$
\end{proposition}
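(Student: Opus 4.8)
The plan is to prove the two matching bounds separately, reusing the factorization strategy of Proposition~\ref{P2}. For the lower bound I rewrite \eqref{e1.1} as
\[
E_r[\bn] = 4\pi r^2 Q[\bn] + \frac{r^2}{2}\int_{\R^2}|\boD_1^{r}\bn + \bn\times\boD_2^{r}\bn|^2\,dx + (1-r^2)D[\bn],
\]
drop the nonnegative middle term, and insert $Q[\bn]=k\ge0$ together with the topological bound \eqref{e1.2}, i.e.\ $D[\bn]\ge 4\pi|Q[\bn]|=4\pi k$. Since $0<r\le1$ gives $1-r^2\ge0$, this yields
\[
E_r[\bn]\ge 4\pi r^2 k + (1-r^2)\,4\pi k = 4\pi k,
\]
so $\inf_{\boM_k}E_r\ge 4\pi k$. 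Note the contrast with the negative-degree case: here the DM term contributes $+4\pi r^2 Q=+4\pi r^2 k\ge0$ rather than lowering the energy, which is precisely why the sharp value is the bare Dirichlet bound $4\pi k$.

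For the matching upper bound the key device is a scaling that suppresses the DM and potential terms while fixing the Dirichlet energy at its topological minimum. For a map $\bom$ and $\lambda>0$ set $\bom_\lambda(x)\colonequals\bom(x/\lambda)$; a change of variables gives $D[\bom_\lambda]=D[\bom]$, $H[\bom_\lambda]=\lambda H[\bom]$, and $V[\bom_\lambda]=\lambda^2 V[\bom]$, hence
\[
E_r[\bom_\lambda]=D[\bom]+r\lambda H[\bom]+\lambda^2 V[\bom]\xrightarrow{\ \lambda\to0\ }D[\bom].
\]
Since scaling preserves the degree, it suffices to produce, for each $k\ge1$, one map $\bom\in\boM_k$ with $D[\bom]=4\pi k$ and finite $H[\bom],V[\bom]$; then $E_r[\bom_\lambda]\to4\pi k$ forces $\inf_{\boM_k}E_r\le4\pi k$. (The degree $k=0$ case is trivial, as $E_r[\boe_3]=0$.) The natural candidate is the degree-$k$ anti-holomorphic harmonic map, namely the stereographic preimage of $w=\bar\zeta^{\,k}$, whose $k=1$ instance is the anti-skyrmion $\tilde\bh^r$ of \eqref{eq:defanti}; being harmonic of degree $k$ it saturates \eqref{e1.2}, so $D[\bom]=4\pi k$ exactly. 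Alternatively, and in the spirit of Proposition~\ref{P2}, one may glue $k$ far-separated copies of the compactified anti-skyrmion $\tilde\bh^r_R$ from Lemma~\ref{L:cptHM} to obtain a degree-$k$ map with $D$ arbitrarily close to $4\pi k$, and then rescale.

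The step needing genuine care is admissibility: one must check $\bom\in\boM$, i.e.\ $V[\bom]<\infty$. For $w=\bar\zeta^{\,k}$ the stereographic identity $1-n_3=2/(1+|w|^2)=2/(1+|\zeta|^{2k})$ gives $\int_{\R^2}(1-n_3)^2\,dx\sim\int^{\infty}\rho^{\,1-4k}\,d\rho$, which converges exactly when $k\ge1$. This is the very regime of the proposition and explains why the degree-$k$ harmonic map is admissible only for $k\ge1$, with $\boe_3$ covering $k=0$. Once $\bom\in\boM_k$ with $D[\bom]=4\pi k$ is secured, $H[\bom]$ is automatically finite since $H$ is well defined and continuous on $\boM$, and the displayed scaling limit closes the upper bound. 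Combining the two bounds yields $\inf_{\boM_k}E_r=4\pi k$. I expect the main obstacle to be this construction of an admissible near-minimal degree-$k$ test configuration (rather than the lower bound, which is a one-line consequence of the factorization); note also that the infimum is not attained for $k\ge1$, consistent with the nonexistence part of Theorem~\ref{thm:main4_rigidity}.
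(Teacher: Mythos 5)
Your proof is correct, and the lower bound is word-for-word the paper's argument: drop the nonnegative Bogomol'nyi term in \eqref{e1.1} and combine $4\pi r^2 k$ with $(1-r^2)D[\bn]\ge 4\pi(1-r^2)k$. For the upper bound the two proofs agree for $k=0$ and $k=1$ (the paper also tests the shrinking anti-skyrmion family $\tilde\bh^\la$, computing $D=4\pi$, $H=0$, $V=8\pi\la^2$ explicitly), but diverge for $k\ge 2$: the paper glues $k$ far-separated copies of the compactified anti-skyrmion $\tilde\bh^\la_R$ from Lemma~\ref{L:cptHM} and passes to the limit $R\to\infty$, then takes $\inf_\la$, whereas you use a single degree-$k$ anti-holomorphic rational map (stereographic preimage of $\bar\zeta^{\,k}$) and shrink it. Your route is self-contained for this proposition and arguably cleaner --- it trades the gluing lemma for the one genuine verification you correctly identify, namely $V<\infty$, which your decay computation $(1-n_3)^2\sim|\zeta|^{-4k}$ settles for $k\ge1$; the one point stated a little quickly is the sign of the degree (that the preimage of $\bar\zeta^{\,k}$ lies in $\boM_{+k}$ rather than $\boM_{-k}$), which does hold in the paper's orientation convention, consistently with $Q[\tilde\bh]=+1$, but deserves a line of justification. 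The paper's gluing construction buys something else: Lemma~\ref{L:cptHM} and the glued configurations are reused in the unbounded-energy regime (Proposition~\ref{P6}), which is why the authors set up the machinery here rather than invoking higher-degree harmonic maps.
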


\begin{proof}
The lower bound follows from 
\eqref{e1.1} and \eqref{e1.2}, since
$$
E_r[\bn] \ge 4\pi r^2 Q[\bn] + 4\pi (1-r^2) |Q[\bn]| = 4\pi k.
$$
Thus it suffices to show the optimality of this bound.
The case $k=0$ is straightforward since $E_r[\boe_3]=0$.

To see the case $k=1$, 
we test the anti-skyrmion $\tilde{\bh}^\lambda(x)\colonequals\tilde{\bh}(\frac{x}{2\lambda})$ with $\la>0$, where $\tilde\bh$ is defined in \eqref{eq:defanti}. 
By computation, we have
\[ D[\tilde{\bh}^\lambda]=D[\bh^\lambda]=4\pi,\ H[\tilde{\bh}^\lambda]=0,\ V[\tilde{\bh}^\lambda]=V[\bh^\lambda]=8\pi \lambda^2.
\]
Note also that \eqref{eq:Q} implies
$
Q[\tilde{\bh}^\lambda]=Q[\tilde{\bh}]=1.
$
Therefore,
$$
\inf_{\bn\in\boM_1} E_r[\bn] 
\le \inf_{\la>0} E_r[\tilde\bh^\la] = 4\pi,
$$
which concludes the optimality when $k=1$. 

In the case $k\ge 2$, for $\la,R>0$, 
let $\tilde{\bn}^\la_R$ be the map as in \eqref{eq:defnR} with $\bh^r_R$ replaced by $\tilde\bh^\la_R$ defined in Lemma \ref{L:cptHM}. Then, the same argument in the proof of Proposition \ref{P2} implies $\tilde\bn_R^\la\in\boM$ and $Q[\tilde\bn_R^\la]=k$. Moreover, Lemma \ref{L:cptHM} yields
\begin{align*}
\inf_{\bn\in\boM_{k}} E_r[\bn] 
&\le  
\inf_{\la>0} \liminf_{R\to\infty} E_r[\tilde\bn^{\la}_R] \\
&= \inf_{\la>0} \liminf_{R\to\infty} k E_r[\tilde\bh_R^\la]\\
&= \inf_{\la>0}kE_r[\tilde\bh^\la]= 
4\pi k.
\end{align*}
Hence the proof is complete.
\end{proof}




\subsection{Unboundedness of the energy}

Finally, we address the remaining case $r>1$.
The idea is based on the proof of \cite[Theorem 2]{MR4630481} for $k=-1$, combined with our approximation lemma (Lemma \ref{L:cptHM}).

\begin{proposition}\label{P6}
When $r>1$, for any $k\in \Z$, we have
$$
\inf_{\bn\in \boM_k} E_r[\bn] =-\infty.
$$
\end{proposition}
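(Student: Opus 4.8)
The plan is to produce, for each fixed $k\in\Z$, a sequence $\{\bn_L\}_{L>0}\subset\boM_k$ with $E_r[\bn_L]\to-\infty$ as $L\to\infty$. The instability responsible for this is already visible in degree $0$, so I would first reduce to building a diverging family in $\boM_0$ and then recover arbitrary $k$ by gluing. Concretely, since each of the three terms of $E_r$ and the degree $Q$ are integrals of densities that are local in $(\bn,\nabla\bn)$, both $E_r$ and $Q$ are additive over configurations whose supports of $\bn-\boe_3$ are mutually disjoint (as used in the proof of Proposition \ref{P2} and formula \eqref{eq:defnR}). Thus, given a degree-$0$ family diverging to $-\infty$, I would superimpose in far-away disjoint balls a \emph{fixed} finite-energy configuration built from the compactified blocks of Lemma \ref{L:cptHM}: namely $|k|$ copies of $\bh^r_R$ if $k<0$, or $k$ copies of $\tilde\bh^r_R$ if $k>0$ (for one fixed large $R$). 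Since $Q[\bh^r_R]=-1$ and $Q[\tilde\bh^r_R]=1$, this corrects the total degree to $k$ while altering the energy by only a fixed finite amount; hence the total still diverges and lies in $\boM_k$. In particular this also reproduces the degree $k=-1$ case of \cite{MR4630481}.

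It remains to build the degree-$0$ diverging family, which is where $r>1$ enters. Following the mechanism behind \cite[Theorem 2]{MR4630481}, I would use an \emph{anisotropic}, essentially one-dimensional test map. Take the restriction of the compactified skyrmion to the line $\{x_2=0\}$, i.e.\ the planar slice $\mathbf{w}(x_1):=\bh^r_R(x_1,0)$, extend it to be independent of $x_2$ on the strip $\{|x_2|\le L\}$, and bring its polar angle down to $\boe_3$ across the unit-width collars $\{L\le|x_2|\le L+1\}$ (interpolating the polar angle $\Theta$ rather than the vector itself, which keeps the map sphere-valued). Writing $\bn_L$ for the result, one has $\bn_L-\boe_3\in C^\infty_c$ and $Q[\bn_L]=0$ (the bulk map factors through a curve on $\S^2$), while on the strip all three densities reduce to their one-dimensional forms. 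This gives
\[
E_r[\bn_L]=2L\,g_{1\mathrm{D}}[\mathbf{w}]+O(1)\qquad(L\to\infty),
\]
where $g_{1\mathrm{D}}[\mathbf{w}]=\tfrac12\int_\R|\mathbf{w}'|^2\,dx_1+r\int_\R\bigl(-w_2w_3'+(w_3-1)w_2'\bigr)\,dx_1+\tfrac12\int_\R(1-w_3)^2\,dx_1$ is the one-dimensional slice energy and the $O(1)$ error is the (length-independent) collar contribution.

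The heart of the matter is to show $g_{1\mathrm{D}}$ can be made strictly negative for every $r>1$. Parametrizing the slice by its polar angle $\Theta$ in a fixed meridian plane, a Bogomol'nyi-type completion of squares yields
\[
g_{1\mathrm{D}}[\mathbf{w}]=\frac12\int_\R\bigl(\Theta'+s\,r(1-\cos\Theta)\bigr)^2\,dx_1+\frac{1-r^2}{2}\int_\R(1-\cos\Theta)^2\,dx_1,
\]
with $s=s(x_1)=\pm1$ recording the plane's orientation; the second term is strictly negative when $r>1$. Optimizing the length scale of the profile (equivalently replacing $\mathbf{w}(x_1)$ by $\mathbf{w}(x_1/\mu)$, which only rescales the two nonnegative integrals) drives $g_{1\mathrm{D}}$ below zero; a direct evaluation on the optimally rescaled skyrmion slice gives the clean value $g_{1\mathrm{D}}=2\pi(1-r)<0$. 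Passing from the true slice to the compactified one is handled by Lemma \ref{L:cptHM}(iii), which supplies exactly the convergences $\rd_1\bh^r_R(\cdot,0)\to\rd_1\bh^r(\cdot,0)$ in $L^2(\R)$ and $\bh^r_R(\cdot,0)-\boe_3\to\bh^r(\cdot,0)-\boe_3$ in $L^4(\R)$; via $\frac12(1-w_3)^2=\tfrac18|\mathbf{w}-\boe_3|^4$ (cf.\ \eqref{eq:1124_02}) and Hölder for the DM term, these control all three parts of $g_{1\mathrm{D}}$. Hence $g_{1\mathrm{D}}[\mathbf{w}]<0$ already for $R$ large; fixing such $R$ and sending $L\to\infty$ gives $E_r[\bn_L]\to-\infty$.

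I expect the main obstacle to be precisely this sign analysis: verifying that the one-dimensional slice energy becomes negative for \emph{all} $r>1$, not merely for $r$ above some larger constant. This is the analytic signature of the helical transition at $r=1$, and it forces the use of a correctly rescaled (rather than naively scaled) profile, since the skyrmion slice at its own scale gives a \emph{positive} value $\pi/(2r)$. The secondary technical point is the bookkeeping of the two collars and the polar-angle interpolation there, ensuring the error is genuinely $O(1)$ in $L$ and that $\bn_L$ remains sphere-valued; interpolating $\Theta$ keeps the map away from the origin and reduces the collar estimates to integrating the fixed, compactly supported slice profile, which is routine.
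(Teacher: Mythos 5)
Your overall strategy---stretch a one-dimensional skyrmion slice over a strip of length $L$, show that its per-unit-length energy is strictly negative for every $r>1$ after rescaling, and correct the topological degree by gluing far-away compactified (anti-)skyrmions from Lemma~\ref{L:cptHM}---is exactly the paper's, and your analysis of what you call the heart of the matter is correct: the completion of squares, the positive value $\pi/(2r)$ at the skyrmion's own scale, and the negative value $2\pi(1-r)$ at the optimal scale all check out, and your insistence on the correct rescaling is well placed. The genuine gap is in the capping of the strip. Your prescription---bring the polar angle down to $\boe_3$ across unit-width collars by interpolating $\Theta$---cannot work, for a topological reason forced by the very mechanism that makes the slice energy negative. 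The DM contribution of the slice equals $r\int_\R(1-\cos\Theta)\Theta'\,dx_1=r\bigl[\Theta-\sin\Theta\bigr]_{\Theta(-\infty)}^{\Theta(+\infty)}$, which is nonzero only if $\Theta(+\infty)-\Theta(-\infty)\in 2\pi\Z\setminus\{0\}$; so the profile necessarily traverses the full meridian circle, and as a loop in that circle it has winding number $\pm1$ and is not null-homotopic within it. Concretely: if you take $\Theta$ continuous with $\Theta\equiv 2\pi$ on $(-\infty,-R]$ and $\Theta\equiv 0$ on $[R,\infty)$, then $\eta(|x_2|)\Theta(x_1)$ is not a multiple of $2\pi$ on the infinite half-strips $\{x_1\le -R,\ L\le|x_2|\le L+1\}$, so the collar map is not $\boe_3$ there and has infinite Dirichlet and potential energy; if instead you normalize $\Theta$ to jump by $2\pi$ at the origin, the interpolated map becomes discontinuous across $\{x_1=0\}$ inside the collar. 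Either way $\bn_L\notin\boM$, the claimed $O(1)$ collar estimate fails, and the assertion $Q[\bn_L]=0$ is unsupported.

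The repair is precisely what the paper does: cap the strip at $x_2=\pm L$ with the two half-disks of the compactified two-dimensional skyrmion $\bh^{1/r}_L$, which provides a finite-energy null-homotopy through the sphere rather than within the meridian circle, with energy controlled by Lemma~\ref{L:cptHM}. Any admissible cap sweeps an odd multiple of a hemisphere and therefore contributes a half-integer to $Q$; with the paper's choice the two caps add up to $-1$, so the stretched object lies in $\boM_{-1}$ and one glues $|k+1|$ (not $|k|$) far-away vortices to reach degree $k$. With that single modification---and the corresponding bookkeeping of the degree---the rest of your argument goes through essentially verbatim.
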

\begin{proof}
We first consider the case when $k\le -1$. 
For $L>0$, we define
\begin{equation}\label{eq:defnL}
    \bn_L(x) \colonequals 
    \begin{cases}
        \bh_L^{1/r} (x_1,0) &\text{in } 
        \{-L\le x_1,x_2\le L\},\\
        \bh^{1/r}_L (x_1, x_2-L) & \text{in } B_L((0,L))\cap \{x_2\ge L\},\\
        \bh^{1/r}_L (x_1, x_2+L) & \text{in } B_L((0,-L))\cap \{x_2\le -L\},\\
        \bh^{1}_1 (x - b_j) & \text{in } B_1(b_j),\ 1\le j\le |k+1|,\\
        \boe_3 & \text{otherwise},
    \end{cases}
\end{equation}
where $b_j\colonequals (10(L+j),0)\in\R^2$ (see also Figure \ref{fig:1D_stretch_|k+1|_vortices}).
Then $\bn_L-\boe_3$ is continuous and compactly supported on $\R^2$, and smooth with bounded derivatives on $\R^2\setminus\{x_2=\pm L\}$.
Hence $\bn_L-\boe_3\in C_c \cap W^{1,\infty}\subset L^4\cap H^1$, and thus $\bn_L\in \boM$.

Next, we compute the degree of $\bn_L$.
Letting $q(\n)$ be the energy density of $Q$ for $\bn\in\boM$, namely
$
q(\bn)= \frac 1{4\pi} \bn\cdot \rd_1 \bn \times \rd_2 \bn,
$
we have
\begin{align*}
Q(\bn_L) &= 
\int_{B_L((0,L))\cap \{x_2\ge L\}}
q(\bh^{1/r}_{L}(x_1,x_2-L)) dx\\
&\quad +
\int_{B_L((0,-L))\cap \{x_2\le -L\}}
q(\bh^{1/r}_{L}(x_1,x_2+L)) dx\\
&\quad +
\sum_{j=1}^{|k+1|} 
\int_{B_1(b_j)}
q(\bh^{1}_{1}(x-b_j)) dx\\
&=
\int_{B_L(0)} q(\bh^{1/r}_L) dx + (-k-1) 
\int_{B_1(0)} q(\bh^{1}_1) dx = -1+ (k+1) =k.
\end{align*}
Hence $\bn_L\in \boM_{k}$.

Finally we show that the energy of $\bn_L$ diverges to $-\infty$ as $L\to\infty$.
Letting $e_r(\bn)$ be the energy density of $E_r$ for $\bn\in\boM$, we have
\begin{align*}
E_r[\bn_L]  
&=
\int_{-L}^L \int_{-L}^L 
e_r(\bh^{1/r}_L(x_1,0)) dx_1dx_2 
\\
& \quad + \int_{B_L((0,L))\cap \{x_2\ge L\}}
e_r(\bh^{1/r}_{L}(x_1,x_2-L)) dx\\
& \quad +
\int_{B_L((0,-L))\cap \{x_2\le -L\}}
e_r(\bh^{1/r}_{L}(x_1,x_2+L)) dx\\
& \quad +
\sum_{j=1}^{|k+1|} 
\int_{B_1(b_j)}
e_r(\bh^{1}_{1}(x-b_j)) dx\\
&=
L\int_{-L}^L e_r(\bh^{1/r}_L (x_1,0)) dx_1 
+ 
\int_{B_L(0)} e_r(\bh^{1/r}_L)dx + O_{L\to\infty} (1).
\end{align*}
Here, Lemma \ref{L:cptHM} implies that
$$
\int_{B_L(0)} e_r(\bh^{1/r}_L)dx = O_{L\to\infty} (1).
$$
Now we claim that
\begin{equation}\label{eq:3.6}
\int_{-L}^L e_r(\bh^{1/r}_L(x_1,0)) dx_1 \xrightarrow{L\to\infty} 
\int_\R e_r(\bh^{1/r}(x_1,0)) dx_1 
= \int_\R \frac{2(1-r^2)}{(r^2x_1^2+1)^2} dx_1 <0.
\end{equation}
First, noting that $\supp \bh^{1/r}_L(\cdot,0)\subset [-L,L]$, 
we have
\begin{align*}
&\int_{-L}^L e_r(\bh^{1/r}_L(x_1,0)) dx_1\\
&=
\int_{\R} \frac 12 |\rd_{1}\bh^{1/r}_L(x_1,0)|^2 
+
r (\bh^{1/r}_L(x_1,0)-\boe_3)\cdot 
\begin{pmatrix}
\rd_1 \\ 0 \\ 0
\end{pmatrix}
\times 
\bh^{1/r}_L(x_1,0) \\
&\qquad + \frac18 |\bh^{1/r}_L(x_1,0)-\boe_3|^4 dx_1.
\end{align*}
By Lemma \ref{L:cptHM}, we have
\begin{align*}
&\int_{\R} \frac 12 |\rd_{1}\bh^{1/r}_L(x_1,0)|^2 +
\frac18 |\bh^{1/r}_L(x_1,0)-\boe_3|^4 dx_1 \\
&\xrightarrow{L\to\infty} 
\int_{\R} \frac 12 |\rd_{1}\bh^{1/r}(x_1,0)|^2 +
\frac18 |\bh^{1/r}(x_1,0)-\boe_3|^4 dx_1.
\end{align*}
On the other hand, integration by parts yields
\begin{align*}
&\int_{\R} (\bh^{1/r}_L(x_1,0)-\boe_3)\cdot 
\begin{pmatrix}
\rd_1 \\ 0 \\ 0
\end{pmatrix}
\times 
\bh^{1/r}_L(x_1,0)\\
&=
\int_{\R} 
(h^{1/r}_{L})_2 (x_1,0) 
(-\rd_1 (h^{1/r}_{L})_3(x_1,0)) 
+
((h^{1/r}_{L})_3(x_1,0) -1)
\rd_1 (h^{1/r}_{L})_2 
(x_1,0) dx_1
\\
&= 2
\int_{\R} \rd_1 (h^{1/r}_{L})_2
(x_1,0)
((h_{L}^{1/r})_3(x_1,0)-1) dx_1 \\
&= -
 \int_{\R} \rd_1 (h^{1/r}_{L})_2 (x_1,0) |\bh^{1/r}_L(x_1,0)-\boe_3|^2 dx_1, 
\end{align*}
where we used \eqref{eq:1124_02} in the last identity. 
The same formula holds for $\bh^{1/r}$ as well. 
Therefore, Lemma \ref{L:cptHM} (iii), together with the H\"older inequality, implies
\begin{align*}
&\int_{\R} \rd_1 (h^{1/r}_{L})_2 (x_1,0) |\bh^{1/r}_L(x_1,0)-\boe_3|^2 dx_1 \\
&\xrightarrow{L\to\infty}
\int_{\R} \rd_1 h^{1/r}_2 (x_1,0) |\bh^{1/r}(x_1,0)-\boe_3|^2 dx_1,
\end{align*}
which concludes \eqref{eq:3.6}. 
Consequently, there exists $C>0$ such that 
for large enough $L$, we have
$$
\int_{-L}^L e_r(\bh^{1/r}_L(x_1,0)) dx_1 \le -C.
$$
Therefore,
$$
E_r[\bn_L] \le -CL + O_{L\to\infty} (1) \xrightarrow{L\to\infty} -\infty.
$$

When $k\ge 0$, we replace $\bh^1_1$ in the definition of $\bn_L$ by $\tilde\bh^{1}_1$. Then $\bn_L\in\boM_{k}$, and $\lim_{L\to\infty} E_r[\bn_L]=-\infty$ follows in the same manner.
\end{proof}

\begin{figure}[htbp]
\begin{center}\scriptsize
    \begin{tikzpicture}[scale=1.5]
    \fill [pattern=north west lines]
    (1,1)  arc (0:180:1) --cycle;
    \draw 
    (1,1)  arc (0:180:1) --cycle;
    \fill [pattern=north west lines]
    (-1,-1)  arc (180:360:1) --cycle;
    \draw 
    (-1,-1)  arc (180:360:1) --cycle;
    \filldraw [fill=lightgray]
	 (-1, 1)
	-- (1, 1)
	-- (1,-1)
    -- (-1,-1)
        -- cycle;
    \draw [thick] (-1,1)--(1.5,1) node [anchor=west] {\footnotesize $x_2=L$};
    \draw [thick] (-1,-1)--(1.5,-1) node [anchor=west] {\footnotesize $x_2=-L$};
    \node[fill=white] at (0,0) {$\bh^{1/r}_L(x_1,0)$};
    \node[fill=white] at (0,1.4) {$\bh^{1/r}_L(x_1,x_2-L)$};
    \node[fill=white] at (0,-1.4) {$\bh^{1/r}_L(x_1,x_2+L)$};
    \draw[-latex,very thick] (3,0.49) arc
    [
        start angle=80,
        end angle=400,
        x radius=0.5,
        y radius =0.5
    ] ;
    \draw[-latex,very thick] (5.5,0.49) arc
    [
        start angle=80,
        end angle=400,
        x radius=0.5,
        y radius =0.5
    ] ;
    \node at (4.2,0) {{\Large $\cdots$}};
    \node at (4.2,0.9) {{\normalsize $|k+1|$ vortices}};
\end{tikzpicture}
\end{center}
\caption{The construction of $\bn_L$ in \eqref{eq:defnL}.}
\label{fig:1D_stretch_|k+1|_vortices}
\end{figure}

\begin{proof}[Proof of Theorem \ref{thm:main1_energy_formula}]
    It follows by Propositions \ref{P2}, \ref{P4}, and \ref{P6}.
\end{proof}

\section{Rigidity of minimizers}\label{sec:rigidity}

In this section we prove the rigidity of minimizers in Theorem \ref{thm:main4_rigidity}.
We begin with a basic characterization in the non-endpoint case.

\begin{lemma}\label{lem:minimizer_necessary}
    Let $0<r<1$, $k\in\Z$, and $\bn\in\boM_k$.
    Then $\bn$ is a minimizer of $E_r$ in $\boM_k$ if and only if $\bn$ satisfies
    \begin{align*}
    |\boD^r_1\bn + \bn\times \boD^r_2\bn| = 0 \qquad \text{and} \qquad
    D[\bn] = 4\pi|Q[\bn]|.
    \end{align*}
\end{lemma}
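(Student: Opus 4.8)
The plan is to read off the characterization directly from the Bogomol'nyi factorization \eqref{e1.1}, exploiting that \emph{both} coefficients appearing there are strictly positive when $0<r<1$. Rewriting \eqref{e1.1} as
\begin{equation*}
E_r[\bn] = 4\pi r^2 Q[\bn] + \frac{r^2}{2}\int_{\R^2}|\boD^r_1\bn + \bn\times\boD^r_2\bn|^2\,dx + (1-r^2)D[\bn],
\end{equation*}
I would first combine this with the topological lower bound \eqref{e1.2} to pin down the exact infimum. Setting $k=Q[\bn]$ and using $D[\bn]\ge 4\pi|k|$, a short computation shows that the two ``rigid'' contributions $4\pi r^2 k + 4\pi(1-r^2)|k|$ equal $4\pi k$ for $k\ge 0$ and $4\pi|k|(1-2r^2)$ for $k<0$---precisely the minimal energy furnished by Theorem \ref{thm:main1_energy_formula}.

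With this identification, the whole statement reduces to an equality-case analysis. I would introduce the nonnegative defect
\begin{equation*}
F[\bn] := \frac{r^2}{2}\int_{\R^2}|\boD^r_1\bn + \bn\times\boD^r_2\bn|^2\,dx + (1-r^2)\bigl(D[\bn]-4\pi|Q[\bn]|\bigr),
\end{equation*}
so that $E_r[\bn] = \bigl(\inf_{\boM_k}E_r\bigr) + F[\bn]$ for every $\bn\in\boM_k$. Both summands of $F$ are nonnegative: the first obviously, and the second because $1-r^2>0$ together with \eqref{e1.2}. Hence $\bn$ is a minimizer if and only if $F[\bn]=0$.

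The final step is merely to observe that a sum of two nonnegative terms vanishes if and only if each term vanishes. Since $r^2/2>0$, the first term vanishes iff $\int_{\R^2}|\boD^r_1\bn + \bn\times\boD^r_2\bn|^2\,dx = 0$, that is $|\boD^r_1\bn + \bn\times\boD^r_2\bn|=0$ a.e.; since $1-r^2>0$, the second term vanishes iff $D[\bn]=4\pi|Q[\bn]|$. This yields the asserted equivalence.

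There is no substantive analytical obstacle here: the crux is simply the strict positivity of both coefficients $r^2/2$ and $1-r^2$, which is exactly what the hypothesis $0<r<1$ guarantees. I would emphasize that the endpoint $r=1$ is excluded for a structural reason---there the coefficient $1-r^2$ degenerates, so the Dirichlet constraint $D[\bn]=4\pi|Q[\bn]|$ is no longer forced by minimality, which is precisely why the endpoint admits the extra family of minimizers discussed in Appendix \ref{sec:couterexample_endpoint}.
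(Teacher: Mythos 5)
Your proposal is correct and follows essentially the same route as the paper: both rest on the factorization \eqref{e1.1}, the topological bound \eqref{e1.2}, the identification of $4\pi r^2 k + 4\pi(1-r^2)|k|$ with the infimum from Theorem~\ref{thm:main1_energy_formula}, and the equality-case analysis made possible by the strict positivity of $r^2/2$ and $1-r^2$. Packaging the two nonnegative excess terms into a defect functional is only a cosmetic difference from the paper's chain of inequalities.
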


\begin{proof}
    Using \eqref{e1.1} and \eqref{e1.2} we have
    \begin{align*}
    E_r[\bn] &= \frac {r^2}2 \int_{\R^2} |D^r_1\bn + \bn\times D^r_2\bn|^2 dx 
    + 4\pi r^2k + (1-r^2)D[\bn]\\
    &\ge 4\pi r^2k + (1-r^2)D[\bn]\\
    &\ge 4\pi r^2k + 4\pi(1-r^2)|Q[\bn]|,
    \end{align*}
    where the last term agrees with the minimal energy computed in Theorem \ref{thm:main1_energy_formula}.
    Hence $\bn$ is a minimizer if and only if the two 
    inequalities in the middle become equality, yielding the desired identities.
\end{proof}

The equation $|\boD^r_1\bn + \bn\times \boD^r_2\bn| = 0$ is often called the \emph{Bogomol'nyi equation}.

\subsection{Nonnegative degree case}

We first address the case of nonnegative degrees.
In this case we give a fairly simple argument based solely on the following energy identity.

\begin{lemma}\label{L9}
Let $\bn\in \boM$ satisfy $|\boD^r_1 \bn + \bn\times \boD^r_2 \bn| =0$. Then
$$
D[\bn] - 4\pi Q[\bn] = \frac 1{r^2} V[\bn].
$$
\end{lemma}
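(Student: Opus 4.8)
The plan is to read off the identity directly from the pointwise structure of the Bogomol'nyi expression, bypassing the helicity term and the factorization \eqref{e1.1} entirely. First I would separate the helical operator into its first-order and zeroth-order parts. Using $\boD^r_j\bn = \rd_j\bn - \tfrac1r\boe_j\times\bn$ together with the vector identity $\bn\times(\boe_2\times\bn) = \boe_2 - n_2\bn$ (a consequence of $|\bn|=1$), one obtains
\[
\boD^r_1\bn + \bn\times\boD^r_2\bn = P - \tfrac1r W, \qquad P\colonequals \rd_1\bn + \bn\times\rd_2\bn, \quad W\colonequals \boe_1\times\bn + \boe_2 - n_2\bn .
\]
Thus the hypothesis $|\boD^r_1\bn + \bn\times\boD^r_2\bn|=0$ is exactly the pointwise identity $P = \tfrac1r W$ a.e.\ on $\R^2$.

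The heart of the matter is then to recognize each of $\tfrac12\int|P|^2$ and $\tfrac12\int|W|^2$ as one of the quantities in the claim. For $P$ this is the topological factorization \eqref{eq:factD}: expanding $|\rd_1\bn + \bn\times\rd_2\bn|^2 = |\rd_1\bn|^2 + |\rd_2\bn|^2 + 2\,\rd_1\bn\cdot(\bn\times\rd_2\bn)$ and using the triple-product identity $\int_{\R^2}\rd_1\bn\cdot(\bn\times\rd_2\bn)\,dx = -4\pi Q[\bn]$ gives $\tfrac12\int_{\R^2}|P|^2\,dx = D[\bn] - 4\pi Q[\bn]$. For $W$ the point is a short algebraic cancellation: since $\boe_1\times\bn\perp\bn$ and $\boe_2 - n_2\bn\perp\bn$, one has $|\boe_1\times\bn|^2 = 1-n_1^2$ and $|\boe_2 - n_2\bn|^2 = 1-n_2^2$, while the cross term is $2(\boe_1\times\bn)\cdot\boe_2 = 2\,\bn\cdot(\boe_2\times\boe_1) = -2n_3$; summing and using $n_1^2+n_2^2 = 1-n_3^2$ yields $|W|^2 = 2 - (n_1^2+n_2^2) - 2n_3 = (1-n_3)^2$. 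Hence $\tfrac12\int_{\R^2}|W|^2\,dx = \tfrac12\int_{\R^2}(1-n_3)^2\,dx = V[\bn]$.

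With these two identities in hand the conclusion is one line: the Bogomol'nyi equation $P = \tfrac1r W$ forces $|P|^2 = \tfrac1{r^2}|W|^2$ pointwise, and integrating over $\R^2$ gives $D[\bn] - 4\pi Q[\bn] = \tfrac1{r^2}V[\bn]$, as required. The only non-bookkeeping step is the cancellation $|W|^2 = (1-n_3)^2$, which identifies the zeroth-order part of the helical operator with the potential density $2(1-n_3)=|\bn-\boe_3|^2$ of \eqref{eq:1124_02}; I expect this to be the main (if brief) obstacle, and I would double-check it by the orthogonal decomposition above. All manipulations are justified on $\boM$: one has $W\in L^2$ since $|W| = 1-n_3$ and $V[\bn]<\infty$, hence also $P\in L^2$, and the degree integral converges as recalled in Section~\ref{subsec:functional}.
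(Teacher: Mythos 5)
Your proposal is correct and follows essentially the same route as the paper: split the Bogomol'nyi operator into the first-order part $P=\rd_1\bn+\bn\times\rd_2\bn$ and the zeroth-order part $\tfrac1r\bigl(\boe_1\times\bn+\bn\times(\boe_2\times\bn)\bigr)$, identify $\tfrac12\int|P|^2=D-4\pi Q$ and the square of the zeroth-order part with $(1-n_3)^2$, and integrate. The only (cosmetic) difference is that you compute $|W|^2=(1-n_3)^2$ via the orthogonal decomposition and a triple-product cross term, while the paper writes out the components of $\boe_1\times\bn+\bn\times(\boe_2\times\bn)$ explicitly; both are correct.
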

\begin{proof}
By definition of the helical derivative, we have
$$
\rd_1 \bn + \bn \times \rd_2 \bn = \frac 1r \left( \boe_1 \times \bn + \bn\times (\boe_2\times \bn)
\right).
$$
Computation using $|\bn|=1$ yields
\begin{align*}
|\rd_1 \bn + \bn\times \rd_2\bn|^2 = |\nab \bn|^2 - 2\bn\cdot \rd_1\bn \times \rd_2\bn,
\end{align*}
while
$$
\boe_1\times \bn +\bn\times (\boe_2\times \bn) = 
\begin{pmatrix}
-n_1n_2 \\ 
1-n_3 - n_2^2\\
n_2(1-n_3)
\end{pmatrix}
,
$$
which implies
\begin{align*}
|\boe_1\times \bn +\bn\times (\boe_2\times \bn)|^2
&=
n_1^2n_2^2 + (1-n_3-n_2^2)^2 + n_2^2(1-n_3)^2\\
&=
(1-n_3)^2 + n_2^2\left(|\bn|^2-1\right)
= \frac 14|\bn-\boe_3|^4.
\end{align*}
Comparing and integrating these norms completes the proof.
\end{proof}

\begin{proof}[Proof of Theorem \ref{thm:main4_rigidity} for nonnegative degrees]
Suppose that $0<r<1$ and $\bn$ is a minimizer of $E_r$ in $\boM_k$ for $k\geq0$.
Then Lemma \ref{lem:minimizer_necessary} together with Lemma \ref{L9} and $Q[\bn]=k\geq0$ yields
$$
\frac 1{r^2}V[\bn] = D[\bn] -4\pi Q[\bn] =0,
$$
which implies $\bn\equiv\boe_3$, and hence $k=0$. 
\end{proof}

\subsection{Negative degree case}

Now we turn to the negative degree case.
Here we use much more sophisticated tools, namely the regularity theory for harmonic maps, and also a complex analytic approach inspired by Barton-Singer--Ross--Schroers' classification theory for the Bogomol'nyi equation \cite{MR4091507}.

\begin{proof}[Proof of Theorem \ref{thm:main4_rigidity} for negative degrees]
    Suppose that $0<r<1$ and $\bn$ is a minimizer of $E_r$ in $\boM_k$ with $k<0$.
    Then, 
    the second identity in Lemma \ref{lem:minimizer_necessary} together with \eqref{eq:factD} and $Q[\bn]=k< 0$, as well as with the known regularity theory for harmonic maps (see, e.g., \cite[Lemma A.2]{MR728866}), implies that
    the map $\bn$ is an anti-holomorphic harmonic map from $\R^2 \simeq \C$, where $z=x_1+ix_2$, to the Riemann sphere $\S^2\simeq\C\cup\{\infty\}$ (regarding the northpole as $\infty$).
    
    Set $X_\pm = \{x\in\R^2 \mid \bn(x)=\pm \boe_3\}$. 
    Then $U\colonequals \R^2 \setminus \bigcup_{\pm} X_\pm$ is a non-empty connected open subset of $\R^2$; indeed, $X_\pm$ consists only of at most isolated points since $\bn$ is nonconstant in view of $Q[\bn]\neq 0$. 
    
    

    Following \cite{MR4091507}, we define $v:U\to\C$ by
    \begin{align*}
        v\colonequals \frac{1+n_3}{n_1+in_2},
    \end{align*}
    a stereographic projection from the northpole.
    Then the Bogomol'nyi equation $|\boD^r_1\bn + \bn\times \boD^r_2\bn| = 0$ is transformed into
    $\rd_{\ovl{z}} v = -i/2r$, from which we can write
    \begin{equation}\label{familysol}
    v= -\frac{i}{2r} \ovl{z} + f(z)
    \end{equation}
    for some holomorphic function $f$ on $U$. On the other hand, since $\bn$ is anti-holomorphic, it follows that $f$ is a constant map.
    Noting that
    \begin{equation}\label{eq:v_to_n}
        n_1+in_2=\frac{2\ovl{v}}{|v|^2+1}, \quad n_3=\frac{|v|^2-1}{|v|^2+1}
    \end{equation}
    we can write, for some $x_0\in \R^2$,
    $$
    \bn (x) = \bh^{r}(x-x_0)\quad \text{on } U.
    $$
    Since $\bn$ is anti-holomorphic, this identity extends to the whole $\R^2$, and thus in particular $X_+ = \emptyset$ and $X_-=\{x_0\}$.
    If $k=-1$, this implies the uniqueness of the minimizer of $\inf_{\boM_{-1}}E_r$, while a contradiction follows when $k\le -2$.
\end{proof}

\begin{remark}
    The above approach also works for nonnegative degrees, but our proof based on Lemma \ref{L9} in the previous section is completely self-contained and much simpler.
\end{remark}



\section{Stability of the homogeneous state}\label{sec:stability}

In this section, we analyze the stability of the homogeneous state $\bn = \boe_3$ for the energy functional $E_{r,h}$ in \eqref{eq:E_rh}.
By \eqref{eq:1124_02} we have
\begin{equation}\label{eq:ZL^2}
	Z[\bn] = \int_{\R^2}(1-n_3) dx = \frac12 \int_{\R^2}|\bn-\boe_3|^2 dx.
\end{equation}  
Recall that, when $h \neq 0$, the natural space for $E_{r,h}$ is $\boM'$ in \eqref{eq:space_M'}, which is a proper subset of $\boM$. 

 
We proceed to the stability analysis: first the critical $h=0$, then supercritical $h>0$, and finally subcritical case $h<0$.

\subsection{The critical case} 

In order to address the stability in the critical case $h=0$, we adopt a direct variational approach. We estimate the total energy of configurations that are close to $\boe_3$ with respect to the $L^2$-norm, and show that any $L^2$-perturbation leads to a strict increase in energy.

\begin{proof}[Proof of Theorem~\ref{thm:main2_stability}]
Let us write $\bn = \boe_3 + \bphi$. A straightforward computation yields
\begin{equation}\label{energydifference*}
E_r[\bn] - E_r[\boe_3] = 
\frac 12 \int_{\R^2} |\nab \bphi|^2 dx 
+ r\int_{\R^2} \bphi \cdot \nab \times \bphi dx 
+ \frac 18 \int_{\R^2} |\bphi|^4 dx.
\end{equation}
Note that
\begin{equation}\label{eq:phi_3}
1= |\bn|^2 = \phi_1^2 + \phi_2^2 + (1+\phi_3)^2 \quad
\Longleftrightarrow
\quad
\phi_3 = -\frac 12 |\bphi|^2.
\end{equation}
Then
\begin{align}
r \left| \int_{\R^2} \bphi \cdot \nab \times \bphi dx\right| 
&\stackrel{\eqref{eq:H_int_by_parts}}{=} 2
r \left|\int_{\R^2} \phi_3 (\rd_1 \phi_2 - \rd_2 \phi_1) dx\right| \notag\\
&\le 2
r \nor{\phi_3}{L^2} \nor{\rd_1 \phi_2 - \rd_2 \phi_1}{L^2}\notag \\
&\stackrel{\eqref{eq:phi_3}}{\le}
2r \nor{\frac 12 |\bphi|^2}{L^2} \nor{\nab \bphi}{L^2} \notag \\
&=
r \nor{\bphi}{L^4}^2 \nor{\nab \bphi}{L^2}\label{ineq:phi}\\
&\stackrel{\eqref{eq:Ladyzhenskaya}}{\le} Cr \nor{\bphi}{L^2} \nor{\nab \bphi}{L^2}^2.\notag
\end{align}
Inserting this estimate into \eqref{energydifference*}, we obtain
\begin{align*}
E_r [\bn] - E_r[\boe_3] \geq \left(\frac{1}{2}-Cr\nor{\bphi}{L^2}\right)\int_{\R^2} |\nab \bphi|^2 dx +\frac18\int_{\R^2} |\bphi|^4 dx.
\end{align*}
Now, if we take $\delta=\frac1{4Cr}$, the assumption $\nor{\bn-\boe_3}{L^2}=\nor{\bphi}{L^2}\leq\delta$ yields
\begin{align*}
E_r [\bn] - E_r[\boe_3] \geq \frac{1}{4}\nor{\nabla(\bn-\boe_3)}{L^2}^2 + \frac{1}{8}\nor{\bn-\boe_3}{L^4}^4,
\end{align*}
which implies the desired strict local minimality of $\bn=\boe_3$.
\end{proof}


\subsection{The supercritical case}

We now address the stability in the supercritical case $h>0$. We follow a similar strategy as in the critical case. We estimate the total energy of configurations that are close to $\boe_3$ with respect to the $L^4$-norm (instead of the $L^2$-norm), thus showing that any admissible perturbation leads to a strict increase in energy.

\begin{theorem}\label{thm:h>0}
Let $r>0$ and $h>0$. Then there exists $\delta=\delta(r,h)>0$ such that
$$
E_{r,h}[\bn] > E_{r,h}[\boe_3]
$$
holds for all $\bn\in \boM'$ such that $0<\|\bn-\boe_3\|_{L^4(\R^2)}\leq \delta$.
\end{theorem}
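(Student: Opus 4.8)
The plan is to follow the direct energy-estimate strategy of the critical case (proof of Theorem~\ref{thm:main2_stability}), now exploiting the favourable sign of the Zeeman term. Writing $\bn=\boe_3+\bphi$ and using $Z[\boe_3]=0$ together with \eqref{energydifference*} and \eqref{eq:ZL^2}, I would start from
\[
E_{r,h}[\bn]-E_{r,h}[\boe_3]=\frac12\nor{\nab\bphi}{L^2}^2+r\int_{\R^2}\bphi\cdot\nab\times\bphi\,dx+\frac18\nor{\bphi}{L^4}^4+\frac h2\nor{\bphi}{L^2}^2,
\]
where every term is finite because $\bn\in\boM'$ forces $\bphi\in L^2\cap L^4$ and $\nab\bphi\in L^2$. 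The indefinite Dzyaloshinskii--Moriya term is controlled exactly as in \eqref{ineq:phi}, giving $r\left|\int_{\R^2}\bphi\cdot\nab\times\bphi\,dx\right|\le r\nor{\bphi}{L^4}^2\nor{\nab\bphi}{L^2}$.

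The essential difference from the critical case is that here only $L^4$-smallness is available, which is too weak to absorb the DM term into the Dirichlet and quartic terms alone: a naive Young split of $r\nor{\bphi}{L^4}^2\nor{\nab\bphi}{L^2}$ leaves a term $(\tfrac18-r^2)\nor{\bphi}{L^4}^4$ that has the wrong sign once $r$ is large. The key idea is therefore to let the new term $\tfrac h2\nor{\bphi}{L^2}^2$ supply the missing $L^2$-control that played the role of smallness in the critical case. Concretely, I would first use $\nor{\bphi}{L^4}\le\delta$ to write $r\nor{\bphi}{L^4}^2\nor{\nab\bphi}{L^2}\le r\delta\,\nor{\bphi}{L^4}\nor{\nab\bphi}{L^2}$, then apply Ladyzhenskaya's inequality \eqref{eq:Ladyzhenskaya} to obtain a bound by $Cr\delta\,\nor{\bphi}{L^2}^{1/2}\nor{\nab\bphi}{L^2}^{3/2}$, and finally apply Young's inequality with exponents $(4,4/3)$ to split this into a multiple of $\nor{\bphi}{L^2}^2$ and a multiple of $\nor{\nab\bphi}{L^2}^2$. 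Choosing $\delta=\delta(r,h)$ small enough, these two multiples can be made smaller than $\tfrac14$ and $\tfrac h4$ respectively, so that the pieces are absorbed into $\tfrac12\nor{\nab\bphi}{L^2}^2$ and $\tfrac h2\nor{\bphi}{L^2}^2$. This leaves
\[
E_{r,h}[\bn]-E_{r,h}[\boe_3]\ge\frac14\nor{\nab\bphi}{L^2}^2+\frac h4\nor{\bphi}{L^2}^2+\frac18\nor{\bphi}{L^4}^4,
\]
which is strictly positive whenever $\bphi\not\equiv0$, i.e.\ whenever $0<\nor{\bn-\boe_3}{L^4}\le\delta$.

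The main obstacle I anticipate is conceptual rather than computational: recognising that $L^4$-smallness cannot by itself tame the gradient-linear behaviour of the DM term for large $r$, and that the Zeeman term is exactly the ingredient needed to restore coercivity. Once this is identified, the remaining work---tracking the dependence of the Young and Ladyzhenskaya constants on $r$ and $h$, and verifying that a single $\delta(r,h)$ makes both absorptions simultaneously valid---is routine. A minor point to confirm is that strictness is preserved: since $\bphi\in L^2$, the condition $\bphi\not\equiv0$ forces $\nor{\bphi}{L^2}>0$, so the displayed lower bound is genuinely positive.
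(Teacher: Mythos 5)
Your proposal is correct and follows essentially the same route as the paper: write $\bn=\boe_3+\bphi$, expand the energy difference, bound the DM term by $r\nor{\bphi}{L^4}^2\nor{\nab\bphi}{L^2}$ as in \eqref{ineq:phi}, and use the $L^4$-smallness together with the Zeeman term's $L^2$-coercivity to absorb it. The only (cosmetic) difference is in the absorption step: the paper bounds $\nor{\bphi}{L^4}\le C\nor{\bphi}{H^1}$ and absorbs $Cr\nor{\bphi}{L^4}\nor{\bphi}{H^1}^2$ directly into $\tfrac12\min\{1,h\}\nor{\bphi}{H^1}^2$, whereas you reach the same conclusion via Ladyzhenskaya plus Young's inequality with exponents $(4,4/3)$.
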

\begin{proof}
Let us write $\bn = \boe_3 + \bphi$. Recalling \eqref{eq:ZL^2}, a straightforward computation yields
\begin{multline}\label{energydifference2}
E_{r,h} [\bn] - E_{r,h}[\boe_3] = 
\frac 12 \int_{\R^2} |\nab \bphi|^2 dx 
\\+ r\int_{\R^2} \bphi \cdot \nab \times \bphi dx 
+ \frac 18 \int_{\R^2} |\bphi|^4 dx 
+ \frac{h}2  \int_{\R^2} |\bphi|^2 dx.
\end{multline}
Arguing exactly as in the proof of Theorem~\ref{thm:main2_stability}, we find that \eqref{ineq:phi} holds. By Sobolev embedding, we have $\nor{\bphi}{L^4}\leq C\nor{\bphi}{H^1}$ and hence
$$
r \left| \int_{\R^2} \bphi \cdot \nab \times \bphi dx\right| 
\stackrel{\eqref{ineq:phi}}{\le} r \nor{\bphi}{L^4}^2 \nor{\nab \bphi}{L^2}\le  C r \nor{\bphi}{L^4}\nor{\bphi}{H^1}^2.
$$
Inserting the above estimate into \eqref{energydifference2}, we obtain 
$$
E_{r,h} [\bn] - E_{r,h}[\boe_3] \ge 
\left(\frac12\min\{1,h\} - Cr\nor{\bphi}{L^4} \right)
\nor{\bphi}{H^1}^2
+ \frac 18 \int_{\R^2} |\bphi|^4 dx.
$$
Now, if we take $\delta=\frac{\min\{1,h\}}{4Cr}$, the assumption $\nor{\bn-\boe_3}{L^4}=\nor{\bphi}{L^4}\leq\delta$ yields
\begin{align*}
	E_{r,h} [\bn] - E_{r,h}[\boe_3] \geq \frac{\min\{1,h\}}{4}\nor{\bn-\boe_3}{H^1}^2 + \frac{1}{8}\nor{\bn-\boe_3}{L^4}^4,
\end{align*}
which implies the desired strict local minimality of $\bn=\boe_3$.
\end{proof}

A direct consequence of Theorem~\ref{thm:h>0} and \eqref{eq:Ladyzhenskaya} is the following

\begin{corollary}\label{corollary:stabilityh>0}
Let $r>0$ and $h>0$. Then $\boe_3$ is a strict local minimizer of $E_{r,h}$ in $\boM'$.
\end{corollary}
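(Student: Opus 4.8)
The plan is to deduce the statement directly from Theorem~\ref{thm:h>0} by converting its $L^4$-smallness hypothesis into smallness with respect to the canonical metric $d_{\boM'}$. Recall that $\boe_3$ being a strict local minimizer of $E_{r,h}$ in $\boM'$ means that there exists $\eps>0$ such that $E_{r,h}[\bn]>E_{r,h}[\boe_3]$ for every $\bn\in\boM'$ with $0<d_{\boM'}(\bn,\boe_3)\le\eps$. Since Theorem~\ref{thm:h>0} already supplies a $\delta=\delta(r,h)>0$ controlling the $L^4$-distance, the only task is to bound $\|\bn-\boe_3\|_{L^4}$ from above by $d_{\boM'}(\bn,\boe_3)$.

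First I would apply Ladyzhenskaya's inequality \eqref{eq:Ladyzhenskaya} to $f=\bn-\boe_3$, obtaining
\[
\|\bn-\boe_3\|_{L^4(\R^2)} \le C\|\bn-\boe_3\|_{L^2(\R^2)}^{1/2}\|\nabla(\bn-\boe_3)\|_{L^2(\R^2)}^{1/2}.
\]
By the elementary inequality $a^{1/2}b^{1/2}\le\tfrac12(a+b)$ for $a,b\ge 0$, the right-hand side is bounded by $\tfrac{C}{2}\bigl(\|\bn-\boe_3\|_{L^2}+\|\nabla(\bn-\boe_3)\|_{L^2}\bigr)=\tfrac{C}{2}\,d_{\boM'}(\bn,\boe_3)$. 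Thus, choosing $\eps\colonequals 2\delta/C$ guarantees that $d_{\boM'}(\bn,\boe_3)\le\eps$ implies $\|\bn-\boe_3\|_{L^4}\le\delta$.

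Next I would handle the strictness bookkeeping: if $d_{\boM'}(\bn,\boe_3)>0$, then $\bn\ne\boe_3$ as elements of $\boM'$, whence $\|\bn-\boe_3\|_{L^4}>0$ as well. Consequently, every $\bn\in\boM'$ with $0<d_{\boM'}(\bn,\boe_3)\le\eps$ satisfies $0<\|\bn-\boe_3\|_{L^4}\le\delta$, and Theorem~\ref{thm:h>0} yields $E_{r,h}[\bn]>E_{r,h}[\boe_3]$, which is exactly the asserted strict local minimality in $\boM'$.

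I do not expect any genuine obstacle: the entire analytic content resides in Theorem~\ref{thm:h>0}, and the corollary merely records that the $L^4$-topology statement upgrades to the canonical metric because $d_{\boM'}$ is stronger than the $L^4$-distance, as already observed after \eqref{eq:Ladyzhenskaya}. The only point requiring (minimal) care is to verify that $d_{\boM'}$-proximity is \emph{controlled by}, rather than controls, the $L^4$-distance—precisely the direction furnished by the interpolation inequality.
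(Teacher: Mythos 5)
Your proposal is correct and follows exactly the paper's (one-line) argument: the corollary is deduced from Theorem~\ref{thm:h>0} by noting that Ladyzhenskaya's inequality \eqref{eq:Ladyzhenskaya} bounds $\|\bn-\boe_3\|_{L^4}$ by $d_{\boM'}(\bn,\boe_3)$, so a small $d_{\boM'}$-ball lies inside the $L^4$-ball where the theorem applies. The extra bookkeeping on strictness is accurate and harmless.
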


\subsection{The subcritical case}

We finally address the instability in the subcritical case $h<0$. Here we construct a one‑parameter family of smooth, compactly supported perturbations of $\boe_3$ whose energy is strictly lower.

\begin{theorem}\label{thm:h<0}
Let $r>0$ and $h<0$. Then there exists $\bphi\in C_c^\infty(\R^2:\R^3)$ such that
$$
E_{r,h}\left[\frac{\boe_3+t\bphi}{|\boe_3+t\bphi|}\right] < E_{r,h}[\boe_3] = 0
$$
for all sufficiently small $t\in\R$ with $t\neq0$.
\end{theorem}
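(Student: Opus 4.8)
The plan is to expand $E_{r,h}$ along the normalized ray $\bn_t\colonequals(\boe_3+t\bphi)/|\boe_3+t\bphi|$ in powers of $t$ and to exhibit a profile $\bphi$ for which the quadratic ($t^2$) term is strictly negative; its negativity will then dominate all higher-order contributions once $|t|$ is small. I would take $\bphi=(\phi_1,\phi_2,0)$ to be \emph{horizontal}, smooth and compactly supported, so that $\bn_t=(t\phi_1,t\phi_2,1)/\sqrt{1+t^2(\phi_1^2+\phi_2^2)}$ and the actual difference $\bm{\phi}^{(t)}\colonequals\bn_t-\boe_3$ satisfies $\bm{\phi}^{(t)}=t\bphi+O(t^2)$, with vertical correction $(\bm{\phi}^{(t)})_3=-\tfrac12 t^2|\bphi|^2+O(t^4)$.

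Applying the exact energy identity \eqref{energydifference2} to $\bn=\bn_t$ (with the role of $\bm{\phi}$ there played by $\bm{\phi}^{(t)}$), I would estimate the four terms separately. The Dirichlet term contributes $\tfrac12\int|\nab\bm{\phi}^{(t)}|^2=\tfrac{t^2}{2}\int|\nab\bphi|^2+O(t^4)$, since only the horizontal components enter at order $t$ while $\nab(\bm{\phi}^{(t)})_3=O(t^2)$. The Zeeman-type term gives, via \eqref{eq:ZL^2}, $\tfrac{h}{2}\int|\bm{\phi}^{(t)}|^2=\tfrac{ht^2}{2}\int|\bphi|^2+O(t^4)$. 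The quartic potential $\tfrac18\int|\bm{\phi}^{(t)}|^4$ is $O(t^4)$, reflecting the critical cancellation $V=O(t^4)$. Finally, the helicity term, rewritten through \eqref{eq:H_int_by_parts} as $2r\int(\bm{\phi}^{(t)})_3\big(\rd_1(\bm{\phi}^{(t)})_2-\rd_2(\bm{\phi}^{(t)})_1\big)$, is $O(t^3)$ because the vertical factor is $O(t^2)$ and the curl factor is $O(t)$. Collecting these,
\[
E_{r,h}[\bn_t]=\frac{t^2}{2}\Big(\int_{\R^2}|\nab\bphi|^2\,dx+h\int_{\R^2}|\bphi|^2\,dx\Big)+O(t^3).
\]

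It then remains to choose $\bphi$ so that the bracketed quadratic form is negative, i.e.\ $\int|\nab\bphi|^2<|h|\int|\bphi|^2$ (recall $h<0$). This is exactly where the borderline nature of $h=0$ enters: in two dimensions the Dirichlet energy is scale invariant while the $L^2$ norm is not, so for a fixed nonzero horizontal profile $\bphi_0$ and $\bphi=\bphi_0(\cdot/\lambda)$ one has $\int|\nab\bphi|^2=\int|\nab\bphi_0|^2$ but $\int|\bphi|^2=\lambda^2\int|\bphi_0|^2\to\infty$ as $\lambda\to\infty$. Equivalently, the infimum of the Rayleigh quotient $\int|\nab\bphi|^2/\int|\bphi|^2$ over nonzero $\bphi\in C_c^\infty(\R^2)$ is zero (the bottom of the spectrum of $-\Del$ on $\R^2$). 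Hence for a suitable $\lambda$ the bracket equals a strictly negative constant $-c$, and $E_{r,h}[\bn_t]=-ct^2+O(t^3)<0=E_{r,h}[\boe_3]$ for all sufficiently small $t\neq0$.

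The only genuinely delicate point is the bookkeeping that establishes the displayed expansion: one must verify that \emph{neither} the DM term \emph{nor} the potential produces a term of order $t^2$, so that the sign of the second variation is governed solely by $\int|\nab\bphi|^2+h\int|\bphi|^2$. This is precisely the critical-potential cancellation underlying the $h=0$ threshold, and it is what makes the instability available only once $h<0$ tilts the quadratic form negative. The remaining estimates are routine Taylor expansions in $t$ with the fixed compactly supported $\bphi$, so all error terms are uniformly $O(t^3)$ or better and cannot overcome the strictly negative $t^2$ coefficient.
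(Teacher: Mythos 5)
Your proposal is correct and follows essentially the same route as the paper: a horizontal, compactly supported perturbation $\bphi$, the expansion $E_{r,h}[\bn_t]=\tfrac{t^2}{2}\int(|\nab\bphi|^2+h|\bphi|^2)\,dx+O(t^3)$ (with the DM and quartic terms entering only at order $t^3$ and $t^4$ respectively), and the dilation $\bphi_0(\cdot/\lambda)$ exploiting the scale invariance of the 2D Dirichlet energy to make the quadratic form negative. The only difference is cosmetic ordering — the paper fixes the rescaled profile first and then expands, while you expand first — so there is nothing to add.
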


\begin{proof}
We first claim that there exists $\bphi\in C_c^\infty(\R^2:\R^3)$ such that
\begin{equation}\label{eq:condphi}
\bphi(x)\cdot \boe_3=0\quad (\forall x\in\R^2) \quad\mathrm{and}\quad \int_{\R^2} |\nab \bphi|^2 + h |\bphi|^2 dx <0.
\end{equation}
To see this, let $\bphi_0\in C_c^\infty(\R^2:\R^3)$ be a nonzero vector field with $\bphi_0\cdot \boe_3=0$. For $\la>0$, define
$$
\bphi_\la(x)= \bphi_0 \left(\frac{x}{\la}\right) \quad (\forall x\in\R^2).
$$
Then $\bphi_\la\cdot \boe_3=0$ for every $\la>0$, and by a change of variables we obtain
$$
\int_{\R^2} |\nab \bphi_\la|^2 + h |\bphi_\la|^2 dx = \int_{\R^2} |\nab \bphi|^2 + h\la^2 |\bphi|^2 dx \xrightarrow{\la\to\infty} -\infty.
$$
Thus, by choosing $\bphi=\bphi_\la$ with $\la$ sufficiently large (and fixed), we obtain a vector field that satisfies \eqref{eq:condphi}.

Now fix such a vector field $\bphi$ and, for small $t\in\R$, define
$$
\bn_t = \frac{\boe_3+t\bphi}{|\boe_3+t\bphi|}.
$$
We note that $\bn_t\in \boM'$, since $|\bn_t|=1$ and $\bn_t-\boe_3$ is smooth with compact support.

The first condition in \eqref{eq:condphi} implies that, for sufficiently small $t$, we have
$$
\frac{1}{|\boe_3 + t\bphi|}=1-\frac{t^2}{2}|\bphi|^2+O(t^3),
$$
which in turn yields
$$
\bn_t = \boe_3+ t \bphi -\frac{t^2}{2}|\bphi|^2 \boe_3+ O(t^3).
$$
A straightforward computation then shows that, for sufficiently small $t$,
$$
E_{r,h}[\bn_t]=\frac{t^2}{2}\int_{\R^2}\left(|\nabla \bphi|^2 +h|\bphi|^2\right)dx+O(t^3).
$$
Therefore, in view of the second condition in \eqref{eq:condphi}, we conclude that
$$
E_{r,h}[\bn_t]<E_{r,h}[\boe_3]=0,
$$
for all sufficiently small $t$ with $t\neq 0$. The proof is thus complete.
\end{proof}

The following result is a direct consequence of Theorem~\ref{thm:h<0} and the fact that $dE_{r,h}[\boe_3] = 0$.

\begin{corollary}\label{corollary:stabilityh<0}
Let $r>0$ and $h<0$. Then $\boe_3$ is an unstable critical point of $E_{r,h}$ in $\boM'$.
\end{corollary}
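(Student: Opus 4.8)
The plan is to read Corollary~\ref{corollary:stabilityh<0} as an almost immediate consequence of Theorem~\ref{thm:h<0}, once it is recorded that $\boe_3$ is a critical point of $E_{r,h}$. By an \emph{unstable critical point} I mean a configuration at which the first variation of the energy vanishes but which is not a local minimizer. Accordingly I would split the argument into two parts: (i) verify that $dE_{r,h}[\boe_3]=0$, and (ii) invoke Theorem~\ref{thm:h<0} to produce an admissible direction along which the energy strictly decreases.

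For part (i), the cleanest route is to reuse the Taylor expansion established inside the proof of Theorem~\ref{thm:h<0}. For any $\bphi\in C_c^\infty(\R^2;\R^3)$ with $\bphi\cdot\boe_3=0$, the normalized path $\bn_t=(\boe_3+t\bphi)/|\boe_3+t\bphi|$ expands as $\bn_t=\boe_3+t\bphi-\frac{t^2}{2}|\bphi|^2\boe_3+O(t^3)$, and inserting this into $E_{r,h}$ shows that only the Dirichlet and Zeeman terms contribute at order $t^2$, while $H$ and $V$ enter only at orders $t^3$ and $t^4$ respectively. The resulting expansion $E_{r,h}[\bn_t]=\frac{t^2}{2}\int_{\R^2}(|\nab\bphi|^2+h|\bphi|^2)\,dx+O(t^3)$ contains no term linear in $t$; since every smooth compactly supported perturbation yields such a path (its normal component affecting the expansion only at order $t^2$), the directional derivative of $E_{r,h}$ at $\boe_3$ vanishes along all admissible variations. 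Hence $\boe_3$ is a critical point.

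For part (ii), Theorem~\ref{thm:h<0} supplies a single $\bphi\in C_c^\infty$ for which $E_{r,h}[\bn_t]<E_{r,h}[\boe_3]=0$ for all small $t\neq0$. Since $\bn_t-\boe_3$ is smooth with compact support and $\bn_t\to\boe_3$ with respect to $d_{\boM'}$ as $t\to0$, this exhibits points arbitrarily close to $\boe_3$ in $(\boM',d_{\boM'})$ with strictly smaller energy, so $\boe_3$ is not a local minimizer. Combining (i) and (ii), $\boe_3$ is a critical point that fails to minimize, i.e.\ an unstable critical point, which is the assertion.

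The main obstacle here is conceptual rather than computational: one must fix precise notions of \emph{critical point} and \emph{stability} on the metric space $\boM'$, which is not a smooth manifold. The care lies in specifying the class of admissible variations (smooth, compactly supported, constraint-preserving) and in confirming that the first variation is well defined and vanishes for the helicity and Zeeman contributions, not only for the Dirichlet term where $\nab\boe_3=0$ makes it transparent. Once this framework is fixed, both steps are short, because all the genuine analytic work has already been carried out in Theorem~\ref{thm:h<0}.
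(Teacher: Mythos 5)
Your proposal is correct and matches the paper's own (very brief) justification, which simply cites Theorem~\ref{thm:h<0} together with the fact that $dE_{r,h}[\boe_3]=0$; your part (i) just makes the criticality check explicit via the same Taylor expansion already present in the proof of Theorem~\ref{thm:h<0}, and your part (ii) is exactly the paper's argument.
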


\begin{proof}[Proof of Theorem \ref{thm:main3_staiblity_transition}]
    It follows by Corollaries \ref{corollary:stabilityh>0} and \ref{corollary:stabilityh<0}.
\end{proof}


\appendix

\section{Counterexamples in the endpoint case}\label{sec:couterexample_endpoint}

Here we rigorously verify that substantially new minimizers emerge in the endpoint case $r=1$, in stark contrast to the case $0<r<1$ treated in Section \ref{sec:rigidity}.
In particular, they give counterexamples to the uniqueness for degrees $Q=0,-1$ and also the nonexistence for $Q\geq1$ in Theorem \ref{thm:main4_rigidity}.
We stress that the non-uniqueness in this context means that two maps in $\boM$ do not agree even up to the natural invariances; translations and rotations in the domain, and horizontal rotations in the codomain.
Most of the arguments are strongly inspired by the complex analytic approach developed in \cite{MR4091507}.

We first observe that the same computation as in Lemma \ref{lem:minimizer_necessary} yields the following characterization, the proof of which can be safely omitted.

\begin{lemma}\label{lem:minimality_Bog_r=1}
    Let $k\in\Z$ and $\bn\in\boM_k$.
    Then $\bn$ is a minimizer of $E_1$ in $\boM_k$ if and only if 
    \begin{equation}\label{eq:Bogomol'nyi}
    |\boD^1_1 \bn + \bn\times \boD^1_2 \bn| =0.
    \end{equation}
\end{lemma}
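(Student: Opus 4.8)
The plan is to follow the proof of Lemma~\ref{lem:minimizer_necessary}, while exploiting the fact that the endpoint $r=1$ makes the Bogomol'nyi factorization \eqref{e1.1} degenerate in a favorable way. Specializing \eqref{e1.1} to $r=1$, the coefficient $(1-r^2)$ multiplying the Dirichlet energy vanishes, so that for any $\bn\in\boM_k$, using $Q[\bn]=k$, I obtain the clean identity
$$
E_1[\bn] = 4\pi k + \frac12\int_{\R^2}\bigl|\boD^1_1\bn + \bn\times\boD^1_2\bn\bigr|^2\,dx.
$$
In contrast with the interior case $0<r<1$, the topological lower bound \eqref{e1.2} will play no role: the only surviving term beyond the topological constant is the manifestly nonnegative Bogomol'nyi integral.

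From this identity I read off immediately that $E_1[\bn]\ge 4\pi k$ for every $\bn\in\boM_k$, with equality exactly when $|\boD^1_1\bn + \bn\times\boD^1_2\bn|=0$. To promote this into the desired minimality characterization, I invoke Theorem~\ref{thm:main1_energy_formula}, which gives $\inf_{\boM_k}E_1 = 4\pi k$ for every $k\in\Z$; note that at $r=1$ both branches of that formula (the cases $k\ge0$ and $k<0$) collapse to the single value $4\pi k$, consistently with the lower bound just obtained. Consequently $\bn$ attains the infimum, i.e.\ is a minimizer, if and only if $E_1[\bn]=4\pi k$, and by the displayed identity this is equivalent to the vanishing of the Bogomol'nyi term, namely \eqref{eq:Bogomol'nyi}.

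I expect essentially no obstacle here, which is why the statement can be recorded without a detailed proof: the single factorization identity does all the work, and the endpoint is genuinely \emph{simpler} than the interior case, since the auxiliary inequality \eqref{e1.2} used there to control $D[\bn]$ is no longer needed. The only point requiring a moment's care is the bookkeeping that $\inf_{\boM_k}E_1=4\pi k$ holds uniformly across all degrees, for which the appeal to Theorem~\ref{thm:main1_energy_formula} suffices.
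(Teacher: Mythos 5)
Your argument is correct and is precisely the ``same computation as in Lemma~\ref{lem:minimizer_necessary}'' that the paper invokes when omitting the proof: at $r=1$ the factorization \eqref{e1.1} reduces to $E_1[\bn]=4\pi k+\tfrac12\int_{\R^2}|\boD^1_1\bn+\bn\times\boD^1_2\bn|^2\,dx$, and combining this with $\inf_{\boM_k}E_1=4\pi k$ from Theorem~\ref{thm:main1_energy_formula} gives the equivalence. You also correctly note the one genuine simplification at the endpoint, namely that \eqref{e1.2} is no longer needed since the $(1-r^2)D[\bn]$ term drops out.
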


In addition, if we let $v= \frac{1+n_3}{n_1+in_2}$ with variable $z=x_1+ix_2$ as in Section \ref{sec:rigidity}, then a family of a.e.\ solutions to the Bogomol'nyi equation \eqref{eq:Bogomol'nyi} is given by \eqref{familysol} with meromorphic $f$, as discovered in \cite{MR4091507}.
Recall that if $v$ is smooth up to isolated singularities, then so is the resulting map $\bn$, see \eqref{eq:v_to_n}.
Therefore, inserting any nontrivial meromorphic function $f$ into \eqref{familysol} yields a new minimizer, whenever $\bn$ has the integrability $D[\bn]+V[\bn]<\infty$.
Note that the integrands of $D$ and $V$ are re-expressed as
\begin{align}\label{eq:integrands_v}
\frac 12|\nab \bn|^2 = \frac{2|\nab v|^2}{(1+|v|^2)^2}, \qquad 
\frac 12 (1-n_3)^2 =  \frac{2}{(1+|v|^2)^2}.
\end{align}

In what follows, we consider a special class of meromorphic $f$, namely $f(z)=az^{k}$ with $k\in\Z$ and $a\in\C$, as well as the associated map as in \eqref{eq:v_to_n}, i.e.,
\begin{equation}\label{eq:general_formula_n_and_v_for_f=az^k}
    \bn^a=\left(\frac{2\Re(v)}{|v|^2+1},\frac{-2\Im(v)}{|v|^2+1},\frac{|v|^2-1}{|v|^2+1}\right), \qquad v \colonequals -\frac{i}{2}\overline{z} + az^{k}.
\end{equation}
Note that the power $k$ of $f$ will be strongly related to the topological degree of $\bn^a$, but generally they do not agree exactly.
One may generate other examples by considering general meromorphic functions, which is omitted in the present argument. Readers interested in this direction can refer to \cite{MR4091507}.

\subsection{The case \texorpdfstring{$k=1$}{k=1}}

As observed in \cite{MR4091507}, the integrability issue is more delicate when the meromorphic function $f$ has linear growth at infinity.
Therefore, we first address the delicate case $k=1$, i.e., $f(z)= az$ with $a=a_1+ia_2\in\C$. 
Then $v$ in \eqref{eq:general_formula_n_and_v_for_f=az^k} can be written as 
\begin{align*} 
v= X_1^a + iX_2^a,&& X^a = 
\begin{pmatrix} 
X_1^a \\
X_2^a 
\end{pmatrix} 
= 
\begin{pmatrix} 
a_1 & -\frac12-a_2 \\ 
-\frac12+a_2 & a_1 
\end{pmatrix} 
\begin{pmatrix} 
x_1 \\
x_2 
\end{pmatrix},
\end{align*} 
and the associated map $\bn^a$ can be expressed by the \emph{distorted skyrmion} (cf.\ \cite{MR4091507}) 
\begin{align}\label{eq:n^a_k=1}
\bn^a = \left( \frac{2X_1^a}{|X^a|^2+1}, \frac{-2X_2^a}{|X^a|^2+1}, \frac{|X^a|^2-1}{|X^a|^2+1} \right). 
\end{align} 
Note that the $a=0$ case corresponds to the standard skyrmion $\bh^1$, while for $a\neq0$ the map $\bn^a$ is clearly distinct from $\bh^1$ as the domain coordinates are linearly distorted. 
In particular, the anti-skyrmion $\tilde{\bh}^1$ can formally be approached by $\bn^{a}(\frac{x}{|a|})$ with $a=(a_1,0)$ as $a_1\to-\infty$. 

These maps $\bn^a$ provide new minimizers for degrees $Q=\pm1$.

\begin{proposition}
    Let $a\in\C$ and $\bn^a$ be the map defined in \eqref{eq:n^a_k=1}.
    If $|a|<\frac12$, then $\bn^a\in \boM_{-1}$, and if $|a|>\frac12$, then $\bn^a\in \boM_{1}$.
    In particular, all members of the family $\{\bn^a\}_{|a|<\frac12}$ (resp.\ $\{\bn^a\}_{|a|>\frac12}$) are minimizers of $E_1$ in $\boM_{-1}$ (resp.\ $\boM_1$).
\end{proposition}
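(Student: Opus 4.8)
The plan is to verify three things: that $\bn^a$ lies in $\boM$ (integrability), that its topological degree is $-1$ when $|a|<\tfrac12$ and $+1$ when $|a|>\tfrac12$, and that it is a minimizer of $E_1$ within its degree class. The last point will be essentially free. Since $f(z)=az$ is entire, the coordinate $v$ in \eqref{eq:general_formula_n_and_v_for_f=az^k} is exactly of the admissible form \eqref{familysol} (with $r=1$), so it solves the transformed Bogomol'nyi equation ($\rd_{\ovl z}v=-\tfrac i2$), and the map $\bn^a$ reconstructed from $v$ via \eqref{eq:v_to_n} is smooth on all of $\R^2$ because the denominator $|v|^2+1$ never vanishes and $f$ has no poles. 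Hence \eqref{eq:Bogomol'nyi} holds classically, and once $\bn^a\in\boM_{\mp1}$ is known, Lemma \ref{lem:minimality_Bog_r=1} immediately yields minimality. So the real content is integrability and the degree.

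For integrability I would work with the matrix $M_a$ defining $X^a=M_a x$, whose determinant is $\det M_a=a_1^2+a_2^2-\tfrac14=|a|^2-\tfrac14$. This is precisely where the case distinction enters: when $|a|\neq\tfrac12$ the matrix $M_a$ is invertible, so $|v|=|X^a|=|M_a x|\ge c|x|$ for some constant $c=c(a)>0$, while $|\nab v|^2=\|M_a\|^2$ is itself a constant. Inserting these into the density formulas \eqref{eq:integrands_v} gives $V[\bn^a]\le\int_{\R^2}\frac{2\,dx}{(1+c^2|x|^2)^2}<\infty$ and an analogous bound for $D[\bn^a]$, so $\bn^a\in\boM$. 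This is exactly the delicacy the text flags for linearly growing $f$: if instead $|a|=\tfrac12$, then $\det M_a=0$, the coordinate $v$ degenerates along a line, the integrand $1/(1+|v|^2)^2$ fails to decay along that direction, and integrability breaks down — which is why that value is excluded.

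For the degree I would write $\bn^a=\bom\circ M_a$, where $\bom(X)=\frac1{|X|^2+1}(2X_1,-2X_2,|X|^2-1)$ is the fixed, $a$-independent profile. By the chain rule, $\rd_1\bn^a\times\rd_2\bn^a=(\det M_a)\,(\rd_{X_1}\bom\times\rd_{X_2}\bom)\circ M_a$, so after the substitution $X=M_a x$ (with Jacobian $|\det M_a|$) the integral defining $Q$ collapses to $Q[\bn^a]=\sign(\det M_a)\,Q[\bom]$. Rather than evaluating $Q[\bom]$ by hand, I would fix the sign by bootstrapping from the known case: at $a=0$ one has $\bn^0=\bh^1$, whose degree is $-1$, and $\det M_0=-\tfrac14<0$, whence $Q[\bom]=Q[\bh^1]/\sign(\det M_0)=(-1)/(-1)=+1$. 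Therefore $Q[\bn^a]=\sign(\det M_a)=\sign(|a|^2-\tfrac14)$, which is $-1$ for $|a|<\tfrac12$ and $+1$ for $|a|>\tfrac12$.

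The main obstacle is not a single hard estimate but the careful sign bookkeeping in the degree computation: one must correctly track the minus sign in the second component of \eqref{eq:v_to_n} (equivalently, the anti-holomorphic character of the stereographic coordinate) together with the orientation reversal recorded by $\sign(\det M_a)$. Bootstrapping from $\bn^0=\bh^1$ avoids an explicit evaluation of $\int_{\R^2}\bom\cdot(\rd_{X_1}\bom\times\rd_{X_2}\bom)\,dX$ and removes any ambiguity in the overall sign. With the degree and integrability established, membership of $\bn^a$ in $\boM_{-1}$ (resp.\ $\boM_1$), combined with Lemma \ref{lem:minimality_Bog_r=1}, completes the proof.
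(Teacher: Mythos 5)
Your proposal is correct and follows essentially the same route as the paper: integrability via the linear change of variables $X^a=M_ax$ with $\det M_a=|a|^2-\tfrac14\neq0$ and constant $|\nabla v|^2$, the degree reducing to $\sign(\det M_a)$ times a fixed degree, and minimality from Lemma \ref{lem:minimality_Bog_r=1}. The only (harmless) difference is that you pin down the sign by bootstrapping from $Q[\bh^1]=-1$ at $a=0$, whereas the paper evaluates $Q$ directly via the density $\frac{1}{\pi}\frac{|\rd_z v|^2-|\rd_{\ovl z}v|^2}{(1+|v|^2)^2}$ with $|\rd_z v|^2-|\rd_{\ovl z}v|^2=|a|^2-\tfrac14$.
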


\begin{proof}
Since 
$|\nab v|^2 = 2(|\rd_z v|^2+ |\rd_{\ovl z} v|^2)=2|a|^2 + \frac 12$, 
the integrands in \eqref{eq:integrands_v} are given by
\begin{align*}
\frac{2|\nab v|^2}{(1+|v|^2)^2} 
= \frac{4|a|^2+1}{(1+|X^a|^2)^2}, \qquad \frac{2}{(1+|v|^2)^2}
=
 \frac{2}{(1+|X^a|^2)^2}.
\end{align*}
The transformation $x\mapsto X^a$ is linear with $\det\frac{\rd X^a}{\rd x}=|a|^2-\frac{1}{4}\neq0$ (since $|a|\neq\tfrac12$).
Hence these terms are integrable in $x$ if and only if they are integrable in $X^a$, implying that $\bn^a\in\boM$.

Moreover, direct computation gives 
\begin{align*}
Q[\bn^a]
=
\int_{\R^2} \frac{1}{\pi} 
\frac{|\rd_z v|^2 - |\rd_{\ovl z}v|^2}{(1+|v|^2)^2}dx
&=
\int_{\R^2} \frac{1}{4\pi}\frac{4|a|^2-1}{(1+|X^a|^2)^2}dx.
\end{align*}
Using $dx=|\det(\rd X^a/\rd x)|^{-1}
=\frac{4}{|4|a|^2-1|}$, we obtain
\begin{align*}
Q[\bn^a]
&=
\frac{4|a|^2-1}{|4|a|^2-1|}
\int_{\R^2}\frac{1}{\pi}\frac{1}{(1+|X^a|^2)^2}dX^a
=
\begin{cases}
    -1 & (|a|<\frac12),\\
    1 & (|a|>\frac12).
\end{cases}
\end{align*}
Since the minimality of $\bn^a$ follows from Lemma \ref{lem:minimality_Bog_r=1}, the proof is complete.
\end{proof}

\subsection{The case \texorpdfstring{$k\neq1$}{k≠1}}

Now we turn to the case $k\neq1$.
In this case we have the following general result.

\begin{proposition}
    Let $f=p/q$ be a meromorphic function with coprime polynomials $p$ and $q$ such that $\deg(p)\neq\deg(q)+1$.
    Let $\bn$ be the associated map with $v=-\frac{i}{2}\ovl{z}+f(z)$ via $v=\frac{1+n_3}{n_1+in_2}$.
    \begin{itemize}
        \item If $\deg(p)>\deg(q)+1$, then $\bn\in\boM_{\deg(p)}$ and $E_1[\bn]=4\pi\deg(p)$.
        \item If $\deg(p)<\deg(q)+1$, then $\bn\in\boM_{\deg(q)-1}$ and $E_1[\bn]=4\pi(\deg(q)-1)$.
    \end{itemize}
\end{proposition}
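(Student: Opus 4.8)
The strategy is to analyze the map $\bn$ associated to $v=-\frac{i}{2}\ovl{z}+f(z)$ with $f=p/q$ by studying its behavior at the finite poles of $f$ (the zeros of $q$) and at infinity, since these are the only points where the topological degree and integrability can concentrate. By Lemma \ref{lem:minimality_Bog_r=1}, once I verify $\bn\in\boM_m$ for the claimed degree $m$, minimality is automatic and $E_1[\bn]=4\pi m$ follows from the energy formula of Theorem \ref{thm:main1_energy_formula} (noting $m\geq1$ in both cases here, so $\inf_{\boM_m}E_1=4\pi m$). Thus the entire content reduces to two computations: \emph{integrability} ($D[\bn]+V[\bn]<\infty$) and \emph{degree} ($Q[\bn]=m$).

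\textbf{Integrability.} Using the expressions \eqref{eq:integrands_v}, both integrands decay like $(1+|v|^2)^{-2}$ times a factor (either $|\nab v|^2$ or constant). At a finite pole $z_0$ of $f$ (a zero of $q$), we have $|v|\to\infty$, so $(1+|v|^2)^{-2}$ vanishes to high order and kills any local singularity of $|\nab v|^2$; hence both integrands are locally integrable near each pole. The delicate region is the behavior as $|z|\to\infty$. When $\deg(p)>\deg(q)+1$, the holomorphic part dominates and $|v|\sim|z|^{\deg(p)-\deg(q)}$ grows, forcing fast decay of the integrands; when $\deg(p)<\deg(q)+1$, the anti-holomorphic term $-\frac{i}{2}\ovl z$ dominates and $|v|\sim\frac12|z|$, which is exactly the linear growth that makes $(1+|v|^2)^{-2}\sim|z|^{-4}$ integrable against the area element. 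The excluded case $\deg(p)=\deg(q)+1$ is precisely where these two linear contributions can cancel, which is why it is omitted (it mirrors the subtle $k=1$ case handled separately).

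\textbf{Degree.} The degree is computed from
\[
Q[\bn]=\int_{\R^2}\frac{1}{\pi}\frac{|\rd_z v|^2-|\rd_{\ovl z}v|^2}{(1+|v|^2)^2}\,dx,
\]
where $\rd_{\ovl z}v=-\frac{i}{2}$ and $\rd_z v=f'(z)$. I would evaluate this via the standard degree-counting interpretation: $Q[\bn]$ equals the signed number of preimages of a generic point on the Riemann sphere under the map $\bn$, or equivalently compute it as a boundary integral over a large circle $|z|=R$ plus small circles around the poles, letting $R\to\infty$. In the regime $\deg(p)>\deg(q)+1$, the holomorphic term controls the winding and yields $Q=\deg(p)$; in the regime $\deg(p)<\deg(q)+1$, the anti-holomorphic term controls the winding at infinity, while each pole of order contributing through $q$ adjusts the count, giving $Q=\deg(q)-1$. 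The cleanest route is to use the known identity expressing $Q$ as the integral of the pullback of the area form and to track the degree of the rational-type map $v$ on the sphere via the argument principle applied separately to the dominant term in each regime.

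\textbf{Main obstacle.} The principal difficulty is the degree computation, specifically justifying the exact integer values by correctly accounting for the interplay between the meromorphic function $f$ and the anti-holomorphic term $-\frac{i}{2}\ovl z$. Unlike a purely (anti-)holomorphic map, $v$ is a genuine mix, so $Q$ is \emph{not} simply the degree of $f$; the sign of $|\rd_z v|^2-|\rd_{\ovl z}v|^2$ can vary over $\R^2$, and one must carefully identify which term dominates the winding at infinity versus near the poles. I expect the rigorous bookkeeping — combining the argument-principle contribution at infinity with the local contributions at the zeros of $q$, and confirming the asserted shift by $-1$ in the anti-holomorphic-dominated case — to be the crux, following the complex-analytic framework of \cite{MR4091507}.
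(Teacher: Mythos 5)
Your integrability argument is essentially the paper's: reduce to the behavior at infinity (the finite poles of $f$ are harmless, which the paper makes precise by passing to the south-pole projection $w=\frac{n_1+in_2}{1+n_3}$, near which $\bn$ is manifestly smooth), and observe that in either regime $|v|$ grows at least linearly so the integrands in \eqref{eq:integrands_v} decay like $O(|z|^{-4})$. That part is fine. The genuine gap is in the degree computation, which is the entire quantitative content of the proposition and which you yourself flag as the unproven ``crux.'' The heuristics you offer --- ``the holomorphic term controls the winding and yields $Q=\deg(p)$,'' ``each pole \dots adjusts the count, giving $Q=\deg(q)-1$'' --- are precisely the statements that need proof; the shift by $-1$ in the anti-holomorphically dominated regime and the correct signed bookkeeping at the zeros of $q$ are not consequences of the argument principle (which does not apply, since $v$ is neither holomorphic nor anti-holomorphic), and integrating $\frac{1}{\pi}\frac{|\rd_z v|^2-|\rd_{\ovl z}v|^2}{(1+|v|^2)^2}$ directly for a general rational $f$ is not tractable by the change-of-variables trick that works for $f(z)=az$. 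As written, the proposal establishes $\bn\in\boM$ and that $\bn$ is a minimizer in its own degree class, but not which class that is.

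The paper sidesteps the direct degree computation entirely by running your logic in reverse: it \emph{computes the energy first} and then reads off the degree from $E_1[\bn]=4\pi Q[\bn]$ (valid once minimality is known from Lemma \ref{lem:minimality_Bog_r=1}). Concretely, it imports the value $E_\mathrm{BRS}[\bn]=4\pi\max\{\deg(p),\deg(q)+1\}$ from \cite[Lemma 4.2]{MR4091507}, relates $E_\mathrm{BRS}$ to $E_1$ via the divergence-type correction \eqref{eq:BRS_energy}, and evaluates $\int_{\R^2}\boe_3\cdot(\nabla\times\bn)\,dx$ as a limit of circle integrals $\Re\oint_{|z|=R}\frac{2v}{1+|v|^2}dz$, which gives $0$ when $\deg(p)>\deg(q)+1$ and $8\pi$ when $\deg(p)<\deg(q)+1$. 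That single asymptotic boundary computation is what replaces all of your winding-number bookkeeping. If you want to complete your route instead, you would need an honest signed count of preimages of a regular value of $v:\C\cup\{\infty\}\to\C\cup\{\infty\}$, including the local degree at $\infty$; this is doable but is real work, not a corollary of ``which term dominates.'' One further small error: your parenthetical ``$m\geq 1$ in both cases'' is false --- in the second case $\deg(q)-1$ can be $0$ or $-1$ (e.g.\ $f$ constant gives the translated skyrmion) --- though the identity $\inf_{\boM_k}E_1=4\pi k$ does hold for every $k\in\Z$ at $r=1$, so the conclusion you draw from it survives.
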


\begin{proof}
    This follows almost directly from \cite[Lemma 4.2]{MR4091507} up to considering the correction term in the helicity $H$.
    For the reader's convenience we sketch the argument.

    To show the integrability of \eqref{eq:integrands_v}, we first note that as $f$ is meromorphic, the issue is reduced to the behavior at infinity; indeed, although $v$ may have finitely many poles, in terms of the projection from the southpole $w\colonequals \frac{n_1+in_2}{1+n_3}$, 
    the associated map $\bn$ is clearly smooth around the poles of $v$ (where $w=0$) since
    \[
    n_1+in_2=\frac{2w}{1+|w|^2}, \qquad n_3=\frac{1-|w|^2}{1+|w|^2}.
    \]
    Let $m\colonequals \deg(p)$ and $d\colonequals \deg(q)$.
    As $|z|\to\infty$, we observe that
    \[
    |v|^2 =\left|-\frac{i}2\ovl{z} +f(z)\right|^2 \sim |z|^{\max\{2,2(m-d)\}}
    \]
    thanks to $m-d\neq1$, while
    \[
    |\nabla v|^2=2(|\rd_z v|^2 + |\rd_{\ovl z}v|^2)=O(|z|^{2(m-d-1)})+\frac{1}{2}.
    \]
    Hence the integrands in \eqref{eq:integrands_v} always have the decay $O(|z|^{-4})$, yielding the desired integrability $\bn\in\boM$.

    The integrability together with Lemma \ref{lem:minimality_Bog_r=1} implies that $\bn$ is a minimizer of $E_1$ in the class $\boM_{Q[\bn]}$, i.e., $E[\bn]=4\pi Q[\bn]$.

    The remaining task is to compute the energy $E_1$ of $\bn$, which also determines the topological degree of $\bn$ through the relation $E[\bn]=4\pi Q[\bn]$.
    Now, let $E_\mathrm{BRS}$ denote the total energy in \cite{MR4091507} with $\alpha=0$ and $\kappa=1$.
    This energy agrees with our total energy $E_r$ with $r=1$ up to the correction term in the helicity $H$; more precisely, for $\bn\in\boM$,
    \begin{align}\label{eq:BRS_energy}
        E_\mathrm{BRS}[\bn] = E_1[\bn] + \int_{\R^2} \boe_3\cdot(\nabla\times\bn)dx,
    \end{align}
    whenever the last term is well defined (as an improper integral).
    In addition, in \cite[Lemma 4.2]{MR4091507} the energy $E_\mathrm{BRS}$ for the map $\bn$ under consideration is computed as
    \[
    E_\mathrm{BRS}[\bn]=4\pi\max\{m,d+1\}.
    \]
    Therefore it suffices to show that
    \begin{align*}
        \int_{\R^2} \boe_3\cdot(\nabla\times\bn)dx =
        \begin{cases}
            0 & (m>d+1),\\
            8\pi & (m<d+1).
        \end{cases}
    \end{align*}
    This follows since we compute
    \begin{align*}
        \int_{\R^2} \boe_3\cdot(\nabla\times\bn)dx = \lim_{R\to\infty}\oint_{|x|=R}(n_1dx_1+n_2dx_2) = \lim_{R\to\infty}\Re\oint_{|z|=R}\frac{2v}{1+|v|^2}dz,
    \end{align*}
    and if $m>d+1$ then the integral vanishes as $O(R^{-(m-d)}\cdot R)\to0$, whereas if $m<d+1$ then the term $-\frac{i}{2}\ovl{z}$ is dominant and hence the integrand is given by $\frac{iz}{1+R^2/4}$ up to a small error term, yielding the limit $\frac{R}{1+R^2/4}\cdot 2\pi R \to 8\pi$.
\end{proof}

The above proposition yields the following direct corollaries.

\begin{corollary}[Case $k\geq2$]
    Let $a\in\C$ with $a\neq0$ and $k\in\Z$ with $k\geq2$.
    Let $\bn^a$ be the map defined in \eqref{eq:general_formula_n_and_v_for_f=az^k}.
    Then all members of the family $\{\bn^a\}_{|a|>0}$ are minimizers of $E_1$ in $\boM_k$.
\end{corollary}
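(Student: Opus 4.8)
The plan is to recognize the corollary as the special case $f(z)=az^{k}$ of the preceding Proposition. First I would write $f=p/q$ with $p(z)=az^{k}$ and $q(z)\equiv1$; since $a\neq0$ these are coprime polynomials with $\deg(p)=k$ and $\deg(q)=0$. As $k\geq2$, we have $\deg(p)=k>1=\deg(q)+1$, so in particular $\deg(p)\neq\deg(q)+1$ and we fall precisely into the first case of that Proposition. Note also that the map $\bn^a$ in \eqref{eq:general_formula_n_and_v_for_f=az^k} is exactly the map associated with $v=-\frac{i}{2}\overline{z}+f(z)$ treated there.

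Applying the first case of the Proposition then yields $\bn^a\in\boM_{\deg(p)}=\boM_k$ together with the energy identity $E_1[\bn^a]=4\pi\deg(p)=4\pi k$. On the other hand, Theorem \ref{thm:main1_energy_formula} at $r=1$ gives $\inf_{\boM_k}E_1=4\pi k$ for every $k\geq0$. Hence $E_1[\bn^a]=\inf_{\boM_k}E_1$, so each $\bn^a$ with $a\neq0$ attains the infimum and is a minimizer of $E_1$ in $\boM_k$; equivalently, this minimality is already encoded in the Proposition through the relation $E_1[\bn]=4\pi Q[\bn]$ coming from Lemma \ref{lem:minimality_Bog_r=1}.

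Since the entire argument reduces to checking the degree condition $\deg(p)>\deg(q)+1$ for the polynomial $f(z)=az^{k}$, there is no genuine obstacle beyond this bookkeeping: the substantive content—the integrability $\bn^a\in\boM$, the evaluation of the energy, and the minimality—is supplied by the preceding Proposition and ultimately rests on the complex-analytic classification of \cite{MR4091507}. The only point warranting a moment's care is that $a\neq0$ is essential (it is what makes $\deg(p)=k$ rather than $\deg(p)<k$), since the degenerate choice $a=0$ would instead return $v=-\frac{i}{2}\overline{z}$, i.e.\ the standard skyrmion $\bh^1\in\boM_{-1}$, which lies outside the regime of this corollary.
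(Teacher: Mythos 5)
Your proposal is correct and is exactly the argument the paper intends: the corollary is stated as a direct consequence of the preceding proposition, obtained by taking $p(z)=az^{k}$, $q\equiv1$ and noting $\deg(p)=k>1=\deg(q)+1$, so that $\bn^a\in\boM_k$ with $E_1[\bn^a]=4\pi k=\inf_{\boM_k}E_1$ (the minimality being already built into the proposition via Lemma \ref{lem:minimality_Bog_r=1}). Your remark on the degeneracy at $a=0$ is a sensible sanity check and consistent with the paper.
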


\begin{corollary}[Case $k\leq0$]
    Let $a\in\C$ with $a\neq0$ and $k\in\Z$ with $k\le 0$. 
    Let $\bn^a$ be the map defined in \eqref{eq:general_formula_n_and_v_for_f=az^k}. 
    Then all members of the family $\{\bn^a\}_{|a|>0}$ are minimizers of $E_1$ in $\boM_{-k-1}$. 
\end{corollary}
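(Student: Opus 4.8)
The plan is to obtain this corollary as a direct specialization of the preceding Proposition to the meromorphic function $f(z) = az^k$ with $a \neq 0$ and $k \le 0$. Essentially everything reduces to checking that $f$ satisfies the Proposition's hypotheses and then reading off the degree and energy from the appropriate alternative.

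First I would write $f$ as a ratio of coprime polynomials. Since $k \le 0$, we have $f(z) = a/z^{-k}$, so one takes $p(z) = a$ and $q(z) = z^{-k}$, whence $\deg(p) = 0$ and $\deg(q) = -k \ge 0$. Because $a \neq 0$, the numerator $p$ is a nonzero constant and hence coprime to the monomial $q$, and the nondegeneracy requirement $\deg(p) \neq \deg(q) + 1$ becomes $0 \neq -k + 1$, which holds as $k \le 0 < 1$. Thus the Proposition applies. Comparing $\deg(p) = 0$ with $\deg(q) + 1 = -k + 1 \ge 1$ places us in the regime $\deg(p) < \deg(q) + 1$, and the second alternative of the Proposition yields at once $\bn^a \in \boM_{\deg(q) - 1} = \boM_{-k-1}$ together with $E_1[\bn^a] = 4\pi(\deg(q) - 1) = 4\pi(-k-1)$.

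To conclude minimality I would invoke Theorem \ref{thm:main1_energy_formula}: at $r = 1$ it gives $\inf_{\boM_j} E_1 = 4\pi j$ for every $j \in \Z$, so in particular the minimal energy in $\boM_{-k-1}$ equals $4\pi(-k-1)$. Since $\bn^a$ attains exactly this value it is a minimizer; equivalently one may appeal directly to Lemma \ref{lem:minimality_Bog_r=1}, as $\bn^a$ solves the Bogomol'nyi equation by construction via \eqref{familysol} with $r = 1$. As $a \neq 0$ is arbitrary, the whole family $\{\bn^a\}_{|a|>0}$ consists of minimizers in $\boM_{-k-1}$.

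I do not expect a genuine analytic obstacle, since the Proposition already carries out the delicate integrability analysis and the degree and energy computations for arbitrary meromorphic $f$ (including the pole of $v$ at the origin when $k < 0$, which is handled there via the southpole projection). The only points requiring care are therefore bookkeeping: correctly identifying $\deg(q) = -k$ for the pole at the origin, and checking the boundary case $k = 0$, where $f \equiv a$ is constant, $q = 1$, and $\deg(q) - 1 = -1$, so that $\bn^a \in \boM_{-1}$ and the second alternative of the Proposition still applies verbatim.
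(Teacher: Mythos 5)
Your proposal is correct and matches the paper's intent exactly: the paper states this corollary without proof as a direct consequence of the preceding proposition, and your specialization ($p=a$, $q=z^{-k}$, hence $\deg(p)=0<\deg(q)+1=-k+1$, so the second alternative gives $\bn^a\in\boM_{-k-1}$ with $E_1[\bn^a]=4\pi(-k-1)$, which is the minimal value in $\boM_{-k-1}$ at $r=1$) is precisely the intended bookkeeping. The closing appeal to Theorem \ref{thm:main1_energy_formula} (or equivalently Lemma \ref{lem:minimality_Bog_r=1}) for minimality is also exactly what the proposition's own proof uses.
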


Now we gain insight into the geometric structure of the map $\bn^a=(n^a_1,n^a_2,n^a_3)$ defined in \eqref{eq:general_formula_n_and_v_for_f=az^k}, focusing on the preimages of the southpole and the equator.
More precisely, we consider
\[
Z_0=\{ n^a_3=-1 \}=\{z\in\C \mid |v|=0 \}, \quad Z_1=\{n^a_3=0\}=\{z\in\C \mid |v|=1 \}.
\]
(The northpole is not attained for $k\geq0$ and is attained only at the origin for $k\leq-1$.)
In what follows we will often use the formulae 
\begin{align}
    v(re^{i\theta}) &= -\frac{i}{2}re^{-i\theta}+ar^ke^{ik\theta}, \label{eq:v_formula} \\
    |v(re^{i\theta})|^2 &= \frac{1}{4}r^2+|a|^2r^{2k}-|a|r^{k+1}\sin\big( (k+1)\theta+\arg{a} \big). \label{eq:|v|^2_formula}
\end{align}

Since the case $k=0$ corresponds to translated skyrmions (as already discussed in Section \ref{sec:rigidity}), we only discuss $k\geq2$ and $k\leq-1$.
We first discuss the special case $k=-1$ and then address the general case $|k|\geq2$, assuming $a\neq0$ throughout.

\subsubsection{The case $k=-1$}
In this special case, the geometry of the sets $Z_0$ and $Z_1$ is rather simple.
Let $a=a_1+ia_2\neq0$.
By \eqref{eq:v_formula} we have $z\in Z_0$ if and only if $2a=i|z|^2$.
Hence
\begin{align*}
    Z_0=
    \begin{cases}
       \{ |z|^2=2a_2 \} & (a_1=0,\ a_2>0),\\
       \emptyset & (\text{otherwise}).
    \end{cases}
\end{align*}
By \eqref{eq:|v|^2_formula} we have $z\in Z_1$ if and only if $\frac{1}{4}|z|^4-(\Im{a}+1)|z|^2+|a|^2=0$.
Solving the equation yields
\begin{align*}
    Z_1=
    \begin{cases}
       \big\{ |z|^2=2 ( a_2+1 \pm \sqrt{2a_2+1-a_1^2} ) \big\} & (a_2>\frac{1}{2}a_1^2-\frac{1}{2}),\\
       \{ |z|^2=2(a_2+1) \} & (a_2=\frac{1}{2}a_1^2-\frac{1}{2}),\\
       \emptyset & (a_2<\frac{1}{2}a_1^2-\frac{1}{2}).
    \end{cases}
\end{align*}
Thus the set $Z_0\cup Z_1$ consists of at most three concentric circles.
In particular, if the circle $Z_0$ is present, then $Z_1$ always has two circles surrounding $Z_0$.

\subsubsection{The case $k\geq2$}
By \eqref{eq:v_formula} the set $Z_0$ is computed as
\[
Z_0=\{0\}\cup\{\gamma e^{i\frac{2\pi j}{k+1}} \mid j=0,1,\dots,k\},
\]
where $\gamma\colonequals (2|a|)^{-\frac{1}{k-1}}e^{i\frac{\pi/2-\arg{a}}{k+1}}\in\C$.
The nonzero part of $Z_0$ consists of $k+1$ points uniformly distributed on the circle $\{|z|=(2|a|)^{-\frac{1}{k-1}}\}$.
On the other hand, by \eqref{eq:|v|^2_formula} the set $Z_1$ turns out to be a nontrivial planar curve with $(k+1)$-fold rotational symmetry given by
\begin{align}\label{eq:Z_1}
    Z_1=\left\{ re^{i\theta} \in \C^\times \,\left|\, |a|r^{k-1}+\frac{r^{-k+1}}{4|a|}-\frac{r^{-k-1}}{|a|} = \sin\big( (k+1)\theta + \arg{a} \big) \right\}\right.,
\end{align}
The set $Z_0\cup Z_1$ exhibits qualitatively distinct shapes depending on the parameter $a$, see Figure~\ref{k=2} for $k=2$ and Figure~\ref{k=5} for $k=5$.

\begin{figure}[htbp]
	\centering
	\begin{subfigure}{0.32\textwidth}
		\includegraphics[width=\linewidth]{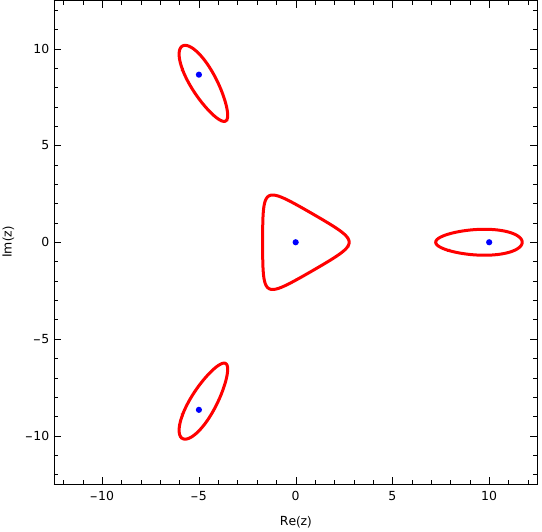}
		\caption{$a=i/20$}
	\end{subfigure}
	\begin{subfigure}{0.32\textwidth}
		\includegraphics[width=\linewidth]{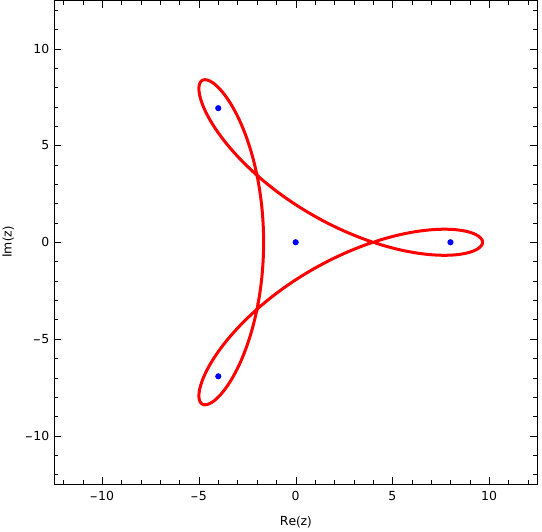}
		\caption{$a=i/16$ ($=ia_*$)}
	\end{subfigure}
	\begin{subfigure}{0.32\textwidth}
		\includegraphics[width=\linewidth]{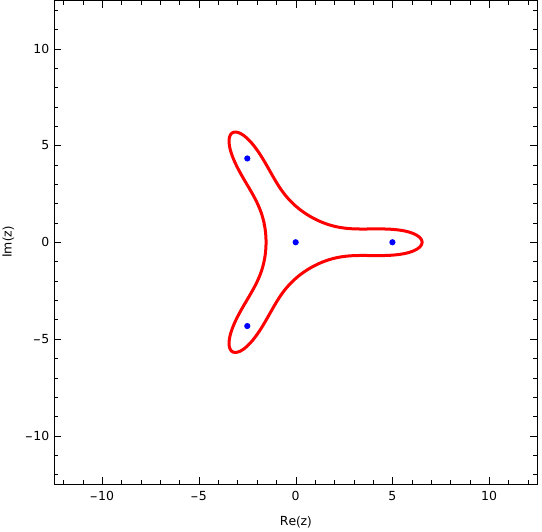}
		\caption{$a=i/10$}
	\end{subfigure}
	\caption{Plots of $Z_0$ (blue) and $Z_1$ (red) for $v=-\frac{i}{2}\overline{z} + az^{2}$ with different values of $a$.}
    \label{k=2}
\end{figure}

\begin{figure}[htbp]
	\centering
	\begin{subfigure}{0.32\textwidth}
		\includegraphics[width=\linewidth]{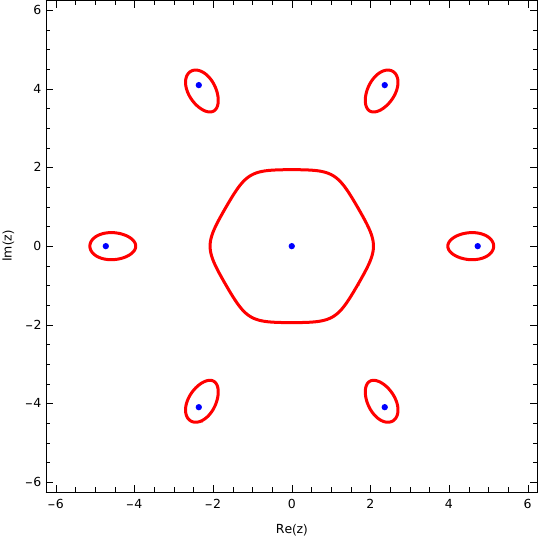}
		\caption{$a=i/1000$}
	\end{subfigure}
	\begin{subfigure}{0.32\textwidth}
		\includegraphics[width=\linewidth]{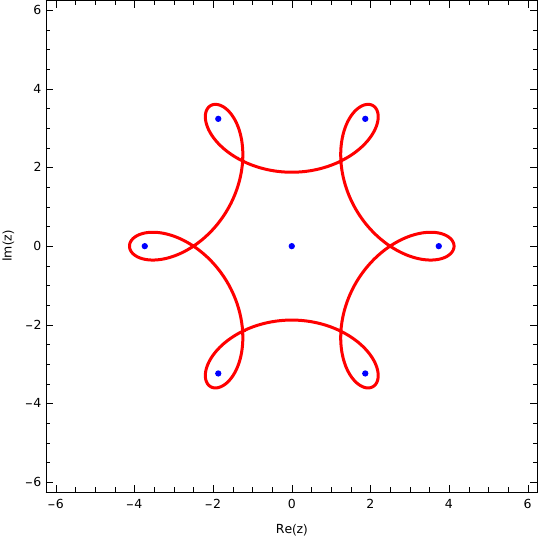}
		\caption{$a=8i/3125$ ($=ia_*$)}
	\end{subfigure}
	\begin{subfigure}{0.32\textwidth}
		\includegraphics[width=\linewidth]{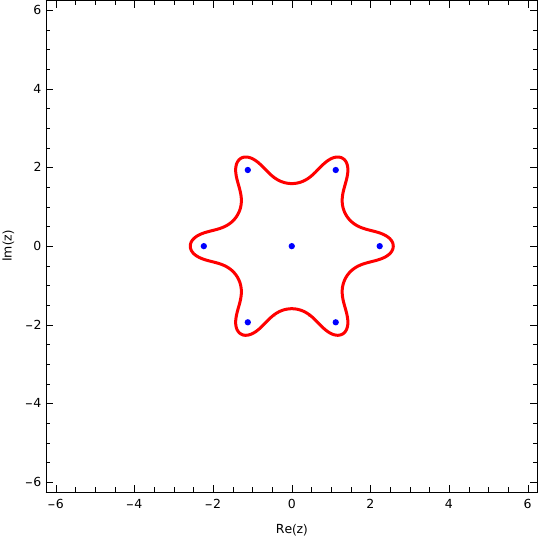}
		\caption{$a=i/50$}
	\end{subfigure}
	\caption{Plots of $Z_0$ (blue) and $Z_1$ (red) for $v=-\frac{i}{2}\overline{z} + az^{5}$ with different values of $a$.}
    \label{k=5}
\end{figure}

In particular, the figures suggest that for each $k\geq2$ there is a critical value $a_*>0$.
If $|a|=a_*$ then the set $Z_1$ is a single immersed closed curve.
For $|a|> a_*$ the curve $Z_1$ transforms into an embedded closed curve, whereas for $|a|<a_*$ it bifurcates into $k+2$ embedded closed curves, each of which encloses one point in $Z_0$.
The threshold is explicitly given by
\begin{equation}\label{eq:threshold_a_*}
    a_*=\frac{(k-1)^{k-1}}{(2k)^k},
\end{equation}
which can be computed by considering when $\partial_r|v|^2=\partial_\theta|v|^2=0$ holds at some point in $Z_1$.
As $|a|\to0$, the unique central closed curve converges to the circle $\{|z|=2\}$ and all the others escape to infinity, which agrees with the formal observation that $\bn^a$ converges to the standard skyrmion $\bh^1$.
As $|a|\to\infty$, the sets $Z_0$ and $Z_1$ concentrate on the origin, and elsewhere $\bn^a$ converges to the homogeneous state $\boe_3$.

\subsubsection{The case $k\leq-2$}
By \eqref{eq:v_formula} the set $Z_0$ is similarly computed as
\[
Z_0=\{\gamma e^{i\frac{2\pi j}{k+1}} \mid j=0,1,\dots,|k|-2\},
\]
with the same radius $\gamma=(2|a|)^{-\frac{1}{k-1}}e^{i\frac{\pi/2-\arg{a}}{k+1}}\in\C$ as before.
Here the origin corresponds to the northpole, thus not included in $Z_0$ as opposed to the case $k\geq2$.
The set $Z_1$ is given in the exactly same form \eqref{eq:Z_1}, which possesses $(|k|-1)$-fold rotational symmetry.
See Figure~\ref{k=-2} for $k=-2$, Figure~\ref{k=-3} for $k=-3$, Figure~\ref{k=-4} for $k=-4$, and Figure~\ref{k=-5} for $k=-5$.
In this case the threshold value $a_*$ can be computed as
\[
a_*=\frac{1}{2|k|}\left(\frac{k-1}{2k}\right)^{k-1}.
\]
Here, if $|a|=a_*$ then $Z_1$ consists of either a single immersed closed curve (for $k$ even) or two intersecting closed curves (for $k$ odd).
For $|a|<a_*$ the set $Z_1$ branches into two nested curves, while for $|a|>a_*$ it bifurcates into $|k|-1$ non-nested closed curves.
Notice that the above expression of $a_*$ agrees with \eqref{eq:threshold_a_*} if $k\geq2$.

\begin{figure}[htbp]
	\centering
	\begin{subfigure}{0.32\textwidth}
		\includegraphics[width=\linewidth]{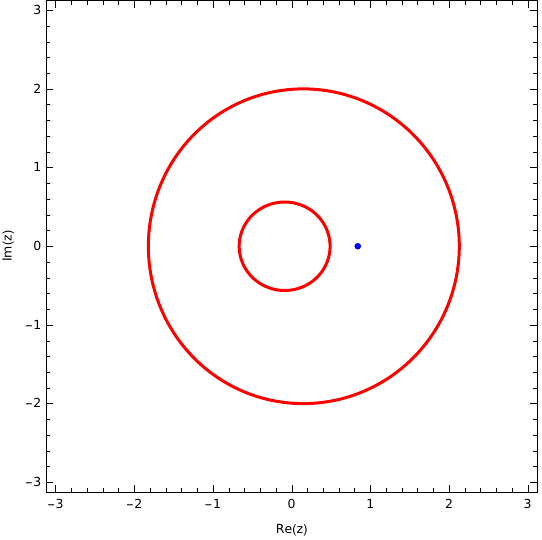}
		\caption{$a=8i/27$}
	\end{subfigure}
	\begin{subfigure}{0.32\textwidth}
		\includegraphics[width=\linewidth]{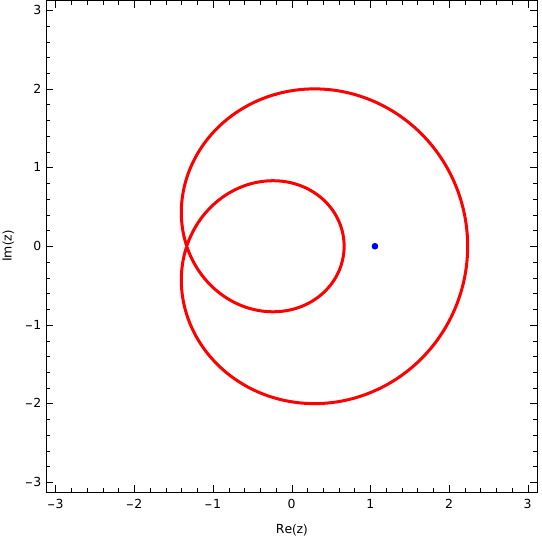}
		\caption{$a=16i/27$ ($=ia_*$)}
	\end{subfigure}
	\begin{subfigure}{0.32\textwidth}
		\includegraphics[width=\linewidth]{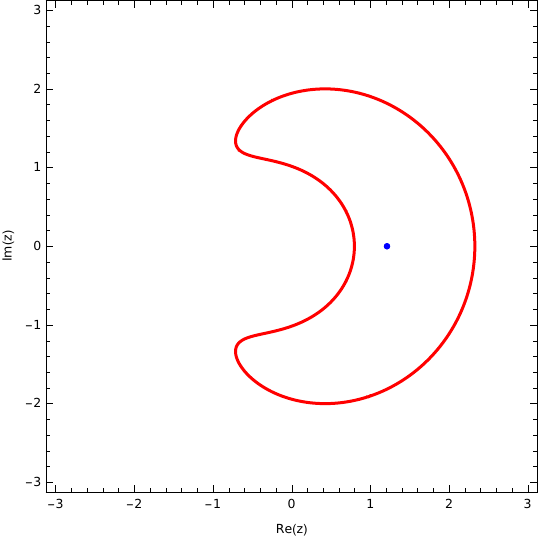}
		\caption{$a=8i/9$}
	\end{subfigure}
	\caption{Plots of $Z_0$ (blue) and $Z_1$ (red) for $v=-\frac{i}{2}\overline{z} + az^{-2}$ with different values of $a$.}
    \label{k=-2}
\end{figure}

\begin{figure}[htbp]
	\centering
	\begin{subfigure}{0.32\textwidth}
		\includegraphics[width=\linewidth]{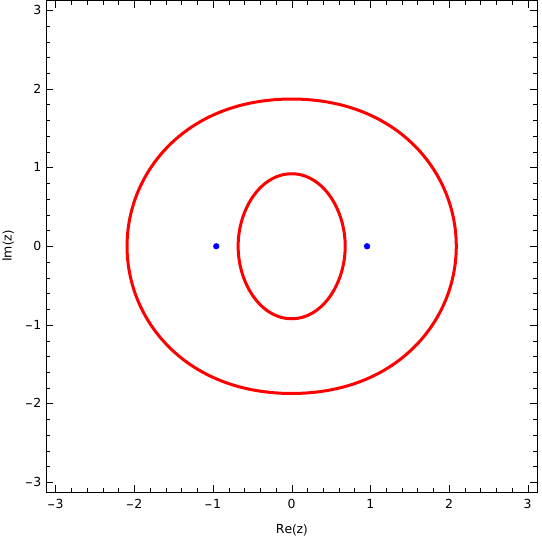}
		\caption{$a=27i/64$}
	\end{subfigure}
	\begin{subfigure}{0.32\textwidth}
		\includegraphics[width=\linewidth]{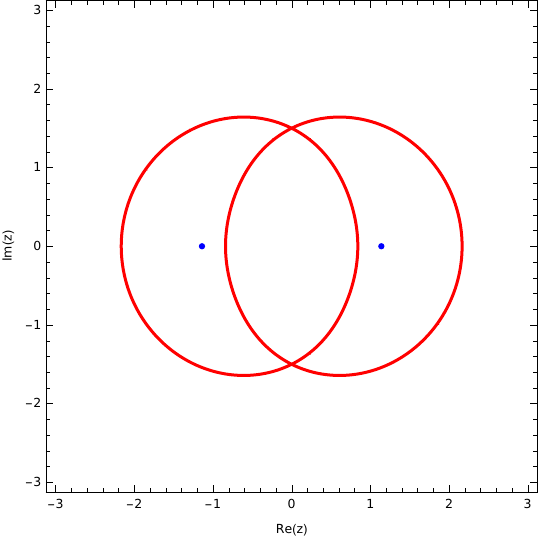}
		\caption{$a=27i/32$ ($=ia_*$)}
	\end{subfigure}
	\begin{subfigure}{0.32\textwidth}
		\includegraphics[width=\linewidth]{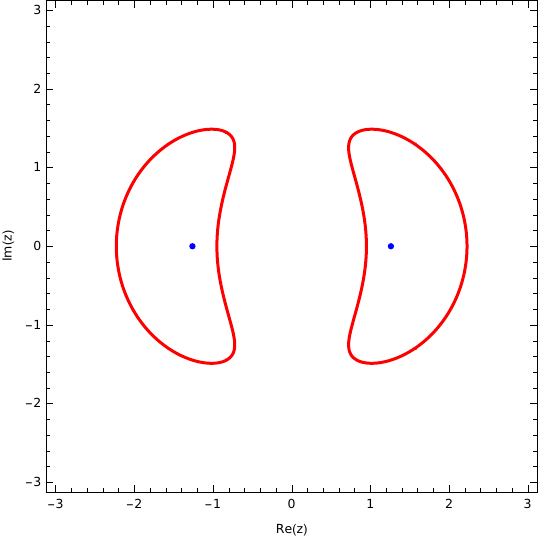}
		\caption{$a=81i/64$}
	\end{subfigure}
	\caption{Plots of $Z_0$ (blue) and $Z_1$ (red) for $v=-\frac{i}{2}\overline{z} + az^{-3}$ with different values of $a$.}
    \label{k=-3}
\end{figure}

\begin{figure}[htbp]
	\centering
	\begin{subfigure}{0.32\textwidth}
		\includegraphics[width=\linewidth]{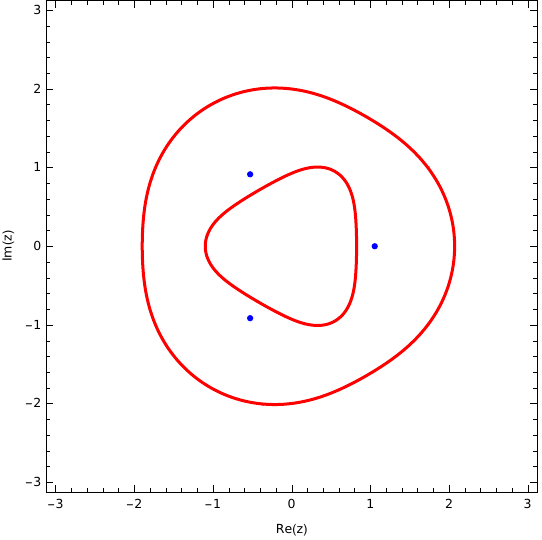}
		\caption{$a=2048i/3125$}
	\end{subfigure}
	\begin{subfigure}{0.32\textwidth}
		\includegraphics[width=\linewidth]{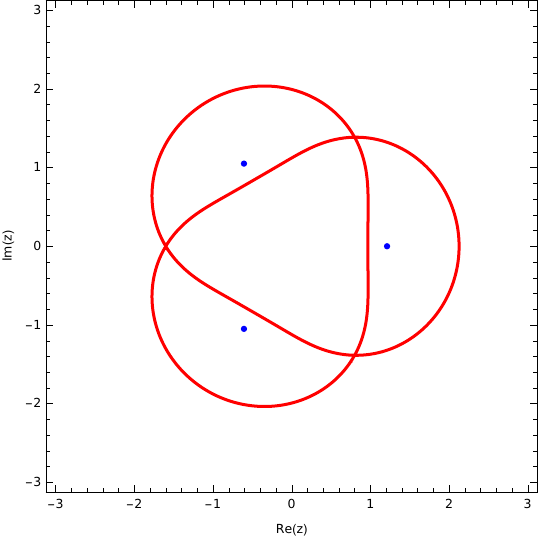}
		\caption{$a=4096i/3125$ ($=ia_*$)}
	\end{subfigure}
	\begin{subfigure}{0.32\textwidth}
		\includegraphics[width=\linewidth]{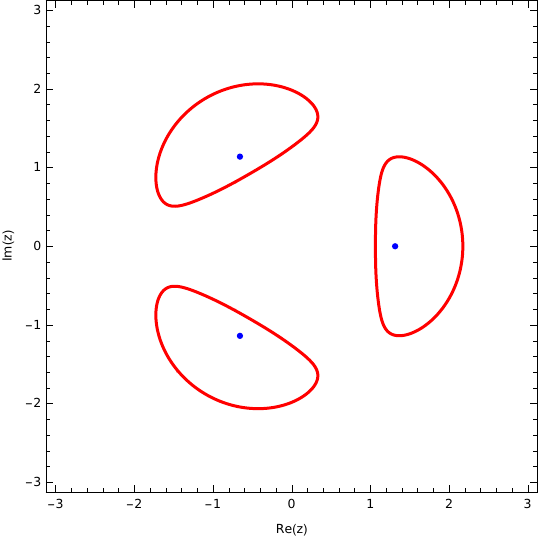}
		\caption{$a=6144i/3125$}
	\end{subfigure}
	\caption{Plots of $Z_0$ (blue) and $Z_1$ (red) for $v=-\frac{i}{2}\overline{z} + az^{-4}$ with different values of $a$.}
    \label{k=-4}
\end{figure}

\begin{figure}[htbp]
	\centering
	\begin{subfigure}{0.32\textwidth}
		\includegraphics[width=\linewidth]{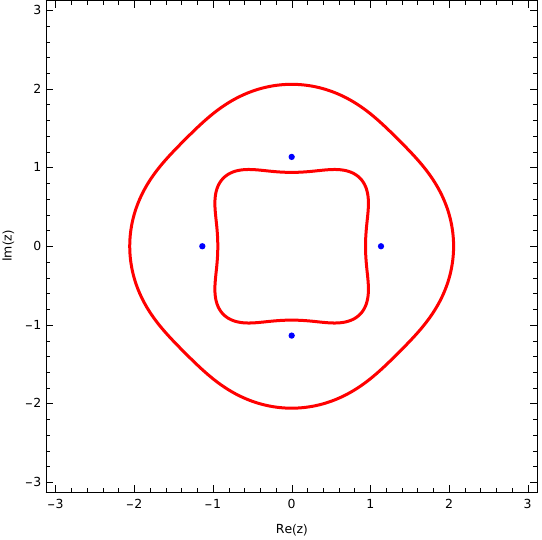}
		\caption{$a=3125i/2916$}
	\end{subfigure}
	\begin{subfigure}{0.32\textwidth}
		\includegraphics[width=\linewidth]{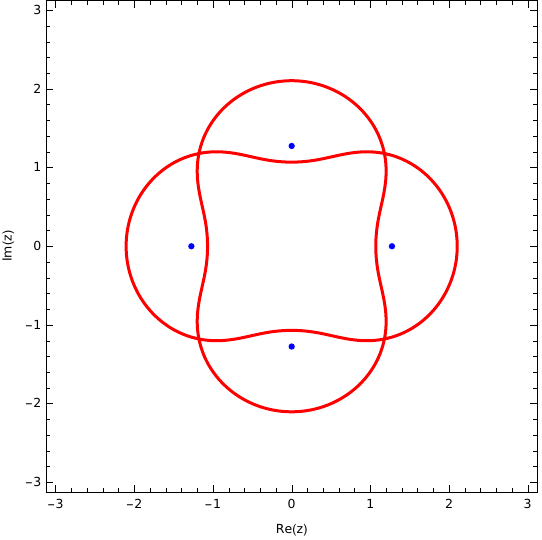}
		\caption{$a=3125i/1458$ ($=ia_*$)}
	\end{subfigure}
	\begin{subfigure}{0.32\textwidth}
		\includegraphics[width=\linewidth]{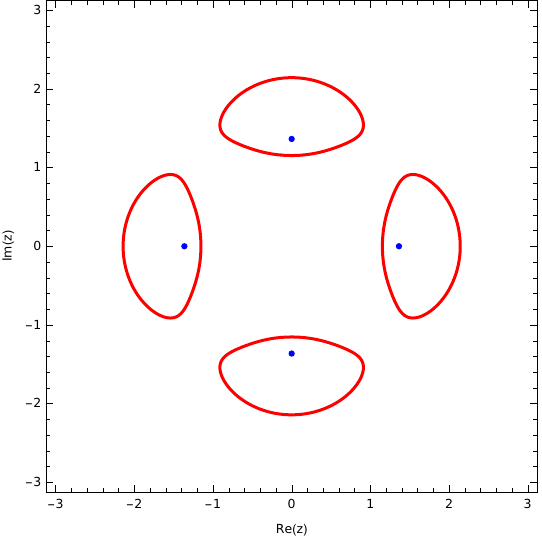}
		\caption{$a=46875i/972$}
	\end{subfigure}
	\caption{Plots of $Z_0$ (blue) and $Z_1$ (red) for $v=-\frac{i}{2}\overline{z} + az^{-5}$ with different values of $a$.}
    \label{k=-5}
\end{figure}

\FloatBarrier 
\bibliography{LL}

\end{document}